\def\rr{{\mathbb R}}
\def\rn{{{\rr}^n}}
\def\rnz{{\rr}^{n+1}_+}
\def\zz{{\mathbb Z}}
\def\nn{{\mathcal N}}
\def\ch{{\mathcal H}}
\def\cc{{\mathbb C}}
\def\cn{{\mathbb N}}
\def\cs{{\mathcal S}}
\def\cx{{\mathcal X}}
\def\cd{{\mathcal D}}
\def\cl{{\mathfrak{L}}}
\def\cm{{\mathcal M}}
\def\car{{\mathcal R}}
\def\ca{{\mathcal A}}
\def\nz{{\nabla}}
\def\fz{\infty}
\def\az{\alpha}
\def\supp{{\mathop\mathrm{\,supp\,}}}
\def\dist{{\mathop\mathrm{\,dist\,}}}
\def\loc{{\mathop\mathrm{\,loc\,}}}
\def\lz{\lambda}
\def\dz{\delta}
\def\ez{\epsilon}
\def\kz{\kappa}
\def\bz{\beta}
\def\ro{\rho}
\def\gz{{\gamma}}
\def\oz{{\omega}}
\def\vz{\varphi}
\def\sz{\sigma}
\def\pa{\partial}
\def\wz{\widetilde}
\def\hs{\hspace{0.3cm}}
\def\ls{\lesssim}
\def\bmo{{\mathop\mathrm{BMO}_{\ro,L}(\cx)}}
\def\bmoo{{\mathop\mathrm{BMO}^M_{\ro,L}(\cx)}}
\def\com{\complement}
\def\r{\right}
\def\lf{\left}
\def\lfr{\lfloor}
\def\rf{\rfloor}
\def\la{\langle}
\def\ra{\rangle}
\newtheorem{thm}{Theorem}[section]
\newtheorem{lem}{Lemma}[section]
\newtheorem{prop}{Proposition}[section]
\newtheorem{rem}{Remark}[section]
\newtheorem{cor}{Corollary}[section]
\newtheorem{defn}{Definition}[section]
\numberwithin{equation}{section}
\begin{document}

\arraycolsep=1pt
\title{{\vspace{-5cm}\small\hfill\bf Commun. Contemp. Math., to appear}\\
\vspace{4cm}\Large\bf Orlicz-Hardy Spaces Associated with
Operators Satisfying Davies-Gaffney Estimates
\footnotetext{\hspace{-0.35cm} 2000 {\it Mathematics
Subject Classification}. Primary 42B35; Secondary 42B30; 42B25.
\endgraf{\it Key words and phrases.} nonnegative self-adjoint operator,
Schr\"odinger operator, Riesz transform, Davies-Gaffney estimate,
Orlicz function, Orlicz-Hardy space, Lusin area function,
maximal function, atom, molecule, dual, BMO.
\endgraf
Dachun Yang is supported by the National
Natural Science Foundation (Grant No. 10871025) of China.
\endgraf $^\ast\,$Corresponding author.}}
\author{Renjin Jiang and Dachun Yang$\,^\ast$ }
\date{ }
\maketitle

\begin{center}
\begin{minipage}{13.5cm}\small
{\noindent{\bf Abstract.} Let ${\mathcal X}$ be a metric space with
doubling measure, $L$ a nonnegative self-adjoint operator in
$L^2({\mathcal X})$ satisfying the Davies-Gaffney estimate, $\omega$
a concave function on $(0,\infty)$ of strictly lower type
$p_\omega\in (0, 1]$ and $\rho(t)={t^{-1}}/\omega^{-1}(t^{-1})$ for
all $t\in (0,\infty).$ The authors introduce the Orlicz-Hardy space
$H_{\omega,L}({\mathcal X})$ via the Lusin area function associated
to the heat semigroup, and the BMO-type space
${\mathop\mathrm{BMO}_{\rho,L}(\mathcal X)}$.  The authors then
establish the duality between $H_{\omega,L}({\mathcal X})$ and
$\mathrm{BMO}_{\rho,L}({\mathcal X})$; as a corollary, the authors
obtain the $\rho$-Carleson measure characterization of the space
${\mathop\mathrm{BMO}_{\rho,L}(\mathcal X)}$. Characterizations of
$H_{\omega,L}({\mathcal X})$, including the atomic and molecular
characterizations and the Lusin area function characterization
associated to the Poisson semigroup, are also presented. Let
${\mathcal X}={\mathbb R}^n$ and $ L=-\Delta+V$ be a Schr\"odinger
operator, where $V\in L^1_{\mathrm{\,loc\,}}({\mathbb R}^n)$ is a
nonnegative potential. As applications, the authors show that the
Riesz transform $\nabla L^{-1/2}$ is bounded from
$H_{\omega,L}({{\mathbb R}^n})$ to $L(\omega)$; moreover, if there
exist $q_1,\,q_2\in (0,\infty)$ such that $q_1<1<q_2$ and
{\normalsize$[\omega(t^{q_2})]^{q_1}$} is a convex function on
$(0,\infty)$, then several characterizations of the Orlicz-Hardy
space $H_{\omega,L}({{\mathbb R}^n})$, in terms of the Lusin-area
functions, the non-tangential maximal functions, the radial maximal
functions, the atoms and the molecules, are obtained. All these
results are new even when $\omega(t)=t^p$ for all $t\in (0,\infty)$
and $p\in (0,1)$.}
\end{minipage}
\end{center}

\section{Introduction\label{s1}}

\hskip\parindent The theory of Hardy spaces $H^p$ in various
settings plays an important role in analysis and partial
differential equations. However, the classical theory of Hardy
spaces on $\rn$ is intimately connected with the Laplacian operator.
In recent years, the study of Hardy spaces and BMO spaces associated
with different operators inspired great interests; see, for example,
\cite{adm,amr,ar,dy1,dy2,dz1,dz2,hm1,hlmmy,jy,ya1} and their
references. In \cite{adm}, Auscher, Duong and McIntosh studied the
Hardy space $H_L^1(\rn)$ associated to an operator $L$ whose heat
kernel satisfies a pointwise Poisson upper bound. Later, in
\cite{dy1,dy2}, Duong and Yan introduced the BMO-type space
$\mathrm{BMO}_L(\rn)$ associated to such an $L$ and established the
duality between $H_L^1(\rn)$ and $\mathrm{BMO}_{L^\ast}(\rn)$, where
$L^\ast$ denotes the adjoint operator of $L$ in $L^2(\rn)$. Yan
\cite{ya1} further generalized these results to the Hardy space
$H^p_L(\rn)$ with $p\in(0, 1]$ close to 1 and its dual space. Very
recently, Auscher, McIntosh and Russ \cite{amr} treated the Hardy
space $H^1$ associated to the Hodge Laplacian on a Riemannian
manifold with doubling measure; Hofmann and Mayboroda \cite{hm1}
introduced the Hardy space $H^1_L(\rn)$ and its dual space adapted
to a second order divergence form elliptic operator $L$ on $\rn$
with complex coefficients. Notice that these operators may not have
the pointwise heat kernel bounds. Furthermore, Hofmann et al
\cite{hlmmy} studied the Hardy space $H^1_L(\cx)$ on a metric
measured space $\cx$ adapted to $L$, which is nonnegative
self-adjoint, and satisfies the so-called Davies-Gaffney estimate.

On the other hand, as another generalization of $L^p(\rn)$, the
Orlicz space was introduced by Birnbaum-Orlicz in \cite{bo} and
Orlicz in \cite{o32}. Since then, the theory of the Orlicz spaces
themselves has been well developed and the spaces have been widely
used in probability, statistics, potential theory, partial
differential equations, as well as harmonic analysis and some other
fields of analysis; see, for example, \cite{rr91,rr00}. Moreover,
the Orlicz-Hardy spaces are also good substitutes of the Orlicz
spaces in dealing with many problems of analysis. In particular,
Str\"omberg \cite{s79}, Janson \cite{ja} and Viviani \cite{v}
studied Orlicz-Hardy spaces and their dual spaces.

Recall that the Orlicz-Hardy spaces associated operators on $\rn$
have been studied in \cite{jyz, jy}. In \cite{jyz}, the heat kernel
is assumed to enjoy a pointwise Poisson type upper bound; while in
\cite{jy}, $L$ is a second order divergence form elliptic operator
on $\rn$ with complex coefficients. Motivated by
\cite{hm1,hlmmy,ja,v}, in this paper, we study the Orlicz-Hardy
space $H_{\oz,L}(\cx)$ and its dual space associated with a
nonnegative self-adjoint operator $L$ on a metric measured space
$\cx$.

Let $\cx$ be a metric space with doubling measure and $L$ a
nonnegative self-adjoint operator in $L^2(\cx)$ satisfying the
Davies-Gaffney estimate. Let $\oz$ on $(0,\fz)$ be a concave
function of strictly lower type $p_\oz\in (0, 1]$ and
$\rho(t)={t^{-1}}/\oz^{-1}(t^{-1})$ for all $t\in (0,\fz).$ A
typical example of such Orlicz functions is $\oz(t)=t^p$ for all
$t\in (0,\fz)$ and $p\in (0,1]$. To develop a real-variable theory
of the Orlicz-Hardy space $H_{\oz,L}(\cx)$, the key step is to
establish an atomic (molecular) characterization of these spaces. To
this end, we inherit a method used in \cite{amr,jy}. We first
establish the atomic decomposition of the tent space $T_{\oz}(\cx)$,
whose proof implies that if $F\in T_{\oz}(\cx)\cap T_2^2(\cx)$, then
the atomic decomposition of $F$ holds in both $T_{\oz}(\cx)$ and
$T_2^2(\cx)$. Then by the fact that the operator $\pi_{\Psi,L}$ (see
\eqref{4.6}) is bounded from $T_2^2(\cx)$ to $L^2(\cx)$, we further
obtain the $L^2(\cx)$-convergence of the corresponding atomic
decomposition for functions in $H_{\oz,L}(\cx)\cap L^2(\cx)$, since
for all $f\in H_{\oz,L}(\cx)\cap L^2(\cx)$, $t^2Le^{-t^2L}f\in
T_2^2(\cx)\cap T_{\oz}(\cx)$. This technique plays a fundamental
role in the whole paper.

With the help of the atomic decomposition, we establish the dual
relation between the spaces $H_{\oz,L}(\cx)$ and
$\mathrm{BMO}_{\ro,L}(\cx)$. As a corollary, we obtain the
$\ro$-Carleson measure characterization of the space $\bmo$. Having
at hand the duality relation, we then obtain the atomic and
molecular characterizations of the space $H_{\oz,L}(\cx)$. We also
introduce the Orlicz-Hardy space $H_{\oz,\cs_P}(\cx)$ via the Lusin
area function associated to the Poisson semigroup. With the atomic
characterization of $H_{\oz,L}(\cx)$, we finally show that the
spaces $H_{\oz,\cs_P}(\cx)$ and $H_{\oz,L}(\cx)$ coincide with
equivalent norms. Let $\cx=\rn$ and $L=-\Delta+V$, where $V\in
L^1_\loc(\rn)$ is a nonnegative potential.  As applications, we show
that the Riesz transform $\nabla L^{-1/2}$ is bounded from
$H_{\oz,L}(\rn)$ to $L(\oz)$; moreover, if there exist $q_1,\,q_2\in
(0,\fz)$ such that $q_1<1<q_2$ and $[\oz(t^{q_2})]^{q_1}$ is a
convex function on $(0,\fz)$, then we obtain several
characterizations of $H_{\oz,L}(\rn)$, in terms of the Lusin-area
functions, the non-tangential maximal functions, the radial maximal
functions, the atoms and the molecules. Notice that here, the
potential $V$ is not assumed to satisfy the reverse H\"older
inequality.

Notice that the assumption that $L$ is nonnegative self-adjoint
enables us to obtain an atomic characterization of $H_{\oz,L}(\cx)$.
The method used in the proof of atomic characterization depends on the
finite speed propagation property for solutions of the corresponding
wave equation of $L$ and hence the self-adjointness of $L$. Without
self-adjointness, as in \cite{adm,dy2,hm1,jyz,jy,ya1}, where $L$
satisfies $H_\fz$-functional calculus and the heat kernel
generated by $L$ satisfies a pointwise Poisson type upper
bound or the Davies-Gaffney estimate, a corresponding
(Orlicz-)Hardy space theory
with the molecular ({\it not} atomic) characterization was also
established in \cite{adm,dy2,hm1,jyz,jy,ya1}.

Precisely, this paper is organized as follows. In Section 2, we
first recall some definitions and notation concerning metric
measured spaces $\cx$, then describe some basic assumptions on the
operator $L$ and the Orlicz function $\oz$ and present some
properties of the operator $L$ and Orlicz functions considered in
this paper.

In Section 3, we first recall some notions about tent spaces and then
study the tent space $T_\oz(\cx)$ associated to the Orlicz
function $\oz$. The main result of this section is that we characterize the
tent space $T_\oz(\cx)$ by the atoms; see Theorem \ref{t3.1} below.
As a byproduct, we see that if $f\in T_\oz(\cx)\cap T_2^2(\cx)$, then the atomic
decomposition holds in both $T_\oz(\cx)$ and $T_2^2(\cx)$, which plays an important
role in the remaining part of this paper; see Corollary \ref{c3.1} below.

In Section 4,  we first introduce the Orlicz-Hardy space
$H_{\oz,L}(\cx)$ and prove that the operator $\pi_{\Psi,L}$ (see
\eqref{4.6} below) maps the tent space $T_\oz(\cx)$ continuously
into $H_{\oz,L}(\cx)$ (see Proposition \ref{p4.2} below). By this
and the atomic decomposition of $T_\oz(\cx)$, we obtain that for
each $f\in H_{\oz,L}(\cx)$, there is an atomic decomposition of $f$
holding in $H_{\oz,L}(\cx)$ (see Proposition \ref{p4.3} below). We
should point out that to obtain the atomic decomposition of
$H_{\oz,L}(\cx)$, we borrow a key idea from \cite{hlmmy}, namely,
for a nonnegative self-adjoint operator $L$ in $L^2(\cx)$, then $L$
satisfies the Davies-Gaffney estimate if and only if it has the
finite speed propagation property; see \cite{hlmmy} (or Lemma
\ref{l2.2} below). Via this atomic decomposition of
$H_{\oz,L}(\cx)$, we further obtain the duality between
$H_{\oz,L}(\cx)$ and $\bmo$ (see Theorem \ref{t4.1} below). As an
application of this duality, we establish a $\ro$-Carleson measure
characterization of the space $\bmo$; see Theorem \ref{t4.2} below.
We point out that if $\cx=\rn$, $L=-\Delta\equiv -\sum_{i=1}^n
\frac{\pa^2}{\pa x_i^2}$ and $\oz$ is as above with $p_\oz \in
(n/(n+1),1]$, then the Orlicz-Hardy space $H_{\oz,L}(\rn)$ in this
case coincides with the Orlicz-Hardy space in \cite{jyz} and it was
proved there that $H_{\oz,L}(\rn)=H_\oz(\rn)$;  see \cite{ja,v} for
the definition of $H_\oz(\rn)$.

In Section 5, by Proposition \ref{p4.3} and Theorem \ref{t4.1}, we
establish the equivalence of $H_{\oz,L}(\cx)$ and the atomic (resp.
molecular) Orlicz-Hardy $H^{M}_{\oz,{\rm at}}(\cx)$ (resp.
$H^{M,\ez}_{\oz,{\rm mol}}(\cx)$); see Theorem \ref{t5.1} below. We
notice that the series in $H^{M}_{\oz,{\rm at}}(\cx)$ (resp.
$H^{M,\ez}_{\oz,{\rm mol}}(\cx)$) is defined to converge in the norm
of $(\bmo)^\ast$; while in Corollary \ref{c4.1} below, the atomic
decomposition holds in $H_{\oz,L}(\cx)$. Applying the atomic
characterization, we further characterize the Orlicz-Hardy space
$H_{\oz,L}(\cx)$ in terms of the Lusin area function associated to
the Poisson semigroup; see Theorem \ref{5.2} below.

As applications, in Section 6, we study the Hardy spaces
$H_{\oz,L}(\rn)$ associated to the Schr\"odinger operator
$L=-\Delta+V$, where $V\in L^1_\loc(\rn)$ is a nonnegative
potential. We characterize $H_{\oz,L}(\rn)$ in terms of the
Lusin-area functions, the atoms and the molecules; see Theorem
\ref{t6.1} below. Moreover, we show that the Riesz transform $\nabla
L^{-1/2}$ is bounded from $H_{\oz,L}(\rn)$ to $L(\oz)$ and from
$H_{\oz,L}(\rn)$ to the classical Orlicz-Hardy space $H_{\oz}(\rn)$,
if $p_\oz\in (\frac n{n+1},1]$; see Theorems \ref{t6.2} and
\ref{t6.3} below. If there exist $q_1,\,q_2\in (0,\fz)$ such that
$q_1<1<q_2$ and $[\oz(t^{q_2})]^{q_1}$ is a convex function on $
(0,\fz)$, then we obtain several characterizations of
$H_{\oz,L}(\rn)$, in terms of the non-tangential maximal functions
and the radial maximal functions; see Theorem \ref{t6.4} below.
Denote $H_{\oz,L}(\rn)$ by $H_L^p(\rn)$, when $p\in (0,1]$ and
$\oz(t)=t^p$ for all $t\in (0,\fz)$. We remark that the boundedness
of $\nabla L^{-1/2}$ from $H_L^1({{\mathbb R}^n})$ to the classical
Hardy space $H^1({{\mathbb R}^n})$ was established in \cite{hlmmy}.
Moreover, if $n=1$ and $p=1$, the Hardy space $H_L^1(\rn)$ coincides
with the Hardy space introduced by Czaja and Zienkiewicz in
\cite{cz}; if $L=-\Delta+V$ and $V$ belongs to the reverse H\"older
class $\ch_q(\rn)$ for some $q\ge n/2$ with $n\ge 3$, then the Hardy
space $H_L^p(\rn)$ when $p\in (n/(n+1),1]$ coincides with the Hardy
space introduced by Dziuba\'nski and Zienkiewicz \cite{dz1,dz2}.

Finally, we make some conventions. Throughout the paper, we denote
by $C$ a positive constant which is independent of the main
parameters, but it may vary from line to line. The symbol $X \ls Y$
means that there exists a positive constant $C$ such that $X \le
CY$; the symbol $\lfr\,\az\,\rf$ for $\az\in\rr$ denotes the maximal
integer no greater than $\az$; $B(z_B,\,r_B)$ denotes an open ball
with center $z_B$ and radius $r_B$ and $CB(z_B,\,r_B)\equiv
B(z_B,\,Cr_B).$ Set $\cn\equiv\{1,2,\cdots\}$ and
$\zz_+\equiv\cn\cup\{0\}.$ For any subset $E$ of $\cx$, we denote by
$E^\com$ the set $\cx\setminus E.$ We also use $C(\gz,\bz, \cdots)$
to denote a positive constant depending on the indicated parameters
$\gz,\bz,\cdots$.

\section{Preliminaries\label{s2}}

\hskip\parindent In this section, we first recall some notions and
notation on metric measured spaces and then describe some basic
assumptions on the operator $L$ studied in this paper; finally we
present some basic properties on Orlicz functions and also describe
some basic assumptions of them.

\subsection{Metric measured spaces\label{s2.1}}

\hskip\parindent Throughout the whole paper,
we let $\cx$ be a set, $d$ a metric on
$\cx$ and $\mu$ a nonnegative Borel regular measure
on $\cx$. Moreover, we assume that
there exists a constant $C_1\ge1$ such that for all
$x\in\cx$ and $r>0$,
\begin{equation}\label{2.1}
V(x,2r)\le C_1V(x,r)<\fz,
\end{equation}
where $B(x,r)\equiv \{y\in \cx:\,d(x,y)<r\}$ and
\begin{equation}\label{2.2}
V(x,r)\equiv \mu(B(x,r)).
\end{equation}

Observe that if $d$ is a quasi-metric, then $(\cx,d,\mu)$
is called a space of homogeneous type in
the sense of Coifman and Weiss \cite{cw}.

Notice that the doubling property \eqref{2.1} implies the following strong
homogeneity property that
\begin{equation}\label{2.3}
  V(x,\lz r)\le C\lz^n V(x,r)
\end{equation}
for some positive constants $C$ and $n$ uniformly
for all $\lz\ge1$, $x\in\cx$ and $r>0$.
The parameter $n$ measures the dimension of the space $\cx$
in some sense. There also exist constants $C>0$ and $0\le N\le n$ such that
\begin{equation}\label{2.4}
  V(x,r)\le C\lf(1+\frac{d(x,y)}{r}\r)^NV(y,r)
\end{equation}
uniformly for all $x,\, y\in\cx$ and $r>0$. Indeed, the property
\eqref{2.4} with $N=n$ is a simple corollary of the strong
homogeneity property (2.3). In the cases of Euclidean spaces, Lie
groups of polynomial growth and more generally in Ahlfors regular
spaces, $N$ can be chosen to be $0$.

In what follows, for each ball $B\subset \cx$, we set
\begin{equation}\label{2.5}
U_0(B)\equiv B \ \mathrm{and } \ U_j(B)\equiv 2^jB\backslash 2^{j-1}B \
\mathrm {for}\ j\in\cn.
\end{equation}

\subsection{Assumptions on operators $L$}

\hskip\parindent Throughout the whole paper, as in \cite{hlmmy},
we always suppose that the considered operators $L$ satisfy the
following assumptions.

\begin{proof}[\bf Assumption (A)]\rm  The operator $L$ is
a nonnegative self-adjoint
operator in $L^2(\cx)$.
\end{proof}

\begin{proof}[\bf Assumption (B)]\rm
The semigroup $\{e^{-tL}\}_{t>0}$ generated by $L$ is analytic on
$L^2(\cx)$ and satisfies the Davies-Gaffney estimates, namely, there
exist positive constants $C_2$ and $C_3$ such that for all closed
sets $E$ and $F$ in $\cx$, $t\in (0,\fz)$ and $f\in L^2(E)$,
\begin{equation}\label{2.6}
\|e^{-tL}f\|_{L^2(F)}\le C_2 \exp\bigg\{-\frac{\dist(E,F)^2}{C_3t}\bigg\}
\|f\|_{L^2(E)},
\end{equation}
where and in what follows, $\dist(E,F)\equiv \inf_{x\in E,\,y\in F}d(x,y)$
and $L^2(E)$ is the set of all $\mu$-measurable functions
on $E$ such that $\|f\|_{L^2(E)}=\{\int_E|f(x)|^2\,d\mu(x)\}^{1/2}<\fz$.
\end{proof}

Examples of operators satisfying Assumptions (A) and (B) include
second order elliptic self-adjoint operators in divergence form on
$\rn$, degenerate Schr\"odinger operators with nonnegative
potential, Schr\"odinger operators with nonnegative potential and
magnetic field and Laplace-Beltrami operators on all complete
Riemannian manifolds; see for example, \cite{da,ga,sh,si}.

By Assumptions (A) and (B), we have the following
results which were established in \cite{hlmmy}.

\begin{lem}\label{l2.1}
Let $L$ satisfy Assumptions (A) and (B). Then for any fixed
$k\in\zz_+$ (resp. $j,\,k\in\zz_+$ with $j\le k$), the family
$\{(t^2L)^ke^{-t^2L}\}_{t>0}$ (resp.
$\{(t^2L)^j(I+t^2L)^{-k}\}_{t>0}$) of operators satisfies the
Davies-Gaffney estimates \eqref{2.6} with positive constants
$C_2,\,C_3$ depending on $n,\,k$ (resp. $n,\,j,\,k$) only.
\end{lem}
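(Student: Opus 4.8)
The plan is to deduce both assertions from Assumption (B) by functional calculus, the common mechanism being that Davies--Gaffney estimates of the form \eqref{2.6} are stable under composition: if two families $\{S_t\}_{t>0}$ and $\{T_t\}_{t>0}$ of operators on $L^2(\cx)$ each satisfy an estimate like \eqref{2.6} with the scaled time $t^2$ (that is, with $\exp\{-\dist(E,F)^2/(C_3t^2)\}$ in place of $\exp\{-\dist(E,F)^2/(C_3t)\}$), then so does $\{S_tT_t\}_{t>0}$, with a possibly larger constant $C_3$. To see this, fix closed sets $E,\,F\subset\cx$ with $D\equiv\dist(E,F)>0$ and $f\in L^2(E)$, put $G\equiv\{x\in\cx:\,\dist(x,E)\le D/2\}$, and split $S_tT_tf=S_t[(T_tf)\chi_G]+S_t[(T_tf)\chi_{G^\com}]$. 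Since $\dist(G,F)\ge D/2$, the Davies--Gaffney estimate for $S_t$ controls the first term; since $\dist(E,G^\com)\ge D/2$ and $S_t$ is bounded on $L^2(\cx)$ (which follows from \eqref{2.6} itself), the Davies--Gaffney estimate for $T_t$ controls the second. Iterating, any finite composition of such families is again of this type, with constants depending only on the number of factors and on the individual constants.

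For the family $\{(t^2L)^ke^{-t^2L}\}_{t>0}$ I would use that, $L$ being nonnegative self-adjoint, $\{e^{-zL}\}$ extends by the spectral theorem to a uniformly bounded family for $\mathrm{Re}\,z>0$ which, combined with \eqref{2.6}, obeys the complex-time off-diagonal bound $\|e^{-zL}f\|_{L^2(F)}\ls\exp\{-c\,\mathrm{Re}(1/z)\,\dist(E,F)^2\}\|f\|_{L^2(E)}$ with an absolute $c>0$. Representing $z^ke^{-z}$ by Cauchy's formula,
$$z^ke^{-z}=\frac{(-1)^kk!}{2\pi i}\oint_{|\sigma-1|=1/2}\frac{e^{-\sigma z}}{(\sigma-1)^{k+1}}\,d\sigma,$$
one gets $(t^2L)^ke^{-t^2L}=\frac{(-1)^kk!}{2\pi i}\oint_{|\sigma-1|=1/2}(\sigma-1)^{-k-1}e^{-\sigma t^2L}\,d\sigma$; on this contour $\mathrm{Re}\,\sigma\ge1/2$ and $|\sigma|\le3/2$, so $\mathrm{Re}(1/\sigma)\gs 1$, and applying the complex-time bound (with $z=\sigma t^2$) under the integral and estimating the contour integral crudely yields \eqref{2.6} (with the time parameter $t^2$) for this family, with constants depending only on $n$ and $k$.

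For the family $\{(t^2L)^j(I+t^2L)^{-k}\}_{t>0}$ with $j\le k$ I would pass to the semigroup via the subordination identity $(1+x)^{-k}=\frac1{\Gamma(k)}\int_0^\fz\lz^{k-1}e^{-\lz}e^{-\lz x}\,d\lz$ with $x=t^2L$, which gives
$$(t^2L)^j(I+t^2L)^{-k}=\frac1{\Gamma(k)}\int_0^\fz\lz^{k-1-j}e^{-\lz}\,(\lz t^2L)^je^{-\lz t^2L}\,d\lz.$$
Applying the estimate just obtained at scale $\lz^{1/2}t$ to the integrand (here $j\le k$ guarantees $k-1-j\ge-1$, so the $\lz$-integral converges at $0$ because of the off-diagonal exponential factor and at $\fz$ because of $e^{-\lz}$) and carrying out the resulting elementary $\lz$-integral yields the desired Davies--Gaffney-type off-diagonal estimate for this family, with constants depending only on $n,\,j,\,k$.

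The genuinely technical point is the upgrade used in the second paragraph: promoting the real-time estimate \eqref{2.6} to the complex-time estimate on which the Cauchy representation rests, while keeping every implied constant dependent only on $n$ (and on $k$, resp. $j,k$); this is where the self-adjointness of $L$ is essential. Once that is in hand, the remainder is contour estimation, the composition lemma, and elementary one-variable integration.
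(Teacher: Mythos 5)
The paper itself does not prove Lemma \ref{l2.1}; it simply quotes it from \cite{hlmmy}, so there is no internal argument to compare with. Your route is the standard one in that literature, and for the family $\{(t^2L)^ke^{-t^2L}\}_{t>0}$ it works: the composition lemma is correct, the Cauchy representation of $z^ke^{-z}$ over $|\sigma-1|=1/2$ is correct, and on that contour $\mathrm{Re}(1/\sigma)\ge 2/9$, so the complex-time bound does give the Gaussian estimate with constants depending only on $k$ and the constants of Assumption (B). The caveat is the one you yourself flag: the complex-time Davies--Gaffney bound $\|e^{-zL}f\|_{L^2(F)}\ls\exp\{-c\,\mathrm{Re}(1/z)\,\dist(E,F)^2\}\|f\|_{L^2(E)}$ is itself a theorem (a Phragm\'en--Lindel\"of argument for the analytic family $z\mapsto e^{-zL}$, self-adjointness supplying uniform boundedness in the right half-plane), and your proposal assumes it rather than proves it; as a self-contained proof, the central step is missing, even though the ingredient is available in the literature.

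The genuine gap is in the resolvent part. After subordination you must estimate $\int_0^\fz \lz^{k-1-j}e^{-\lz}\exp\{-\dist(E,F)^2/(C\lz t^2)\}\,d\lz$. Setting $a\equiv\dist(E,F)^2/(Ct^2)$ and minimizing $\lz+a/\lz$ shows this integral is of size $e^{-2\sqrt a}$ up to harmless powers of $a$; that is, it yields only the exponential decay $\exp\{-c\,\dist(E,F)/t\}$, not the Gaussian decay $\exp\{-\dist(E,F)^2/(C_3t^2)\}$ that a literal reading of \eqref{2.6} (at time $t^2$) demands. No sharpening can repair this, because the Gaussian form is false for resolvents: already for $L=-\Delta$ on $\rn$ the kernel of $(I+t^2L)^{-1}$ is a Bessel potential decaying like $e^{-|x-y|/t}$, so its $L^2(E)\to L^2(F)$ norm decays only exponentially in $\dist(E,F)/t$. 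Thus your closing claim that the $\lz$-integral ``yields the desired Davies--Gaffney-type off-diagonal estimate'' overstates what you obtain: what comes out is the exponential form, which is in fact all the paper ever uses for this family (cf. the factor $\exp\{-\dist(B,U_j(B))/r_B\}$ in the proof of Theorem \ref{t4.1}(ii)), but it is strictly weaker than the statement as written, and the discrepancy should be acknowledged and resolved by proving (and invoking) the resolvent half of the lemma with exponential decay. A minor further point: when $j=k$ and $\dist(E,F)=0$ the subordination integral diverges at $\lz=0$, so uniform $L^2$-boundedness should be taken directly from the spectral theorem, via $\sup_{x\ge0}x^j(1+x)^{-k}\le1$, rather than from the integral representation.
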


In what follows, for any operator $T$, let $K_T$ denote its integral
kernel when this kernel exists. By \cite[Proposition 3.4]{hlmmy}, we
know that if $L$ satisfies Assumptions (A) and (B), and
$T=\cos(t\sqrt L)$, then there exists a positive constant $C_4$ such
that
\begin{equation}\label{2.7}
\supp K_T\subset \cd_t\equiv \lf\{(x,y)\in \cx\times\cx:\,d(x,y)\le C_4t\r\}.
\end{equation}
This observation plays a key role in obtaining the atomic
characterization of the Orlicz-Hardy space $H_{\oz, L}(\cx)$; see
\cite{hlmmy} and Proposition \ref{p4.3} below.

\begin{lem}\label{l2.2}
Suppose that the operator $L$ satisfies Assumptions (A) and (B). Let
$\vz\in C_0^\fz(\rr)$ be even and $\supp \vz\subset (-C_4^{-1},C_4^{-1})$, where
$C_4$ is as in \eqref{2.7}. Let $\Phi$ denote the Fourier transform of $\vz$. Then
for every $\kz\in\zz_+$ and $t>0$, the kernel $K_{(t^2L)^\kz \Phi(t\sqrt L)}$ of
$(t^2L)^\kz \Phi(t\sqrt L)$ satisfies that
$\supp K_{(t^2L)^\kz \Phi(t\sqrt L)}\subset \lf\{(x,y)\in
\cx\times\cx:\,d(x,y)\le t\r\}.$
\end{lem}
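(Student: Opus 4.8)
The plan is to derive the support condition for the kernel of $(t^2L)^\kz\Phi(t\sqrt L)$ from the finite speed propagation property \eqref{2.7} of $\cos(s\sqrt L)$, via the Fourier inversion formula. First I would write $\Phi(t\sqrt L)$ as an average of the wave operators $\cos(s\sqrt L)$. Since $\vz\in C_0^\fz(\rr)$ is even with $\supp\vz\subset(-C_4^{-1},C_4^{-1})$, and $\Phi$ is its Fourier transform, Fourier inversion together with the evenness of $\vz$ gives, for a nonnegative self-adjoint operator, an identity of the form
\begin{equation*}
\Phi(t\sqrt L)=c\int_\rr \vz(s)\,\cos(ts\sqrt L)\,ds,
\end{equation*}
where the integral is understood in the strong (or weak) operator-topology sense and is justified by the spectral theorem applied to the bounded, rapidly decaying function $\Phi$. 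To handle the factor $(t^2L)^\kz$, I would differentiate under the integral sign: since $\pa_s^2\cos(ts\sqrt L)=-t^2L\cos(ts\sqrt L)$, repeated integration by parts in $s$ (the boundary terms vanish because $\vz$ has compact support) yields
\begin{equation*}
(t^2L)^\kz\Phi(t\sqrt L)=c(-1)^\kz\int_\rr \vz^{(2\kz)}(s)\,\cos(ts\sqrt L)\,ds.
\end{equation*}
Here $\vz^{(2\kz)}\in C_0^\fz(\rr)$ still has support contained in $(-C_4^{-1},C_4^{-1})$.

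Next I would pass to integral kernels. By \eqref{2.7}, for each fixed $s$ the kernel $K_{\cos(ts\sqrt L)}$ is supported in $\{(x,y):d(x,y)\le C_4|t||s|\}$. Representing the kernel of the left-hand side as
\begin{equation*}
K_{(t^2L)^\kz\Phi(t\sqrt L)}(x,y)=c(-1)^\kz\int_\rr \vz^{(2\kz)}(s)\,K_{\cos(ts\sqrt L)}(x,y)\,ds,
\end{equation*}
and noting that on the support of $\vz^{(2\kz)}$ we have $|s|<C_4^{-1}$, hence $C_4|t||s|<|t|=t$ (for $t>0$), the integrand vanishes whenever $d(x,y)\ge t$. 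Therefore $K_{(t^2L)^\kz\Phi(t\sqrt L)}(x,y)=0$ for $d(x,y)\ge t$, which is the desired support condition $\supp K_{(t^2L)^\kz\Phi(t\sqrt L)}\subset\{(x,y)\in\cx\times\cx:d(x,y)\le t\}$.

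The main obstacle is making the kernel identities rigorous: a priori $(t^2L)^\kz\Phi(t\sqrt L)$ need not be an integral operator in an obvious way, and one must justify interchanging the operator-valued integral with the passage to kernels. I would address this by testing against $f,g\in L^2(\cx)$ with disjoint compactly supported data: writing $\la (t^2L)^\kz\Phi(t\sqrt L)f,g\ra=c(-1)^\kz\int_\rr\vz^{(2\kz)}(s)\la\cos(ts\sqrt L)f,g\ra\,ds$ (justified by the spectral theorem and dominated convergence, since $\|\cos(ts\sqrt L)\|_{L^2\to L^2}\le1$ and $\vz^{(2\kz)}$ is integrable), and invoking \eqref{2.7} to see that $\la\cos(ts\sqrt L)f,g\ra=0$ once $C_4|t||s|<\dist(\supp f,\supp g)$; choosing the supports at distance $\ge t$ apart forces the whole pairing to vanish. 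This shows $(t^2L)^\kz\Phi(t\sqrt L)$ maps $L^2(E)$ into $L^2(\{x:d(x,E)<t\})$ for every $E$, which is exactly the stated support property of the kernel. The only other minor point is the differentiation-under-the-integral step, which is justified because $\vz$ and all its derivatives are smooth with compact support, so the integrals converge absolutely in operator norm after each integration by parts.
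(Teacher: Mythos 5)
Your argument is correct and is essentially the standard proof of this lemma: the paper itself gives no proof but cites \cite{hlmmy}, where the result is obtained in exactly this way, namely by writing $\Phi(t\sqrt L)=c\int_\rr\vz(s)\cos(ts\sqrt L)\,ds$ via the spectral theorem and the evenness of $\vz$, converting $(t^2L)^\kz$ into $(-1)^\kz\vz^{(2\kz)}(s)$ by integration by parts (or, equivalently, by the corresponding scalar identity applied spectrally), and then invoking the finite speed propagation property \eqref{2.7} together with $|s|<C_4^{-1}$ on $\supp\vz^{(2\kz)}$. Your treatment of the kernel-support statement by pairing against $f\in L^2(E)$ and $g$ supported at distance at least $t$ from $E$ is the right way to make this rigorous, so no gaps remain.
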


The following estimate is often used in this paper.
Let $\cl_{\cc\to\cc}$ denote the set of all measurable
functions from $\cc$ to $\cc$. For $\dz>0$, define
$$F(\dz)\equiv \lf\{\psi\in\cl_{\cc\to\cc}: \mathrm{there\  exists} \ C>0 \
\mathrm{ such\  that\ for\ all\ }z\in\cc,
\,|\psi(z)|\le C\frac{|z|^\dz}{1+|z|^{2\dz}}\r\}.$$
Then for any non-zero function $\psi\in F(\dz)$, we have
$\int_0^\fz|\psi(t)|^2\frac{\,dt}{t}<\fz$. It was proved in \cite{hlmmy} that
for all $f\in L^2(\cx)$,
\begin{equation}\label{2.8}
\int_0^\fz\|\psi(t\sqrt L)f\|_{L^2(\cx)}^2\frac{\,dt}{t}
=\int_0^\fz|\psi(t)|^2\frac{\,dt}{t}
\|f\|^2_{L^2(\cx)}.
\end{equation}

\subsection{Orlicz functions \label{s2.3}}

\hskip\parindent Let $\omega$ be a positive function defined on
$\rr_+\equiv(0,\,\fz).$ The function $\omega$ is said to be of upper
(resp. lower) type $p$  for some $p\in[0,\,\fz)$, if there exists a
positive constant $C$ such that for all $t\geq 1$ (resp. $t\in (0,
1]$) and $s\in (0,\fz)$,
\begin{equation}\label{2.9}
\omega(st)\le Ct^p \omega(s).
\end{equation}

Obviously, if $\oz$ is of lower type $p$ for some $p>0$, then
$\lim_{t\to0+}\oz(t)=0.$ So for the sake of convenience, if it is
necessary, we may assume that $\oz(0)=0.$ If $\oz$ is of both upper
type ${p_1}$ and lower type $p_0$, then $\oz$ is said to be of type
$(p_0,{p_1}).$ Let
\begin{equation*}
p_\oz^+\equiv\inf\{ p>0:\ \mathrm{there\ exists} \ C>0 \ \mathrm{such \ that }
\ \eqref{2.9} \ \mathrm{holds\ for\ all}\ t\in[1,\fz),\ s\in (0,\fz)\},
\end{equation*}
and
\begin{equation*}
p_\oz^-\equiv\sup\{ p>0:\ \mathrm{there\ exists} \ C>0 \ \mathrm{such \ that }
\ \eqref{2.9} \ \mathrm{holds\ for\ all}\
t\in(0,1],\ s\in (0,\fz)\}.
\end{equation*}
The function $\oz$ is said to be of strictly lower type $p$ if for all $t\in(0,1)$
and $s\in (0,\fz)$, $\omega(st)\le t^p \omega(s),$ and we define
\begin{equation*}
p_\oz\equiv\sup\{ p>0: \omega(st)\le t^p \omega(s) \ \mathrm{holds\ for\
all}\ s\in (0,\fz)\ \mathrm{and}\ t\in(0,1)\}.
\end{equation*} It is easy to see that $p_\oz\le p_\oz^{-}\le{p_\oz^+}$ for all $\oz.$
In what follows, $p_\oz$, $p_\oz^-$ and ${p_\oz^+}$ are called the
strictly critical lower type index, the critical lower
type index and the critical upper type index of $\oz$, respectively.

\begin{rem}\label{r2.1}\rm
We claim that if $p_\oz$ is defined as above, then $\oz$ is also of
strictly lower type $p_\oz$. In other words, $p_\oz$ is attainable.
In fact, if this is not the case, then there exist some $s\in
(0,\fz)$ and $t\in (0,1)$ such that $\oz(st)>t^{p_\oz}\oz(s)$. Hence
there exists $\ez\in(0,p_\oz)$ small enough such that
$\oz(st)>t^{p_\oz-\ez }\oz(s)$, which is contrary to the definition
of $p_\oz$. Thus, $\oz$ is of strictly lower type $p_\oz$.
\end{rem}

Throughout the whole paper, we always assume that $\oz$ satisfies
the following assumption.

\begin{proof}[\bf Assumption (C)] Let $\oz$ be a positive function
defined on $\rr_+$, which is of strictly lower type and its
strictly lower type index $p_\oz\in (0,1]$. Also assume
that $\oz$ is continuous, strictly
increasing and concave.
\end{proof}

Notice that if $\oz$ satisfies Assumption (C), then $\oz(0)=0$
and $\oz$ is obviously of upper type 1. Since $\oz$ is concave,
it is subadditive. In fact, let $0<s<t$, then
$$\oz(s+t)\le \frac{s+t}{t}\oz(t)\le \oz(t)
+\frac{s}{t}\frac{t}{s}\oz(s)=\oz(s)+\oz(t).$$ For any concave
function $\oz$ of strictly lower type $p$, if we set
$\wz\oz(t)\equiv\int_0^t\frac{\oz(s)}{s}\,ds$ for $t\in [0,\fz)$,
then by \cite[Proposition 3.1]{v}, $\wz\oz$ is equivalent to $\oz$,
namely, there exists a positive constant $C$ such that
$C^{-1}\oz(t)\le \wz\oz(t)\le C\oz(t)$ for all $t\in [0,\fz)$;
moreover, $\wz\oz$ is strictly increasing, concave, subadditive and
continuous function of strictly lower type $p.$ Since all our
results are invariant on equivalent functions, we always assume that
$\oz$ satisfies Assumption (C); otherwise, we may replace $\oz$ by
$\wz\oz.$

\begin{proof}[\bf Convention]\rm From Assumption (C), it follows that
$0<p_\oz\le p_\oz^{-}\le {p_\oz^+}\le 1.$ In what follows, if
\eqref{2.9} holds for ${p_\oz^+}$ with $t\in [1,\fz)$, then we
choose ${\wz p_\oz}\equiv{p_\oz^+}$; otherwise  $p_\oz^+<1$ and we
choose ${\wz p_\oz}\in (p_\oz^+,1)$ to be close enough to $p_\oz^+$,
the meaning will be made clear in the context.
\end{proof}

For example, if $\oz(t)=t^p$ with $p\in (0, 1]$
for all $t\in (0,\fz)$, then $p_\oz={p_\oz^+}={\wz p_\oz}=p$;
if $\oz(t)=t^{1/2}\ln(e^4+t)$ for all $t\in (0,\fz)$,
then $p_\oz={p_\oz^+}=1/2$, but $1/2<{\wz p_\oz}<1$.

Let $\oz$ satisfy Assumption (C). A measurable function $f$ on
$\cx$ is said to be in the space $L(\oz)$ if
$\int_{\cx}\oz(|f(x)|)\,d\mu(x)< \fz.$
Moreover, for any $f\in L(\oz)$, define
$$\|f\|_{L(\oz)}\equiv\inf\lf\{\lz>0:\ \int_{\cx}\oz\lf(\frac{|f(x)|}
{\lz}\r)\,d\mu(x)\le 1\r\}.$$

Since $\oz$ is strictly increasing, we define the function
$\ro(t)$ on $\rr_+$ by
\begin{equation}\label{2.10}\ro(t)\equiv\frac{t^{-1}}{\oz^{-1}(t^{-1})}
\end{equation}
for all $t\in (0,\fz)$, where $\oz^{-1}$ is the inverse function of
$\oz.$ Then the types of $\oz$ and $\rho$ have the following
relation; see \cite{v} for a proof.

\begin{prop}\label{p2.1}
Let $0<p_0\le {p_1}\le1$ and $w$ be an increasing function. Then $\oz$
is of type $(p_0,\,{p_1})$ if and only if $\ro$ is of type
$(p_1^{-1}-1,\, p_0^{-1}-1).$
\end{prop}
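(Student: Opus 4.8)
The plan is to prove the equivalence $\oz$ is of type $(p_0,p_1)$ $\iff$ $\ro$ is of type $(p_1^{-1}-1,p_0^{-1}-1)$ by unwinding the definitions of the type conditions through the relation $\ro(t)=t^{-1}/\oz^{-1}(t^{-1})$. First I would record that the upper/lower type conditions on $\oz$ translate into two-sided power bounds for the inverse function: since $\oz$ is increasing and, say, $\oz(st)\le Cs^{p_1}\oz(t)$ for all $s\ge 1$ (upper type $p_1$), applying $\oz^{-1}$ and a change of variables should yield a bound of the form $\oz^{-1}(\lz u)\ge c\lz^{1/p_1}\oz^{-1}(u)$ for $\lz\ge 1$ (up to adjusting constants). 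Likewise the lower type $p_0$ condition $\oz(st)\le Cs^{p_0}\oz(t)$ for $s\in(0,1]$ should give $\oz^{-1}(\lz u)\le C'\lz^{1/p_0}\oz^{-1}(u)$ for $\lz\ge 1$. Making these two implications precise — and checking they are actually equivalences, i.e. that the inverse-function bounds imply the original type conditions back — is the technical heart.

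Next I would substitute these inverse-function estimates directly into \eqref{2.10}. Writing $\ro(st)$ with $s\ge 1$ as $\frac{(st)^{-1}}{\oz^{-1}((st)^{-1})}$ and comparing with $\ro(t)=\frac{t^{-1}}{\oz^{-1}(t^{-1})}$, one reduces to comparing $\oz^{-1}((st)^{-1})$ with $\oz^{-1}(t^{-1})$, where now the argument is being scaled \emph{down} by the factor $s^{-1}\le 1$; applying the inverse-function bounds (in the direction $\lz=s^{-1}\le 1$, which one gets by reading the $\lz\ge 1$ bounds in reverse) produces exactly a factor $s^{p_1^{-1}-1}$ from above and $s^{p_0^{-1}-1}$ from below, i.e. $\ro$ is of upper type $p_0^{-1}-1$ and lower type $p_1^{-1}-1$. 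For the converse direction I would note that $\ro$ itself is of the same form with the roles reversed — indeed from \eqref{2.10} one has $\oz^{-1}(t^{-1})=t^{-1}/\ro(t)$, so $\oz$ can be recovered from $\ro$ by an entirely symmetric formula — hence the reverse implication follows by applying the forward implication to $\ro$ in place of $\oz$ with the index pair $(p_1^{-1}-1,p_0^{-1}-1)$, whose ``reciprocal-minus-one'' transform returns $(p_0,p_1)$.

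The main obstacle I anticipate is bookkeeping with the constants and the direction of the inequalities: the type conditions \eqref{2.9} only hold with a multiplicative constant $C$ (not strictly, except for the special ``strictly lower type'' index), the variable being scaled runs over $[1,\fz)$ in one case and $(0,1]$ in the other, and passing to $\oz^{-1}$ swaps ``scaling up'' with ``scaling down'' and turns $p$ into $1/p$. One must be careful that a bound valid for $s\ge 1$ gives, after inversion, a bound valid for the reciprocal range, and that combining ``$\oz$ is of type $(p_0,p_1)$'' really does pin down $\oz^{-1}$ two-sidedly. Since Proposition \ref{p2.1} is quoted from \cite{v}, I would present this as a clean self-contained derivation but keep the constant-tracking light, emphasizing the structural symmetry $\oz\leftrightarrow\ro$ under $p\mapsto p^{-1}-1$ rather than grinding through every estimate.
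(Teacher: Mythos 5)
Your forward direction is sound, and since the paper itself gives no argument for Proposition \ref{p2.1} (it simply cites Viviani \cite{v}), the only question is correctness. Converting the type conditions on $\oz$ into the two-sided bounds $c\lz^{1/p_1}\oz^{-1}(u)\le\oz^{-1}(\lz u)\le C\lz^{1/p_0}\oz^{-1}(u)$ for $\lz\ge1$ and substituting into \eqref{2.10} does yield that $\ro$ is of type $(p_1^{-1}-1,\,p_0^{-1}-1)$. The genuine gap is in your converse. The index map $p\mapsto p^{-1}-1$ is \emph{not} an involution: applying it twice gives $(2p-1)/(1-p)\ne p$ in general (e.g. $1/2\mapsto 1\mapsto 0$), so the index pair $(p_1^{-1}-1,p_0^{-1}-1)$ is not sent back to $(p_0,p_1)$. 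Likewise the transform $\oz\mapsto\ro$ is not symmetric in the sense you invoke: applying the same formula to $\ro$ produces $t^{-1}/\ro^{-1}(t^{-1})$, which involves the \emph{inverse function} of $\ro$, not the quantity $t^{-1}/\ro(t)=\oz^{-1}(t^{-1})$ that you isolate from \eqref{2.10}; concretely, for $\oz(t)=t^p$ one has $\ro(t)=t^{1/p-1}$ and $t^{-1}/\ro^{-1}(t^{-1})=t^{(2p-1)/(1-p)}\ne t^p$. Moreover the hypotheses of the forward statement ($0<p_0\le p_1\le1$) need not hold for $\ro$, whose indices $p_1^{-1}-1,\,p_0^{-1}-1$ can exceed $1$. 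So ``apply the forward implication to $\ro$'' neither returns $\oz$ nor the indices $(p_0,p_1)$; that step fails.

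The repair is already contained in your first paragraph: every reduction is an equivalence. The type $(p_0,p_1)$ conditions on $\oz$ are equivalent to the two-sided scaling bounds $c s^{p_0}\oz(t)\le\oz(st)\le Cs^{p_1}\oz(t)$ for $s\ge1$; by monotone inversion these are equivalent to the two-sided bounds on $\oz^{-1}$ above; and since $\oz^{-1}(u)=u/\ro(1/u)$, those are in turn equivalent to the two-sided scaling bounds on $\ro$, i.e. to $\ro$ being of type $(p_1^{-1}-1,\,p_0^{-1}-1)$. Reading this chain from right to left gives the converse directly; drop the symmetry/involution argument.
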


\section{Tent spaces associated to Orlicz functions\label{s3}}

\hskip\parindent In this section, we study the tent spaces associated
to Orlicz functions $\oz$ satisfying Assumption (C).
 We first recall some notions.

For any $\nu>0$ and $x\in\cx$, let
$\Gamma_\nu(x)\equiv\{(y,t)\in\cx\times(0,\fz):\,d(x,y)<\nu t\}$
denote the  cone of aperture $\nu$ with vertex $x\in\cx$. For any
closed set $F$ of $\cx$, denote by $\car_\nu{F}$ the union of all
cones with vertices in $F$, namely, $\car_\nu{F}\equiv\bigcup_{x\in
F}\Gamma_\nu(x)$; and for any open set $O$ in $\cx$, denote the tent
over $O$ by $T_\nu(O)$, which is defined as
$T_\nu(O)\equiv[\car_\nu(O^\com)]^\com.$ It is easy to see that
$T_\nu(O)=\{(x,t)\in\cx\times(0,\fz):\,d(x,O^\com)\ge \nu t\}.$
In what follows, we denote $\car_1(F)$, $\Gamma_1(x)$ and $T_1(O)$ simply by
$\car(F)$, $\Gamma(x)$ and $\widehat O$, respectively.

For all measurable function $g$ on $\cx\times (0,\fz)$ and
$x\in\cx$, define
\begin{equation*}
\ca_\nu (g)(x)\equiv
\lf(\int_{\Gamma_\nu(x)}|g(y,t)|^2\frac{\,d\mu(y)}{V(x,t)}\frac{\,dt}{t}\r)^{1/2}
\end{equation*}
and denote $\ca_1 (g)$ simply by $\ca(g)$.

If $\cx=\rn$, Coifman, Meyer and Stein \cite{cms} introduced the
tent space $T_2^p({\rr}^{n+1}_+)$ for $p\in(0,\fz)$. The tent spaces
$T_2^p(\cx)$ on spaces of homogenous type were studied by Russ
\cite{ru}. Recall that a measurable function $g$ is said to belong
to the space $T_2^p(\cx)$ with $p\in (0,\fz)$, if
$\|g\|_{T_2^p(\cx)}\equiv \|\ca(g)\|_{L^p(\cx)}<\fz$. On the other
hand, Harboure, Salinas and Viviani \cite{hsv} introduced the tent
space $T_\oz({\rr}^{n+1}_+)$ associated to the function $\oz$.

In what follows, we denote by $T_\oz(\cx)$ the space of all
measurable function $g$ on $\cx\times (0,\fz)$ such that $\ca(g)\in
L(\oz)$, and for any $g\in T_\oz(\cx)$, define its norm by
$$\|g\|_{T_\oz(\cx)}\equiv \|\ca(g)\|_{L(\oz)}=\inf\lf\{\lz>0:\ \int_{\cx} \oz
\lf(\frac{\ca(g)(x)}{\lz}\r) \,d\mu(x)\le 1\r\}.$$

A function $a$ on $\cx\times (0,\fz)$ is called a $T_{\oz}(\cx)$-atom if

(i) there exists a ball $B\subset
\cx$ such that $\supp a\subset \widehat{B};$

(ii) $\int_{\widehat{B}}|a(x,t)|^2\frac{\,d\mu(x)\,dt}{t}\le
[V(B)]^{-1}[\ro(V(B))]^{-2}.$

Since $\oz$ is concave, it is easy to see that for all
$T_{\oz}(\cx)$-atom $a$, we have $\|a\|_{T_{\oz}(\cx)}\le 1.$ In
fact, since $\oz^{-1}$ is convex, by the Jensen inequality and the
H\"older inequality, we have
\begin{eqnarray*}
\oz^{-1}\lf(\frac{\int_B\oz(\ca(a)(x))\,d\mu(x)}{V(B)}\r)
&&\le \frac{1}{V(B)}\int_B\ca(a)(x)\,d\mu(x)\le
\frac{\|a\|_{T_2^2(\cx)}}{[V(B)]^{1/2}}\le \frac{1}{V(B)\ro(V(B))},
\end{eqnarray*}
which implies that
$$ \int_B\oz(\ca(a)(x))\,d\mu(x)\le V(B)
\oz\lf(\frac{1}{V(B)\ro(V(B))}\r)=1.$$
Thus, the claim holds.

For functions in the space $T_\oz(\cx)$, we have the following
atomic decomposition. The proof of Theorem \ref{t3.1} is similarly
to those of \cite[Theorem 1]{cms}, \cite[Theorem 1.1]{ru}
and \cite[Theorem 3.1]{jy}; we omit the details.

\begin{thm}\label{t3.1}
Let $\oz$ satisfy Assumption (C). Then for any $f\in
T_\oz(\cx)$, there exist $T_{\oz}(\cx)$-atoms $\{a_j\}_{j=1}^\fz$
and $\{\lz_j\}_{j=1}^\fz\subset \cc$ such that for almost
every $(x,\,t)\in \cx\times(0,\fz)$,
\begin{equation}\label{3.1}
f(x,t)=\sum_{j=1}^\fz\lz_ja_j(x,t),
\end{equation}
and the series converges in the space $T_{\oz}(\cx)$. Moreover,
there exists a positive constant $C$ such that for all $f\in
T_\oz(\cx)$,
\begin{equation}\label{3.2}
\Lambda(\{\lz_ja_j\}_j)\equiv\inf\lf\{\lz>0:
\ \sum_{j=1}^\fz V(B_j)\oz\lf(\frac{|\lz_j|}
{\lz V(B_j)\ro(V(B_j))}\r)\le1\r\}\le
C\|f\|_{T_\oz(\cx)},
\end{equation}
where $\widehat{B_j}$ appears as the support of $ a_j$.
\end{thm}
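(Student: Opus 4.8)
The plan is to follow the classical Coifman--Meyer--Stein tent-space decomposition, adapted to the Orlicz setting as in Russ and in Jiang--Yang. First I would, for $f\in T_\oz(\cx)$, set $O_k\equiv\{x\in\cx:\ \ca(f)(x)>2^k\}$ for $k\in\zz$; since $\ca(f)\in L(\oz)$, each $O_k$ is open with finite measure, and one defines the ``global'' parts via the tents $\widehat{O_k}$. The key geometric fact is that, up to a null set, the support of $f$ in $\cx\times(0,\fz)$ is contained in $\bigcup_k(\widehat{O_k}\setminus\widehat{O_{k+1}})$, so one may write $f=\sum_k f\chi_{\widehat{O_k}\setminus\widehat{O_{k+1}}}$. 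Next I would take a Whitney-type decomposition of each $O_k$ into balls $\{B_{k,i}\}_i$ with bounded overlap and radii comparable to the distance to $O_k^\com$, and split $f\chi_{\widehat{O_k}\setminus\widehat{O_{k+1}}}$ further as $\sum_i f\chi_{T_{k,i}}$ where $T_{k,i}\equiv(\widehat{O_k}\setminus\widehat{O_{k+1}})\cap\widehat{B_{k,i}}$ (one uses here that $\widehat{O_k}\subset\bigcup_i\widehat{cB_{k,i}}$ for a suitable dilation). Setting $a_{k,i}\equiv\lz_{k,i}^{-1}f\chi_{T_{k,i}}$ with $\lz_{k,i}$ chosen as a suitable multiple of $2^k V(B_{k,i})\ro(V(B_{k,i}))$, one checks that $a_{k,i}$ is (a fixed constant times) a $T_\oz(\cx)$-atom: the support condition (i) is immediate from $T_{k,i}\subset\widehat{B_{k,i}}$, and for (ii) one estimates $\int_{T_{k,i}}|f|^2\,d\mu\,dt/t$ by a duality/Fubini argument, integrating $\ca(f)^2$ over $B_{k,i}\setminus O_{k+1}$ against the slice of the tent and using $\ca(f)\le 2^{k+1}$ there, together with $V(B_{k,i})$-type volume comparisons.

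Then I would verify the convergence in $T_\oz(\cx)$: one shows the partial sums form a Cauchy sequence, reducing (via the bounded overlap of the $\widehat{B_{k,i}}$ for fixed $k$ and the disjointness in $k$) to estimating $\ca$ applied to tail sums, which converges because $\sum_{k,i}\int_{B_{k,i}}\oz(\ca(f))\,d\mu\ls\int_\cx\oz(\ca(f))\,d\mu<\fz$. Finally, for the norm estimate \eqref{3.2}, I would fix $\lz>0$ and estimate
\begin{equation*}
\sum_{k,i}V(B_{k,i})\,\oz\!\lf(\frac{|\lz_{k,i}|}{\lz V(B_{k,i})\ro(V(B_{k,i}))}\r)
\ls\sum_{k,i}V(B_{k,i})\,\oz\!\lf(\frac{2^k}{\lz}\r)
\ls\sum_k\oz\!\lf(\frac{2^k}{\lz}\r)\mu(O_k),
\end{equation*}
using the bounded overlap $\sum_i V(B_{k,i})\ls\mu(O_k)$. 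Writing $\mu(O_k)=\mu(\{\ca(f)>2^k\})$ and using the strictly-lower-type-$p_\oz$ property of $\oz$ to compare $\sum_k\oz(2^k/\lz)\mu(O_k)$ with $\int_\cx\oz(\ca(f)(x)/\lz)\,d\mu(x)$ (a standard dyadic layer-cake argument, where concavity/lower type lets one sum the geometric series), one concludes that the displayed sum is $\le 1$ as soon as $\lz$ exceeds a fixed multiple of $\|f\|_{T_\oz(\cx)}$, giving \eqref{3.2}.

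The main obstacle, and the place where the Orlicz structure (as opposed to the $L^p$ case of Coifman--Meyer--Stein) really enters, is the last step: controlling $\sum_k\oz(2^k/\lz)\mu(O_k)$ by $\int_\cx\oz(\ca(f)/\lz)\,d\mu$. In the $L^p$ case this is an exact layer-cake identity up to constants, but here one must exploit that $\oz$ is concave (hence subadditive and of upper type $1$) and of strictly lower type $p_\oz$ so that the dyadic sums telescope with a geometric factor that is summable. A secondary technical point is the passage between the atom normalization involving $\ro(V(B_{k,i}))$ and the natural size $2^k$ appearing from $\ca(f)$; this is handled precisely by the definition \eqref{2.10} of $\ro$ and the identity $\oz(1/(V\ro(V)))\cdot V=1$ used in the atom-normalization computation above. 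Since the paper states that this proof parallels \cite{cms}, \cite{ru} and \cite{jy} and omits the details, I would present only this outline, emphasizing the two Orlicz-specific points, and refer to those sources for the routine geometric lemmas on Whitney decompositions of $O_k$ and the bounded-overlap properties.
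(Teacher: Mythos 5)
Your outline is correct and is essentially the proof the paper has in mind: the paper omits the argument precisely because it is the standard Coifman--Meyer--Stein level-set/Whitney tent decomposition as adapted by Russ to spaces of homogeneous type and by Jiang--Yang to the Orlicz setting, with the atoms normalized through $\ro$ and the estimate \eqref{3.2} obtained from the dyadic layer-cake argument using the strictly lower type $p_\oz$ of $\oz$, exactly as you describe. The points you gloss over (that $f$ vanishes a.e.\ outside $\bigcup_k\widehat{O_k}$, the disjointization of the Whitney tents, and the use of dilated balls/points of global density to get $\ca(f)\le 2^{k+1}$ in the size estimate) are the routine geometric lemmas handled in the cited references, so no genuine gap remains.
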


\begin{rem}\label{r3.1}\rm
(i) Let $\{\lz_j^i\}_{i,j}$ and $\{a_j^i\}_{i,j}$ satisfy
$\Lambda(\{\lz_j^ia_j^i\}_j)<\fz$, where $i=1,\,2.$ Since $\oz$ is
of strictly lower type $p_\oz$, we have
$[\Lambda(\{\lz_j^ia_j^i\}_{i,j})]^{p_\oz}\le
\sum_{i=1}^2[\Lambda(\{\lz_j^ia_j^i\}_j)]^{p_\oz}.$

(ii) Since $\oz$ is concave, it is of upper type 1. Thus,
$\sum_j|\lz_j|\ls \Lambda(\{\lz_ja_j\}_j) \ls \|f\|_{T_\oz(\cx)}.$
\end{rem}

The following conclusions on the convergence of $\eqref{3.1}$
play an important role in the remaining part of this paper.

\begin{cor}\label{c3.1} Let $\oz$ satisfy Assumption (C).
If $f\in T_2^2(\cx)\cap T_\oz(\cx)$, then the decomposition
\eqref{3.1} holds in both $T_\oz(\cx)$ and $T_2^2(\cx)$.
\end{cor}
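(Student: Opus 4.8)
The plan is to revisit the atomic decomposition of $f\in T_\oz(\cx)$ given by Theorem \ref{t3.1} and verify that when $f$ additionally lies in $T_2^2(\cx)$, the very same decomposition converges in $T_2^2(\cx)$. The point is that the construction of the atoms $a_j$ in the proof of Theorem \ref{t3.1} (following \cite{cms,ru,jy}) is intrinsic: one sets, for each $k\in\zz$, $O_k\equiv\{x\in\cx:\,\ca(f)(x)>2^k\}$, decomposes $O_k$ into Whitney-type cubes or balls $\{B_{j,k}\}_j$, and writes $f=\sum_{k,j}\lz_{j,k}a_{j,k}$ where $a_{j,k}$ is a suitable normalization of $f$ restricted to a "tent-difference" region $\widehat{B_{j,k}}\cap(T_1(O_k)\setminus T_1(O_{k+1}))$, with $\lz_{j,k}$ chosen so that $a_{j,k}$ is a $T_\oz(\cx)$-atom. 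Crucially, these regions are pairwise disjoint and their union (up to a null set) is the support of $f$ in $\cx\times(0,\fz)$, so the partial sums are just truncations $f\chi_{E_N}$ for an increasing family of sets $E_N$ exhausting $\supp f$.

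First I would recall precisely this disjointness/exhaustion structure: the sets $T_1(O_k)\setminus T_1(O_{k+1})$, $k\in\zz$, are pairwise disjoint, and $\bigcup_k[T_1(O_k)\setminus T_1(O_{k+1})]$ covers $\cx\times(0,\fz)$ up to the set where $\ca(f)=0$ or $\ca(f)=\fz$, which is $f$-null. Hence if we enumerate the pairs $(j,k)$ as $\{(j_\ell,k_\ell)\}_{\ell\ge1}$ and set $f_N\equiv\sum_{\ell=1}^N\lz_{j_\ell,k_\ell}a_{j_\ell,k_\ell}$, then $f-f_N=f\chi_{F_N}$ where $F_N\equiv\bigl(\cx\times(0,\fz)\bigr)\setminus\bigcup_{\ell=1}^N\bigl(\widehat{B_{j_\ell,k_\ell}}\cap(T_1(O_{k_\ell})\setminus T_1(O_{k_\ell+1}))\bigr)$, and the sets $F_N$ decrease to an $f$-null set. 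Since $f\in T_2^2(\cx)$ means exactly $f\in L^2(\cx\times(0,\fz),\,d\mu(y)\,dt/t)$ after integrating $\ca$ — more precisely $\|f\|_{T_2^2(\cx)}^2=\int_\cx\ca(f)(x)^2\,d\mu(x)=c_n\int_{\cx\times(0,\fz)}|f(y,t)|^2\,d\mu(y)\,dt/t$ by Fubini — the dominated convergence theorem gives $\|f-f_N\|_{T_2^2(\cx)}\to0$. This is the whole argument; it parallels the byproduct observation already used in \cite{jy}.

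The one genuinely necessary point, and the part I would be most careful about, is that the atoms and coefficients extracted for the $T_\oz(\cx)$-convergence are \emph{literally the same} as those one would extract for a $T_2^2(\cx)$ argument — i.e., that the proof of Theorem \ref{t3.1}, though its statement only claims $T_\oz(\cx)$-convergence and the estimate \eqref{3.2}, actually produces the canonical Whitney decomposition above rather than some $\oz$-adapted variant. Since the excerpt states the proof of Theorem \ref{t3.1} is "similar to" \cite{cms,ru,jy} and is omitted, I would simply invoke that the construction there is the standard one and note explicitly that the partial sums coincide with truncations $f\chi_{E_N}$ of $f$ to an increasing exhaustion of $\cx\times(0,\fz)$ by measurable sets. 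With that identification in hand, the $T_2^2(\cx)$-convergence is immediate from dominated convergence as above, and the $T_\oz(\cx)$-convergence is exactly the content of Theorem \ref{t3.1}; combining the two proves the corollary. No new estimate beyond $f\in T_2^2(\cx)$ and the geometry of the Whitney decomposition is required, so there is no real obstacle — only the bookkeeping of making the "same decomposition" claim precise.
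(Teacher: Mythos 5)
Your proposal is correct and is essentially the argument the paper has in mind: the paper omits the proof and refers to \cite[Proposition 3.1]{jy}, whose proof is exactly this observation that the atomic pieces $\lz_j a_j$ are restrictions of $f$ to pairwise disjoint sets exhausting $\supp f$, so the tails are truncations $f\chi_{F_N}$ and tend to zero in $T_2^2(\cx)$ by dominated convergence, while the $T_\oz(\cx)$-convergence is already given by Theorem \ref{t3.1}. Your caveat about the construction being the canonical Whitney-type one is the right point to check, and it does hold for the decomposition used here.
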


The proof of Corollary \ref{c3.1} is similar to that of \cite[Proposition 3.1]{jy};
we omit the details.

In what follows, let $T_\oz^b(\cx)$ and $T^{p,b}_2(\cx)$ denote,
respectively, the spaces of all functions in $T_\oz(\cx)$ and
$T^{p}_2(\cx)$ with bounded support, where $p\in (0,\fz)$. Here and
in what follows, a function $f$ on $\cx\times (0,\fz)$ having
bounded support means that there exist a ball $B\subset\cx$ and
$0<c_1<c_2$ such that $\supp f\subset B\times (c_1, c_2)$.

\begin{lem}\label{l3.1}
(i) For all $p\in (0,\,\fz)$,
$T^{p,b}_2(\cx)\subset T_2^{2,b}(\cx).$
In particular, if $p\in (0,2]$, then
$T^{p,b}_2(\cx)$ coincides with $T_2^{2,b}(\cx)$.

(ii) Let $\oz$ satisfy Assumption (C). Then
$T^b_\oz(\cx)$ coincides with $T_2^{2,b}(\cx).$
\end{lem}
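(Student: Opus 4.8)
The plan is to establish both containments in each part by pointwise comparison of the area functions $\ca(f)$ restricted to functions with bounded support, exploiting that for such $f$ the cone integral defining $\ca(f)(x)$ is taken over a truncated region $\cx\times(c_1,c_2)$ and that $\ca(f)$ is supported in a fixed ball. First I would prove (i). Suppose $\supp f\subset B\times(c_1,c_2)$. If $(y,t)\in\Gamma(x)$ contributes to $\ca(f)(x)$, then $t\in(c_1,c_2)$ and $d(x,y)<t<c_2$, so $y\in B$ forces $x\in c B'$ for a fixed dilate $B'$ of $B$ depending only on $c_2$ and the radius of $B$; hence $\supp\ca(f)\subset cB'$, a set of finite measure. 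For $p\in(0,2]$, on this fixed-measure set H\"older's inequality gives $\|\ca(f)\|_{L^2}\le \mu(cB')^{1/2-1/p}\|\ca(f)\|_{L^p}$ when $p<2$ (and equality of the norms up to the measure factor when $p=2$), so $T^{p,b}_2(\cx)\subset T^{2,b}_2(\cx)$; conversely $\|\ca(f)\|_{L^p}\le\mu(cB')^{1/p-1/2}\|\ca(f)\|_{L^2}$ gives the reverse inclusion, so the two spaces coincide for $p\le 2$. For general $p\in(0,\fz)$ one only needs the first direction: when $p>2$, apply H\"older with exponents $p/2$ and $(p/2)'$ on $cB'$ to get $\|\ca(f)\|_{L^2}\le\mu(cB')^{1/2-1/p}\|\ca(f)\|_{L^p}<\fz$. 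This proves (i).

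Next I would prove (ii). The inclusion $T^b_\oz(\cx)\subset T^{2,b}_2(\cx)$: if $\ca(f)\in L(\oz)$ and $\supp f\subset B\times(c_1,c_2)$, then as above $\supp\ca(f)\subset cB'$ has finite measure, and since $\oz$ is of strictly lower type $p_\oz>0$ there is a constant $c>0$ with $\oz(s)\ge c\,s$ for $s\in(0,1]$ (equivalently $t^{p_\oz}\le t$ for $t\in(0,1]$ applied to $\oz$), which together with the fact that $L(\oz)\hookrightarrow L^1_{\loc}$ for concave $\oz$ shows $\ca(f)\in L^1(cB')$; but more directly, using concavity and Jensen as in the $T_\oz$-atom estimate above, $\int_{cB'}\ca(f)\,d\mu<\fz$, and then a second application of H\"older-type reasoning on the fixed-measure set — controlling the $L^2$ tail where $\ca(f)$ is large and the $L^1$ part where it is small — gives $\ca(f)\in L^2(cB')$. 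Actually the cleanest route here is: on $cB'$, $\oz$ being concave and $\oz(0)=0$ forces $\oz(s)\gtrsim \min(s,1)\cdot\oz(1)$ up to constants only for $s$ bounded, so instead I would argue $\ca(f)\in L^2$ by noting $\ca(f)(x)^2=\int_{\Gamma(x)\cap(\cx\times(c_1,c_2))}|f(y,t)|^2\frac{d\mu(y)}{V(x,t)}\frac{dt}{t}\le c_1^{-1}\int_{c_1}^{c_2}\int_{B}|f(y,t)|^2\frac{d\mu(y)\,dt}{\inf V(\cdot,c_1)}$, which is finite and independent of $x$, and integrating over $cB'$ gives $\|\ca(f)\|_{L^2}^2<\fz$; finiteness of $f$ in $L^2(B\times(c_1,c_2))$ follows since $\ca(f)\in L(\oz)$ is not identically zero forces $f\in L^2_{\loc}$ by Fubini. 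For the reverse inclusion $T^{2,b}_2(\cx)\subset T^b_\oz(\cx)$: given $\ca(f)\in L^2(cB')$ with $cB'$ of finite measure, split $cB'=\{\ca(f)\le 1\}\cup\{\ca(f)>1\}$; on the first set $\oz(\ca(f))\le\oz(1)$ so the integral is at most $\oz(1)\mu(cB')<\fz$, and on the second set $\oz$ of upper type $1$ gives $\oz(\ca(f))\lesssim \ca(f)\le\ca(f)^2$, whose integral is $\le\|\ca(f)\|_{L^2}^2<\fz$; hence $\int_{cB'}\oz(\ca(f))\,d\mu<\fz$, i.e. $\ca(f)\in L(\oz)$ after rescaling, so $f\in T^b_\oz(\cx)$.

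The main obstacle I anticipate is bookkeeping the dilation constant: verifying carefully that $\supp f\subset B\times(c_1,c_2)$ really does confine $\supp\ca(f)$ to a single ball $cB'$ of finite measure with $c$ depending only on the support data of $f$ (not on $f$ itself), and that the doubling property \eqref{2.1}–\eqref{2.3} then guarantees $\mu(cB')<\fz$. Once that geometric localization is in hand, every remaining step is a one-line H\"older or upper/lower-type estimate on a fixed finite-measure set, using only that $\oz$ is concave (hence of upper type $1$) and of strictly positive lower type. I would also remark that part (ii) combined with part (i) yields, for any $p\in(0,2]$, that $T^b_\oz(\cx)$, $T^{2,b}_2(\cx)$ and $T^{p,b}_2(\cx)$ all coincide as sets, which is the form in which the lemma is used later.
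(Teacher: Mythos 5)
Your localization of $\supp\ca(f)$ to a fixed dilate $cB'$ of finite measure is correct, and so are the ``easy'' directions: $T_2^{2,b}(\cx)\subset T_2^{p,b}(\cx)$ for $p\le2$ by H\"older, $T_2^{p,b}(\cx)\subset T_2^{2,b}(\cx)$ for $p\ge2$ by H\"older, and $T_2^{2,b}(\cx)\subset T_\oz^{b}(\cx)$ by splitting $cB'$ into $\{\ca(f)\le1\}$ and $\{\ca(f)>1\}$ and using that $\oz$ is of upper type $1$. But the hard containments --- $T_2^{p,b}(\cx)\subset T_2^{2,b}(\cx)$ for $p<2$ and $T^b_\oz(\cx)\subset T_2^{2,b}(\cx)$ --- are not proved. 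For $p<2$ you invoke ``$\|\ca(f)\|_{L^2}\le\mu(cB')^{1/2-1/p}\|\ca(f)\|_{L^p}$'', which is H\"older in the wrong direction: on a finite measure set one only gets $L^2\subset L^p$, not $L^p\subset L^2$, and the inequality you wrote is false for general functions. In part (ii) the argument is circular: your ``cleanest route'' bounds $\ca(f)(x)^2$ by a constant times $\int_{c_1}^{c_2}\int_B|f(y,t)|^2\,d\mu(y)\,dt$ and declares this finite, but finiteness of exactly this quantity is what membership in $T_2^{2,b}(\cx)$ means and is the thing to be proved (the definition of $T^b_\oz(\cx)$ gives only measurability, bounded support and $\ca(f)\in L(\oz)$); the earlier suggestion that $L^1$ control plus ``H\"older-type reasoning'' yields $L^2$ on a bounded set is likewise false, and ``$\ca(f)\in L(\oz)$ forces $f\in L^2_{\loc}$ by Fubini'' is an assertion, not an argument.

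The missing idea (which is the whole content of the lemma, and what the argument of \cite[Lemma 3.3]{jy} that the paper appeals to actually uses) is a reverse cone estimate combined with a finite covering. Cover $B$ by finitely many balls $B_i$ of radius $c_1/2$ centered in $B$. If $x\in B_i$, $y\in B_i$ and $t\in(c_1,c_2)$, then $d(x,y)<c_1<t$, so $B_i\times(c_1,c_2)\subset\Gamma(x)$ and hence
\begin{equation*}
[\ca(f)(x)]^2\ \ge\ \int_{c_1}^{c_2}\int_{B_i}|f(y,t)|^2\,\frac{d\mu(y)}{V(x,t)}\,\frac{dt}{t}
\ \ge\ \frac{1}{V(x,c_2)}\int_{c_1}^{c_2}\int_{B_i}|f(y,t)|^2\,\frac{d\mu(y)\,dt}{t}.
\end{equation*}
Since $\ca(f)\in L^p(\cx)$ (any $p>0$) or $\ca(f)\in L(\oz)$ together with $\oz(s)\to\fz$ as $s\to\fz$ forces $\ca(f)(x)<\fz$ for almost every $x$, picking one such $x$ in each $B_i$ shows $\int_{c_1}^{c_2}\int_{B_i}|f|^2\,\frac{d\mu\,dt}{t}<\fz$ for every $i$, and summing over the finitely many $i$ gives $f\in L^2(B\times(c_1,c_2),\,d\mu\,\frac{dt}{t})$, which (by the Fubini identity $\|f\|_{T_2^2(\cx)}\sim\|f\|_{L^2(\cx\times(0,\fz),\,d\mu\,dt/t)}$, or by your crude pointwise bound now that the right-hand side is known finite) yields $f\in T_2^{2,b}(\cx)$. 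With this step inserted, both the $p<2$ case of (i) and the forward inclusion in (ii) follow, and the rest of your proposal stands.
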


The proof of Lemma \ref{l3.1} is similar to that of \cite[Lemma 3.3]{jy}
and we omit the details.

\section{Orlicz-Hardy spaces and their dual spaces \label{s4}}

\hskip\parindent In this section, we always assume that the operator $L$ satisfies
Assumptions (A) and (B), and the Orlicz function $\oz$
satisfies Assumption (C). We introduce the Orlicz-Hardy space
associated to $L$ via the Lusin-area function and give its dual space
via the atomic and molecular decompositions of the Orlicz-Hardy space.
Let us begin with some notions and notation.

For all function $f\in L^2(\cx)$, the Lusin-area function $\cs_L(f)$
is defined by setting, for all $x\in\cx$,
\begin{equation}\label{4.1}
S_Lf(x)\equiv \bigg(\iint_{\Gamma(x)}|t^2Le^{-t^2L}f(y)|^2
\frac{\,d\mu(y)}{V(x,t)}\frac{\,dt}{t}\bigg)^{1/2}.
\end{equation}
From \eqref{2.8}, it follows that $\cs_L$ is bounded on $L^2(\cx)$.
Hofmann and Mayboroda \cite{hm1} introduced the Hardy space
$H_L^1(\rn)$ associated with a second order divergence form elliptic
operator $L$ as the completion of $\{f\in L^2(\rn):\ \cs_L(f)\in
L^1(\rn)\}$ with respect to the norm
$\|f\|_{H_L^1(\rn)}\equiv\|\cs_L(f)\|_{L^1(\rn)}.$ Similarly,
Hofmann et al \cite{hlmmy} introduced the Hardy space $H_L^1(\cx)$
associated to the nonnegative self-adjoint operator $L$ satisfying
the Davies-Gaffney estimate on metric measured spaces in the same
way.

Let $\car(L)$ denote the range of $L$ in $L^2(\cx)$ and $\nn(L)$ its
null space. Then $\overline{\car(L)}$ and $\nn(L)$ are orthogonal
and
\begin{equation}\label{4.2}
L^2(\cx)=\overline{\car(L)} \oplus \nn(L).
\end{equation}
Following \cite{amr,hlmmy}, we introduce the Orlicz-Hardy space
$H_{\oz,L}(\cx)$ associated to $L$ and $\oz$ as follows.

\begin{defn}\label{d4.1}
Let $L$ satisfy Assumptions (A) and (B) and $\oz$ satisfy Assumption
(C). A function $f\in \overline{\car(L)}$ is said to be in $\wz
H_{\oz,\,L}(\cx)$ if $\cs_L(f)\in L(\oz)$; moreover, define
$$\|f\|_{H_{\oz,L}(\cx)}\equiv \|\cs_L(f)\|_{L(\oz)}
\equiv\inf\lf\{\lz>0:\int_{\cx}\oz\lf
(\frac{\cs_L(f)(x)}{\lz}\r)\,d\mu(x)\le 1\r\}.$$
The Orlicz-Hardy space
$H_{\oz,L}(\cx)$  is defined to be the  completion of $\wz
H_{\oz,L}(\cx)$  in the norm $\|\cdot\|_{H_{\oz,L}(\cx)}.$
\end{defn}

\begin{rem}\label{r4.1}\rm
(i) Notice that for $0\neq f\in L^2(\cx)$, $\|\cs_L(f)\|_{L(\oz)}=0$
holds if and only if $f\in \nn(L)$. Indeed, if $f\in\nn(L)$, then
$t^2Le^{-t^2L}f=0$ and hence $\|\cs_L(f)\|_{L(\oz)}=0$. Conversely,
if $\|\cs_L(f)\|_{L(\oz)}=0$, then $t^2Le^{-t^2L}f=0$ for all $t\in
(0,\fz)$. Hence for all $t\in (0,\fz)$,
$(e^{-t^2L}-I)f=\int_0^t-2sLe^{-s^2L}f\,ds=0,$ which further implies that
$Lf=Le^{-t^2L}f=0$ and $f\in\nn(L)$. Thus, in Definition \ref{d4.1},
it is necessary to use $\overline{\car(L)}$ rather than $L^2(\cx)$
to guarantee $\|\cdot\|_{H_{\oz,L}(\cx)}$ to be a norm. For example,
if $\mu(\cx)<\fz$ and $e^{-tL}1=1$, then we have $1\in L^2(\cx)$ and
$L1=Le^{-tL}1=\frac{\,d}{\,dt}e^{-tL}1=0,$
which implies that $1\in\nn(L)$ and
$\|\cs_L(1)\|_{L(\oz)}=0$.

(ii) From the strictly lower type property of $\oz$, it is easy to
see that for all $f_1,\,f_2\in H_{\oz,L}(\cx)$,
$\|f_1+f_2\|_{H_{\oz,L}(\cx)}^{p_\oz}
\le \|f_1\|_{H_{\oz,L}(\cx)}^{p_\oz}
+\|f_2\|_{H_{\oz,L}(\cx)}^{p_\oz}.$

(iii) From the theorem of completion of Yosida \cite[p.\,56]{yo}, it follows that
$\wz H_{\oz,\,L}(\cx)$  is dense in $H_{\oz,\,L}(\cx)$,
namely, for any $f\in H_{\oz,\,L}(\cx)$, there exists a Cauchy sequence
$\{f_k\}^{\fz}_{k=1}$ in $\wz H_{\oz,\,L}(\cx)$
such that $\lim_{k\to\fz}\|f_k-f\|_{H_{\oz,\,L}(\cx)}=0.$

(iv) If $\oz(t)= t$ for all $t\in (0,\fz)$, then the space
$H_{\oz,\,L}(\cx)$ is just the
space $H^1_L(\cx)$ introduced by  Hofmann et al \cite{hlmmy}.
Moreover, if $\oz(t)=t^p$ for all $t\in (0,\fz)$, where $p\in (0,1]$,
we then denote the Orlicz-Hardy space $H_{\oz,L}(\cx)$ by $H_L^p(\cx)$.

(v) If $\cx=\rn$, $L=-\Delta$ and $\oz$ satisfies Assumption (C)
with $p_\oz \in (n/(n+1),1]$, then the Orlicz-Hardy space $H_{\oz,L}(\rn)$
coincides with the Orlicz-Hardy space in \cite{jyz}
and it was proved there that $H_{\oz,L}(\rn)=H_\oz(\rn)$;  see \cite{ja,v}
for the definition of $H_\oz(\rn)$.
\end{rem}

We now introduce the notions of $(\oz,M)$-atoms and $(\oz,M,\ez)$-molecules as follows.

\begin{defn}\label{d4.2}
Let $M\in\cn$. A function $\az\in L^2(\cx)$ is called an $(\oz,M)$-atom associated to
the operator $L$ if there exists a function $b\in \cd(L^M)$ and a ball
$B$ such that

(i) $\az=L^Mb$;

(ii) $\supp L^kb\subset B$, $k\in\{0,1,\cdots,M\}$;

(iii) $\|(r_B^2L)^kb\|_{L^2(\cx)}\le r_B^{2M}[V(B)]^{-1/2}[\ro(V(B))]^{-1},$
  $k\in \{0,1,\cdots,M\}$.
\end{defn}

\begin{defn}\label{d4.3}
Let $M\in\cn$ and $\ez\in(0,\fz)$.
A function $\bz\in L^2(\cx)$ is called an $(\oz,M,\ez)$-molecule associated to
the operator $L$ if there exist a function $b\in \cd(L^M)$ and a ball
$B$ such that

(i) $\bz=L^Mb$;

(ii) For every $k\in\{0,1,\cdots,M\}$ and $j\in {\zz}_+$, there holds
$$\|(r_B^2L)^kb\|_{L^2(U_j(B))}\le r_B^{2M}2^{-j\ez}
[V(2^jB)]^{-1/2}[\ro(V(2^jB))]^{-1},$$
where $U_j(B)$ for $j\in\zz_+$ is as in \eqref{2.5}.
\end{defn}

It is easy see that each $(\oz,M)$-atom is an $(\oz,M,\ez)$-molecule
for any $\ez\in (0,\fz)$.

\begin{prop}\label{p4.1}
Let $L$ satisfy Assumptions (A) and (B), $\oz$ satisfy Assumption
(C), $\ez>n(1/p_\oz-1/{p_\oz^+})$ and $M> \frac
n2(\frac{1}{p_\oz}-\frac 12)$. Then all $(\oz,M)$-atoms and
$(\oz,M,\ez)$-molecules are in $H_{\oz,L}(\cx)$ with norms bounded
by a positive constant.
\end{prop}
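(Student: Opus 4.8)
The strategy is to show that each $(\oz,M,\ez)$-molecule (which includes the $(\oz,M)$-atoms as a special case) satisfies $\|\bz\|_{H_{\oz,L}(\cx)}\lesssim 1$ by estimating $\cs_L(\bz)$ on the annuli $U_j(B)$ of a dilated ball. First I would write $\bz=L^Mb$ with $b\in\cd(L^M)$ supported (in the molecular sense) on the $U_j(B)$, and decompose $\cx=\bigcup_{i\ge0}U_i(B)$. For each $i$ I would estimate $\int_{U_i(B)}\oz(\cs_L(\bz)(x))\,d\mu(x)$; since $\oz$ is concave (hence of upper type $1$) and of strictly lower type $p_\oz$, together with the Jensen/Hölder trick already used in the excerpt for $T_\oz(\cx)$-atoms, it suffices to control $\|\cs_L(\bz)\|_{L^2(U_i(B))}$ together with the measure $V(2^iB)$, and then sum a geometric-type series over $i$. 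The key quantitative input is a decay estimate
\begin{equation*}
\|\cs_L(\bz)\|_{L^2(U_i(B))}\lesssim 2^{-i\dz}[V(2^iB)]^{-1/2}[\ro(V(2^iB))]^{-1}
\end{equation*}
for $i$ large, with some $\dz>0$ depending on $M$ and $\ez$; for small $i$ one uses the $L^2$-boundedness of $\cs_L$ (from \eqref{2.8}) together with $\|\bz\|_{L^2(\cx)}\lesssim [V(B)]^{-1/2}[\ro(V(B))]^{-1}$, the latter following from Definition \ref{d4.3}(ii) summed over $j$ (here $\ez>0$ makes the sum converge).

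The heart of the matter is the off-diagonal decay estimate, and this is where Lemma \ref{l2.1} enters. For $x\in U_i(B)$ with $i\ge 2$, I split the $t$-integral in $\cs_L(\bz)(x)^2$ at $t\sim 2^i r_B$. For the "near" part $t<2^i r_B$: writing $t^2Le^{-t^2L}\bz=t^2L^{M+1}e^{-t^2L}b=(t^2L)^{M+1}e^{-t^2L}(t^{-2M}b)$ and using that $b$ is supported (molecularly) away from $x$, the Davies-Gaffney estimate for $\{(t^2L)^{M+1}e^{-t^2L}\}_t$ from Lemma \ref{l2.1} gives a Gaussian factor $\exp\{-c(2^ir_B)^2/t^2\}$ against each annular piece $U_j(B)$ of $b$; one then uses the molecular size bounds on $\|(r_B^2L)^M b\|_{L^2(U_j(B))}$, the polynomial control \eqref{2.4} on $V(2^iB)/V(2^jB)$, and integrates in $t$ — the powers of $t^2/(r_B^2)$ coming from $(t^2L)^{M+1}=t^{2}r_B^{-2M}(r_B^2 L)\cdots$ combined with $M>\frac n2(\frac1{p_\oz}-\frac12)$ make the $t$-integral converge and supply part of the decay. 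For the "far" part $t\ge 2^ir_B$: here one cannot use spatial separation, so instead one exploits the cancellation $\bz=L^Mb$ directly, writing $t^2Le^{-t^2L}\bz=t^2L^{M+1}e^{-t^2L}b$ and using $\|t^2L^{M+1}e^{-t^2L}b\|_{L^2(\cx)}\lesssim t^{-2M}\|b\|_{L^2(\cx)}$ (functional calculus / spectral theorem, since $\sup_{\lz>0}\lz^{M+1}e^{-\lz}<\fz$), so that after integrating $\int_{2^ir_B}^\fz t^{-4M}\frac{dt}{t}\sim (2^ir_B)^{-4M}$ one gains the factor $2^{-4Mi}$, which beats everything. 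Combining the two ranges yields the claimed $2^{-i\dz}$ with $\dz=\min\{2M-\frac n2(\frac1{p_\oz}-\frac12),\,\ez\}>0$, up to using $\ez>n(1/p_\oz-1/{p_\oz^+})$ to absorb the growth of $[\ro(V(2^iB))]^{-1}$ relative to $[\ro(V(B))]^{-1}$ via Proposition \ref{p2.1}.

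The main obstacle I expect is bookkeeping the interaction between the molecular annuli $U_j(B)$ and the output annuli $U_i(B)$: one must sum a double series in $i$ and $j$ with Gaussian-times-polynomial weights, and simultaneously track how the Orlicz normalization factor $[\ro(V(2^iB))]^{-1}$ (which is \emph{not} a pure power of the radius when $\oz$ is a general concave function) interacts with the doubling bounds \eqref{2.3}–\eqref{2.4}. This is exactly the point where the hypotheses $\ez>n(1/p_\oz-1/{p_\oz^+})$ and $M>\frac n2(\frac1{p_\oz}-\frac12)$ are needed: the first controls the $\ro$-growth across scales via Proposition \ref{p2.1} and the upper-type index ${p_\oz^+}$, the second guarantees enough decay from the heat semigroup to run the near-part estimate. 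Once the $L^2$-decay over each $U_i(B)$ is in hand, finishing with the $\oz$-integral is routine: apply Jensen's inequality to $\oz^{-1}$ on each $U_i(B)$ exactly as in the $T_\oz(\cx)$-atom computation in Section 3, sum the resulting geometric series using the strictly-lower-type-$p_\oz$ property of $\oz$ (Remark \ref{r3.1}(i)), and conclude $\int_\cx\oz(\cs_L(\bz)(x))\,d\mu(x)\lesssim 1$, hence $\|\bz\|_{H_{\oz,L}(\cx)}\lesssim 1$ after rescaling.
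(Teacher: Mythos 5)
Your overall route is genuinely different from the paper's: the paper does not estimate $\|\cs_L\bz\|_{L^2(U_i(B))}$ directly, but instead splits $I=[I-e^{-r_B^2L}]^M+(I-[I-e^{-r_B^2L}]^M)$, applies the first operator to the annular pieces $\bz\chi_{U_j(B)}$ (writing $[I-e^{-r_B^2L}]^M$ as an $M$-fold integral of $L^Me^{-sL}$ to gain $r_B^{2M}$) and the second to $L^M[b\chi_{U_j(B)}]$ (so that $(lr_B^2L)^Me^{-lr_B^2L}$ acts on $b$), and then estimates $\cs_L$ of each piece over the annuli $U_k(2^jB)$ via Lemma \ref{l2.1}; your scheme replaces this operator splitting by a splitting of the $t$-integral in $\cs_L(\bz)(x)^2$ at $t\sim 2^ir_B$, using the representation $\bz=L^Mb$ throughout. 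That alternative can be made to work, and your use of the hypotheses on $M$ and $\ez$ is in the right places.

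There is, however, a concrete gap in your near-part estimate. You claim that Davies--Gaffney for $\{(t^2L)^{M+1}e^{-t^2L}\}_{t>0}$ produces a Gaussian factor $\exp\{-c(2^ir_B)^2/t^2\}$ ``against each annular piece $U_j(B)$ of $b$''. This is false for the pieces with $j\gtrsim i$: a molecule is not compactly supported, its annuli $U_j(B)$ with $j\ge i-1$ meet or surround $U_i(B)$, so $\dist(U_i(B),U_j(B))$ is not comparable to $2^ir_B$ (it may be zero) and no Gaussian factor is available. For those pieces your stated bound reduces to $t^{-2M}\|b\chi_{U_j(B)}\|_{L^2}\lesssim (r_B/t)^{2M}2^{-j\ez}[V(2^jB)]^{-1/2}[\ro(V(2^jB))]^{-1}$, and the integral $\int_0^{2^ir_B}(r_B/t)^{4M}\,\frac{dt}{t}$ diverges at $t\to 0$; so the step as written fails. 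The repair is easy and standard, and is exactly what the paper's splitting accomplishes automatically: for $j\gtrsim i$ abandon the representation through $b$ and estimate $\cs_L(\bz\chi_{U_j(B)})$ directly by the $L^2(\cx)$-boundedness of $\cs_L$ (from \eqref{2.8}) together with the molecular size bound $\|\bz\|_{L^2(U_j(B))}\le 2^{-j\ez}[V(2^jB)]^{-1/2}[\ro(V(2^jB))]^{-1}$; summing over $j\ge i-1$ gives the decay $2^{-i\ez}$ with the correct normalization, since $V$ and $\ro$ are nondecreasing. With this case added (and keeping your treatment of $j\le i-2$ for the near part and your spectral-theorem estimate for the far part), the claimed per-annulus decay holds with $\dz=\min\{\ez,\,2M-n(\frac1{p_\oz}-\frac12)\}>0$, and the concluding Jensen-plus-lower-type summation goes through as you describe. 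One further small caution: the paper actually proves the stronger scaled estimate \eqref{4.5}, $\int_\cx\oz(\cs_L(\lz\bz))\,d\mu\lesssim V(B)\oz\bigl(\frac{|\lz|}{V(B)\ro(V(B))}\bigr)$ for all $\lz\in\cc$, which is what gets cited later (Proposition \ref{p4.2}); if you only record $\|\bz\|_{H_{\oz,L}(\cx)}\lesssim 1$ you prove the proposition as stated, but to recover \eqref{4.5} from your per-annulus bounds you must convert the $2^iB$-normalized terms back to the $B$-normalized one, and it is precisely there that the lower type of $\ro$ and the hypothesis $\ez>n(1/p_\oz-1/p_\oz^+)$ are used, as in the paper's summation of the $\mathrm{H}_j$.
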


\begin{proof}
Since each $(\oz,M)$-atom is an $(\oz,M,\ez)$-molecule, we only need
to prove the proposition with an arbitrary  $(\oz,M,\ez)$-molecule
$\bz$ associated to a ball $B\equiv B(x_B,r_B)$.

Let $\wz p_\oz$ be as in Convention such that $\ez>n(1/p_\oz-1/{\wz
p_\oz})$ and $\lz\in\cc$. Then there exists $b\in L^2(\cx)$ such
that $\bz=L^M b$. Write
\begin{eqnarray*}&&\int_{\cx}\oz(\cs_L(\lz\bz)(x))\,d\mu(x)\\
&&\hs\le \int_{\cx}\oz(|\lz|\cs_L([I-e^{-r^2_BL}]^M\bz)(x))\,d\mu(x)
+\int_{\cx}\oz(|\lz|\cs_L((I-[I-e^{-r^2_BL}]^M)\bz)(x))\,d\mu(x)\\
&&\hs\le\sum_{j=0}^\fz \int_{\cx}\oz(|\lz|\cs_L([I-e^{-r^2_BL}]^M
(\bz\chi_{U_j(B)}))(x))\,d\mu(x)\\
&&\hs\hs+\sum_{j=0}^\fz
\int_{\cx}\oz(|\lz|\cs_L((I-[I-e^{-r^2_BL}]^M)
(L^M[b\chi_{U_j(B)}]))(x))\,d\mu(x) \equiv\sum_{j=0}^\fz
\mathrm{H}_j+\sum_{j=0}^\fz\mathrm{I}_j.
\end{eqnarray*}
Let us estimate the first term. For each $j\ge 0$, let $B_j\equiv
2^jB$ in this proof. Since $\oz$ is concave, by the Jensen
inequality and the H\"older inequality, we obtain
\begin{eqnarray*}
\mathrm{H}_j&&\le \sum_{k=0}^\fz \int_{U_k(B_j)}\oz(|\lz|
\cs_L([I-e^{-r^2_BL}]^M(\bz\chi_{U_j(B)}))(x))\,d\mu(x)\\
&&\le \sum_{k=0}^\fz
V(2^kB_j)\oz\bigg(\frac{|\lz|}{V(2^kB_j)}\int_{U_k(B_j)}
\cs_L([I-e^{-r^2_BL}]^M(\bz\chi_{U_j(B)}))(x)\,d\mu(x)\bigg)\\
&&\le \sum_{k=0}^\fz
V(2^kB_j)\oz\bigg(\frac{|\lz|}{[V(2^kB_j)]^{1/2}}
\|\cs_L([I-e^{-r^2_BL}]^M(\bz\chi_{U_j(B)}))\|_{L^2(U_k(B_j))}\bigg).
\end{eqnarray*}
For $k=0,1,2$, by the $L^2(\cx)$-boundedness of $\cs_L$ and
$e^{-r^2_BL}$,  we obtain
\begin{equation}\label{4.3}
\|\cs_L([I-e^{-r^2_BL}]^M(\bz\chi_{U_j(B)}))\|_{L^2(U_k(B_j))}\ls
\|\bz\|_{L^2(U_j(B))}.
\end{equation}
The proof of the case $k\ge 3$
involves much more complicated calculation, which is similar to the
proof of \cite[Lemma 4.2]{hm1}. We give the details for the
completeness. Write
\begin{eqnarray*}
&&\|\cs_L([I-e^{-r^2_BL}]^M
(\bz\chi_{U_j(B)}))\|^2_{L^2(U_k(B_j))}\\
&&\hs\ls \iint_{\car(U_k(B_j))}|t^2Le^{-t^2L}[I-e^{-r^2_BL}]^M
(\bz\chi_{U_j(B)})(x)|^2\frac{\,d\mu(x)\,dt}{t}\\
&&\hs\ls \int_0^\fz\int_{\rn\setminus
2^{k-2}B_j}\lf|t^2Le^{-t^2L}[I-e^{-r^2_BL}]^M
(\bz\chi_{U_j(B)})(x)\r|^2\frac{\,d\mu(x)\,dt}{t}\\
&&\hs\hs+\sum_{i=0}^{k-2}\int_{(2^{k-1}-2^i)2^jr_B}^\fz\int_{U_i(B_j)}
\cdots\frac{\,d\mu(x)\,dt}{t}\equiv \mathrm{J}+\sum_{i=0}^{k-2}\mathrm{J}_i.
\end{eqnarray*}
Using the fact that $I-e^{-r_B^2L}=\int_{0}^{r_B^2}Le^{-sL}\,ds$,
Lemma \ref{l2.1} and the Minkowski inequality, we obtain
\begin{eqnarray*}
\mathrm{J}&&=\int_0^\fz\int_{\rn\setminus
2^{k-2}B_j}\bigg|\int_0^{r_B^2}\cdots\int_0^{r_B^2}t^2L^{M+1}
e^{-(t^2+s_1+\cdots+s_M)L}\\
&&\hs \times(\bz\chi_{U_j(B)})(x)\,ds_1\cdots\,ds_M\bigg|^2\frac{\,d\mu(x)\,dt}{t}\\
&&\ls \lf\{\int_0^{r_B^2}\cdots\int_0^{r_B^2}\lf[\int_0^\fz
\frac{t^4\|\bz\|_{L^2(U_j(B))}^2}{(t^2+s_1+\cdots+s_M)^{2(M+1)}}\r.\r.\\
&&\hs\lf.\lf.\times \exp\lf\{-\frac{\dist(B_j, \rn\setminus
2^{k-1}B_j)^2}{t^2+s_1+\cdots+s_M}\r\}\frac{\,dt}{t}\r]^{1/2}
\,ds_1\cdots\,ds_M\r\}^2\\
&&\ls r_B^{4M}\|\bz\|^2_{L^2(U_j(B))}\int_0^\fz
(2^{k+j}r_B)^{-4M}\min\lf\{\frac{2^{k+j}r_B}{t },\,\frac{t}{
2^{k+j}r_B}\r\}\frac{\,dt}{t}\\
&&\ls 2^{-4M(k+j)}\|\bz\|^2_{L^2(U_j(B))}.
\end{eqnarray*}
Similarly,
\begin{eqnarray*}
\sum_{i=0}^{k-2}\mathrm{J}_i&&=\sum_{i=0}^{k-2}
\int_{U_i(B_j)}\int_{(2^{k-1}-2^i)2^jr_B}^\fz
\bigg|\int_0^{r_B^2}\cdots\int_0^{r_B^2}t^2L^{M+1}
e^{-(t^2+s_1+\cdots+s_M)L}\\
&&\hs \times(\bz\chi_{U_j(B)})(x)\,ds_1\cdots\,ds_M\bigg|^2\frac{\,d\mu(x)\,dt}{t}\\
&&\ls\sum_{i=0}^{k-2}\lf\{\int_0^{r_B^2}\cdots\int_0^{r_B^2}
\lf[\int_{2^{k+j-2}r_B}^\fz
\frac{t^4\|\bz\|_{L^2(U_j(B))}^2}{(t^2+s_1+\cdots+s_M)^{2(M+1)}}
\frac{\,dt}{t}\r]^{1/2}\,ds_1\cdots\,ds_M\r\}^2\\
&&\ls (k-2)2^{-4M(k+j)}\|\bz\|^2_{L^2(U_j(B))}.
\end{eqnarray*}
Combining the estimates of $\mathrm{J}$ and $\mathrm{J}_i$, we
obtain that
\begin{eqnarray}\label{4.4}\|\cs_L([I-e^{-r^2_BL}]^M
(\bz\chi_{U_j(B)}))\|_{L^2(U_k(B_j))}\ls
\sqrt k2^{-2M(k+j)}\|\bz\|_{L^2(U_j(B))}.
\end{eqnarray}
By Definition \ref{d4.3}, $2Mp_\oz>n(1-p_\oz/2)$, Assumption (C),
\eqref{4.3} and \eqref{4.4}, we have
\begin{eqnarray*}
\mathrm{H}_j&&\ls
V(B_j)\oz\bigg(\frac{|\lz|2^{-j\ez}}{V(B_j)\ro(V(B_j)) }\bigg)+
\sum_{k=3}^\fz V(2^kB_j)\oz\bigg(\frac{|\lz|\sqrt k
2^{-{2M(j+k)}-j\ez}}
{[V(2^kB_j)]^{1/2} [V(B_j)]^{1/2}\ro(V(B_j)) }\bigg)\\
&&\ls 2^{-jp_\oz\ez}V(B_j)\oz\bigg(\frac{|\lz|}{V(B_j)\ro(V(B_j)) }\bigg)\\
&&\hs\hs+\sum_{k=3}^\fz \sqrt
k2^{kn(1-p_\oz/2)}2^{-{2Mp_\oz(j+k)}-jp_\oz\ez}
V(B_j)\oz\bigg(\frac{|\lz|}{V(B_j)\ro(V(B_j)) }\bigg)\\
&&\ls 2^{-jp_\oz\ez} V(B_j)\oz\bigg(\frac{|\lz|}{V(B_j)\ro(V(B_j))}\bigg).
\end{eqnarray*}
Since $\ro$ is of lower type $1/{\wz p_\oz}-1$ and
$\ez>n(1/p_\oz-1/{\wz p_\oz})$, we further obtain
\begin{eqnarray*}
\sum_{j=0}^\fz \mathrm{H}_j&&\ls\sum_{j=0}^\fz 2^{-jp_\oz\ez}
V(B_j)\lf\{\frac{V(B)\ro(V(B))}{V(B_j)\ro(V(B_j))}\r\}^{p_\oz}
\oz\bigg(\frac{|\lz|}{V(B)\ro(V(B)) }\bigg)\\
&& \ls\sum_{j=0}^\fz
2^{-jp_\oz\ez}V(B_j)\lf\{\frac{V(B)}{V(B_j)}\r\}^{p_\oz/{\wz p_\oz}}
\oz\bigg(\frac{|\lz|}{V(B)\ro(V(B)) }\bigg)\\
&&\ls\sum_{j=0}^\fz 2^{-jp_\oz\ez} 2^{jn(1-p_\oz/{\wz p_\oz})}V(B)
\oz\bigg(\frac{|\lz|}{V(B)\ro(V(B)) }\bigg) \ls
V(B)\oz\bigg(\frac{|\lz|}{V(B)\ro(V(B)) }\bigg).
\end{eqnarray*}

Let us now estimate the remaining term $\{\mathrm{I}_j\}_{j\ge 0}.$
Applying the Jensen inequality, we have
\begin{eqnarray*}
\mathrm{I}_j&&\ls \sum_{k=0}^\fz
\int_{U_k(B_j)}\oz(|\lz|\cs_L((I-[I-e^{-r^2_BL}]^M)
(L^M[b\chi_{U_j(B)}]))(x))\,d\mu(x)\\
&&\ls \sum_{k=0}^\fz
V(2^kB_j)\oz\bigg(\frac{|\lz|}{[V(2^kB_j)]^{1/2}}
\|\cs_L((I-[I-e^{-r^2_BL}]^M)
(L^M[b\chi_{U_j(B)}]))\|_{L^2(U_k(B_j))}\bigg).
\end{eqnarray*}
Notice that
\begin{eqnarray*}
&&\|\cs_L((I-[I-e^{-r^2_BL}]^M)
(L^M[b\chi_{U_j(B)}]))\|_{L^2(U_k(B_j))}\\
&&\hs\ls r_B^{-2M}\sup_{1\le l\le M}\|\cs_L((lr_B^2L)^Me^{-l r_B^2L}
[b\chi_{U_j(B)}]))\|_{L^2(U_k(B_j))}.
\end{eqnarray*}
For $k=0,1,2$, by the $L^2(\cx)$-boundedness of $\cs_L$ and
$(lr_B^2L)^Me^{-l r_B^2L},$ we have
\begin{eqnarray*}
\|\cs_L((lr_B^2L)^Me^{-l r_B^2L}
[b\chi_{U_j(B)}]))\|_{L^2(U_k(B_j))}\ls \|b\|_{L^2(U_j(B))}.
\end{eqnarray*}
For $k\ge 3$, Lemma \ref{l2.1} yields that
\begin{eqnarray*}
&&\|\cs_L((lr_B^2L)^Me^{-l r_B^2L}
[b\chi_{U_j(B)}]))\|^2_{L^2(U_k(B_j))}\\
&&\hs\ls
r^{4M}_B\iint_{\car(U_k(B_j))}|t^2L^{M+1}e^{-(t^2+lr_B^2)L}
[b\chi_{U_j(B)}](x)|^2\frac{\,d\mu(x)\,dt}{t}\\
&&\hs\ls r^{4M}_B\bigg\{\int_0^\fz\int_{\rn\setminus
2^{k-2}B_j}\lf|\frac{t^2[(t^2+lr_B^2)L]^{M+1}e^{-(t^2+lr_B^2)L}
[b\chi_{U_j(B)}](x)}{(t^2+lr_B^2)^{M+1}}\r|^2\frac{\,d\mu(x)\,dt}{t}\\
&&\hs\hs+\int_{(2^{k-1}-2^i)2^jr_B}^\fz\sum_{i=0}^{k-2}\int_{U_i(B_j)}
\cdots\frac{\,d\mu(x)\,dt}{t}\bigg\}\\
&&\hs\ls r^{4M}_B \|b\|_{L^2(U_j(B))}^2\bigg[\int_0^\fz
\frac{t^4}{(t^2+lr_B^2)^{2(M+1)}}\exp\lf\{-\frac{\dist(B_j,
\rn\setminus 2^{k-1}B_j)^2}{t^2+l r_B^2}\r\}\frac{\,dt}{t}\\
&&\hs\hs+(k-2)\int_{2^{k-2+j}r_B}^\fz
\frac{\,dt}{t^{4M+1}}\bigg]\
\ls k2^{-4M(k+j)}\|b\|_{L^2(U_j(B))}^2.
\end{eqnarray*}

Combining the above estimates, similarly to the calculation of
$\mathrm{H}_j$, we obtain
\begin{eqnarray*}
\sum_{j=0}^\fz \mathrm{I}_j&&\ls
V(B)\oz\bigg(\frac{|\lz|}{V(B)\ro(V(B)) }\bigg),
\end{eqnarray*}
which further yields that
\begin{eqnarray}\label{4.5}
\int_{\cx}\oz(\cs_L(\lz\bz)(x))\,d\mu(x)\ls
V(B)\oz\bigg(\frac{|\lz|}{V(B)\ro(V(B)) }\bigg).
\end{eqnarray}
This implies that $\|\bz\|_{H_{\oz,L}(\cx)}\ls 1$, which completes
the proof of Proposition \ref{p4.1}.
\end{proof}

\subsection{Decompositions into atoms and molecules\label{s4.1}}

\hskip\parindent In what follows, let $M\in\cn$ and
$M>\frac n2(\frac1{p_\oz}-\frac 12)$, where $p_\oz$ is as in Assumption (C).
We also let $\Phi$ be as in Lemma \ref{l2.2} and
 $\Psi(t)=t^{2(M+1)}\Phi(t)$ for all $t\in (0,\fz)$.
 For all $f\in L^2_b(\cx\times(0,\fz))$ and $x\in\cx$, define
\begin{equation}\label{4.6}
\pi_{\Psi,L}f(x)\equiv C_\Psi\int_0^\fz
\Psi(t\sqrt L)(f(\cdot,t))(x)\frac{\,dt}{t},
\end{equation}
where $C_{\Psi}$ is the positive constant such that
\begin{equation}\label{4.7}
C_\Psi\int_0^\fz \Psi(t)t^2e^{-t^2}\frac{\,dt}{t}=1.
\end{equation}
Here $L^2_b(\cx\times(0,\fz))$ denotes the space of all function
$f\in L^2(\cx\times(0,\fz))$ with bounded support. Recall that a
function $f$ on $\cx\times (0,\fz)$ having bounded support means
that there exist a ball $B\subset\cx$ and $0<c_1<c_2$ such that
$\supp f\subset B\times (c_1, c_2)$.

\begin{prop}\label{p4.2} Let $L$ satisfy Assumptions (A) and (B),
$\oz$ satisfy Assumption (C), $M>\frac n2(\frac1{p_\oz}-\frac 12)$
 and $\pi_{\Psi,L}$ be as in \eqref{4.6}. Then

(i) the operator $\pi_{\Psi,L}$, initially defined on the space
$T_2^{2,b}(\cx)$, extends to a  bounded linear operator from
$T_2^2(\cx)$ to $L^2(\cx)$;

(ii) the operator $\pi_{\Psi,L}$, initially defined on the space
$T_\oz^{b}(\cx)$, extends to a bounded linear operator from $T_\oz(\cx)$ to
$H_{\oz,L}(\cx).$
\end{prop}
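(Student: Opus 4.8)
The plan is to prove (i) first, since the $L^2$-boundedness is the technically cleaner statement and it underpins the approach to (ii). For (i), I would test $\pi_{\Psi,L}$ against an arbitrary $g\in L^2(\cx)$ by duality: write
$$\la \pi_{\Psi,L}f,\, g\ra = C_\Psi\int_0^\fz \la \Psi(t\sqrt L)(f(\cdot,t)),\, g\ra\,\frac{dt}{t} = C_\Psi\int_0^\fz\int_\cx f(x,t)\,\overline{\wz\Psi(t\sqrt L)g(x)}\,d\mu(x)\,\frac{dt}{t},$$
where $\wz\Psi$ is $\overline\Psi$ (here $\Psi$ is real so $\wz\Psi=\Psi$) and I have used self-adjointness of $L$. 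By the Cauchy–Schwarz inequality in the measure $d\mu(x)\,dt/t$ on $\cx\times(0,\fz)$, followed by the pointwise bound $|f(x,t)|\le \ca(f)(\cdot)$-type control — more precisely, by the standard tent-space pairing inequality (see \cite{cms,ru}) $\int_0^\fz\int_\cx |f(x,t)||h(x,t)|\,d\mu(x)\,dt/t\ls \|\ca(f)\|_{L^2}\|\ca(h)\|_{L^2}$ applied with $h(x,t)=\Psi(t\sqrt L)g(x)$ — the problem reduces to showing $\|\ca(\Psi(\cdot\sqrt L)g)\|_{L^2(\cx)}\ls \|g\|_{L^2(\cx)}$. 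Since $\Psi\in F(\dz)$ for a suitable $\dz$ (as $\Psi(t)=t^{2(M+1)}\Phi(t)$ with $\Phi$ Schwartz and even, $\Psi$ decays and vanishes appropriately at $0$ and $\fz$), the square function estimate $\|\ca(\Psi(\cdot\sqrt L)g)\|_{L^2(\cx)}^2 \ls \int_0^\fz \|\Psi(t\sqrt L)g\|_{L^2(\cx)}^2\,\frac{dt}{t}$ (a Fubini computation using the doubling property) together with \eqref{2.8} gives the bound $\int_0^\fz|\Psi(t)|^2\,\frac{dt}{t}\,\|g\|_{L^2(\cx)}^2<\fz$. This proves (i), with a density argument from $T_2^{2,b}(\cx)$ — which is dense in $T_2^2(\cx)$ — to extend the operator.

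For (ii), the plan is to use the atomic decomposition of the tent space. Given $f\in T_\oz^b(\cx)$, by Lemma \ref{l3.1}(ii) we have $f\in T_2^{2,b}(\cx)\subset T_2^2(\cx)\cap T_\oz(\cx)$, so by Theorem \ref{t3.1} and Corollary \ref{c3.1} we may write $f=\sum_j\lz_j a_j$ with $T_\oz(\cx)$-atoms $a_j$ supported in $\widehat{B_j}$, the series converging in both $T_\oz(\cx)$ and $T_2^2(\cx)$, and $\Lambda(\{\lz_j a_j\}_j)\ls\|f\|_{T_\oz(\cx)}$. By the $L^2$-continuity from part (i), $\pi_{\Psi,L}f=\sum_j\lz_j\pi_{\Psi,L}a_j$ with convergence in $L^2(\cx)$. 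The key point is then that, up to a fixed multiplicative constant, each $\pi_{\Psi,L}a_j$ is an $(\oz,M)$-atom associated to $L$ (or a fixed dilate of $B_j$): setting $b_j\equiv C_\Psi\int_0^\fz t^{2(M+1)}\Phi(t\sqrt L)(a_j(\cdot,t))\,L^{-M}\cdots$ — more carefully, one writes $\pi_{\Psi,L}a_j=L^M\beta_j$ where $\beta_j\equiv C_\Psi\int_0^\fz (t^2)^{M+1}(t\sqrt L)^{?}\cdots$; the precise manipulation uses $\Psi(t\sqrt L)=t^{2(M+1)}\Phi(t\sqrt L)=L^M\big(t^{2(M+1)}L^{-M}\Phi(t\sqrt L)\big)$ so that $\beta_j=C_\Psi\int_0^\fz t^{2M+2}L^{-M}\Phi(t\sqrt L)(a_j(\cdot,t))\,\frac{dt}{t}$, and similarly $(r_{B_j}^2L)^k\beta_j$ is expressed via $t^{2(M-k)}(t^2L)^{k}\cdot(r_{B_j}/t)^{2k}\cdot t^{2}\Phi(t\sqrt L)$. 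The support condition (ii) of Definition \ref{d4.2} for $L^k b_j$ follows from the finite speed propagation property: by Lemma \ref{l2.2}, $K_{(t^2L)^k\Phi(t\sqrt L)}$ is supported in $\{d(x,y)\le t\}$, and since $a_j$ is supported in $\widehat{B_j}$ (so $t\le r_{B_j}$ on the relevant region), the integral lands in $B_j$ (or a fixed dilate). The size condition (iii) follows by the Minkowski integral inequality, the $L^2$-operator-norm bound $\|(t^2L)^{k-M}(t^2L)^M\Phi(t\sqrt L)\|_{L^2\to L^2}\ls 1$ appropriately arranged, and the defining $L^2$-bound of the $T_\oz(\cx)$-atom $a_j$, namely $\|a_j\|_{T_2^2(\cx)}\le [V(B_j)]^{-1/2}[\ro(V(B_j))]^{-1}$.

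Having identified $\pi_{\Psi,L}a_j=\mu_j\,\az_j$ with $\az_j$ an $(\oz,M)$-atom and $|\mu_j|\ls 1$, I would invoke Proposition \ref{p4.1}: each $(\oz,M)$-atom has $\|\az_j\|_{H_{\oz,L}(\cx)}\ls 1$. Then, using the $p_\oz$-sublinearity of $\|\cdot\|_{H_{\oz,L}(\cx)}^{p_\oz}$ from Remark \ref{r4.1}(ii) and the concavity-driven estimate bundled into the quantity $\Lambda(\{\lz_j a_j\}_j)$, one shows $\int_\cx\oz\big(\cs_L(\pi_{\Psi,L}f)(x)/\lz\big)\,d\mu(x)\le 1$ for $\lz\sim\Lambda(\{\lz_j a_j\}_j)\sim\|f\|_{T_\oz(\cx)}$; the mechanism is exactly the one in the proof of Proposition \ref{p4.1}, summing the atomic contributions $\sum_j V(B_j)\oz\big(|\lz_j|/(\lz V(B_j)\ro(V(B_j)))\big)$. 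This gives $\|\pi_{\Psi,L}f\|_{H_{\oz,L}(\cx)}\ls\|f\|_{T_\oz(\cx)}$ on $T_\oz^b(\cx)$, and since $T_\oz^b(\cx)$ is dense in $T_\oz(\cx)$ (by Lemma \ref{l3.1} and a truncation argument), the operator extends to all of $T_\oz(\cx)$. I expect the main obstacle to be the bookkeeping in the middle step — verifying rigorously that $\pi_{\Psi,L}a_j$ is (a bounded multiple of) a genuine $(\oz,M)$-atom, in particular keeping the support inside a controlled dilate of $B_j$ while simultaneously extracting the correct size estimates for all $k\in\{0,\dots,M\}$ from a single integral representation; this is where the finite speed propagation (Lemma \ref{l2.2}) and the algebraic factorization $\Psi=t^{2(M+1)}\Phi$ must be combined carefully, and where one must also confirm that the $L^2$-convergent series $\sum_j\lz_j\pi_{\Psi,L}a_j$ may be legitimately fed into $\cs_L$ term by term.
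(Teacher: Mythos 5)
Your proposal follows essentially the same route as the paper: part (i) by duality, the tent-space pairing inequality and the quadratic estimate \eqref{2.8}, plus density of $T_2^{2,b}(\cx)$; part (ii) by the atomic decomposition of $T_\oz(\cx)$ converging in both $T_\oz(\cx)$ and $T_2^2(\cx)$, the identification of $\pi_{\Psi,L}(a_j)$ as a multiple of an $(\oz,M)$-atom via the finite speed propagation property (Lemma \ref{l2.2}), and then summation using the subadditivity of $\oz$, the $L^2$-boundedness of $\cs_L$, and the uniform estimate \eqref{4.5} for atoms from Proposition \ref{p4.1}. Two pieces of bookkeeping in your sketch need repair, though neither changes the strategy. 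First, the factorization should read $\Psi(t\sqrt L)=(t\sqrt L)^{2(M+1)}\Phi(t\sqrt L)=L^M\bigl(t^{2M}\,t^2L\,\Phi(t\sqrt L)\bigr)$, so $b_j= C_\Psi\int_0^\fz t^{2M}\,t^2L\,\Phi(t\sqrt L)(a_j(\cdot,t))\frac{\,dt}{t}$; no negative powers $L^{-M}$ occur (and $L^{-M}\Phi(t\sqrt L)$ need not even be bounded). Second, for the size condition (iii) the plain Minkowski-plus-operator-norm argument does not close at $k=M$: it reduces to $\int_0^{r_{B_j}}\|a_j(\cdot,t)\|_{L^2(\cx)}\frac{\,dt}{t}$, which the purely quadratic bound $\|a_j\|_{T_2^2(\cx)}$ cannot control (Cauchy--Schwarz in $t$ produces a divergent $\int_0^{r_{B_j}}\frac{dt}{t}$). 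The paper instead tests $(r_{B_j}^2L)^kb_j$ against $h\in L^2(B_j)$ and applies \eqref{2.8} to $(t^2L)^{k+1}\Phi(t\sqrt L)h$, i.e. the same duality--square-function device you already used in part (i); with that substitution your argument matches the paper's proof.
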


\begin{proof}
(i) Suppose that $f\in T^{2,b}_2(\cx)$. For any $g\in L^2(\cx)$, by
the H\"older inequality and \eqref{2.8}, we have
\begin{eqnarray*}
|\la \pi_{\Psi,L}(f),g\ra|&&=\bigg|C_{\Psi}\int_0^\fz \la
\Psi(t\sqrt L)f, g\ra \frac{\,dt}{t}\bigg|\\
&&\ls \int_{\cx}\int_{\Gamma(x)} \bigg|f(y,t)
\Psi(t\sqrt L)g(y,t)\bigg|\frac{\,d\mu(y)}{V(y,t)}\frac{\,dt}{t}\,d\mu(x)\\
&&\ls \int_{\cx}\ca(f)(x)\ca(\Psi(t\sqrt L)g)(x)\,d\mu(x)
\ls \|f\|_{T^2_2(\cx)}\|g\|_{L^2(\cx)},
\end{eqnarray*}
which implies that $\|\pi_{\Psi,L}(f)\|_{L^2(\cx)}\ls \|f\|_{T^2_2(\cx)}$.
From this and the density of $T^{2,b}_2(\cx)$ in $T^2_2(\cx)$,
we deduce (i).

To prove (ii), let $f\in T_\oz^b(\cx)$. Then, by Lemma \ref{l3.1}(ii),
Corollary \ref{c3.1} and (i) of this proposition, we have
$$\pi_{\Psi,L}(f)=\sum_{j=1}^\fz\lz_j\pi_{\Psi,L}(a_j)\equiv\sum_{j=1}^\fz\lz_j\az_j$$
in $L^2(\cx)$, where $\{\lz_j\}_{j=1}^\fz$ and $\{a_j\}_{j=1}^\fz$ satisfy \eqref{3.2}.
Recall that here, $\supp a_j\subset \widehat B_j$ and $B_j$ is a ball of $\cx$.

On the other hand, by \eqref{2.8}, we have that the operator $\cs_L$ is
bounded on $L^2(\cx)$, which implies that for all $x\in \cx$,
$\cs_L(\pi_{\Psi,L}(f))(x)\le \sum_{j=1}^\fz|\lz_j|\cs_L(\az_j)(x).$
This, combined with the monotonicity, continuity and subadditivity of $\oz$,
yields that
$$\int_\cx \oz(\cs_L(\pi_{\Psi,L}(f))(x))\,d\mu(x)\le
\sum_{j=1}^\fz\int_\cx \oz(|\lz_j|\cs_L(\az_j)(x))\,d\mu(x).$$

We now show that $\az_j=\pi_{\Psi,L}(a_j)$ is a multiple of an
$(\oz,M)$-atom for each $j$. Let
$$b_j\equiv C_\Psi \int_0^\fz t^{2M}t^2L\Phi(t\sqrt L)(a_j(\cdot,t))\,\frac{\,dt}{t}.$$
Then $\az_j=L^M b_j$. Moreover, by Lemma \ref{l2.2}, for each $k\in
\{0,1,\cdots,M\}$, we have $\supp L^kb_j\subset B_j$. On the other
hand, for any $h\in L^2(B_j)$, by the H\"older inequality and
\eqref{2.8}, we have
\begin{eqnarray*}
&&\bigg|\int_\cx (r_{B_j}^2L)^kb_j(x)h(x)\,d\mu(x)\bigg|\\
&&\hs =C_\Psi\bigg|\int_\cx\int_0^\fz t^{2M}
(r_{B_j}^2L)^kt^2L\Phi(t\sqrt L)(a_j(\cdot,t))(x)h(x)\frac{\,d\mu(x)\,dt}{t}\bigg|\\
&&\hs\ls r_{B_j}^{2M}\int_\cx\int_0^\fz \bigg|a_j(y,t)
(t^2L)^{k+1}\Phi(t\sqrt L)h(y)\bigg|\frac{\,d\mu(y)\,dt}{t}\\
&&\hs\ls r_{B_j}^{2M}\|a_j\|_{T^2_2(\cx)}\lf(\int_\cx\int_0^\fz
|(t^2L)^{k+1}\Phi(t\sqrt L)h(y)|^2\frac{\,d\mu(y)\,dt}{t}\r)^{1/2}\\
&&\hs \ls r_{B_j}^{2M} [V(B_j)]^{-1/2}[\ro(V(B_j))]^{-1}\|h\|_{L^2(\cx)},
\end{eqnarray*}
which implies that $\az_j$ is an $(\oz,M)$-atom up to a harmless
constant.

By \eqref{4.5}, we obtain
\begin{eqnarray*}\int_{\cx}\oz(\cs_L(\pi_{\Psi,L}(f))(x))\,d\mu(x)
&&\le\sum_{j=1}^\fz\int_\cx
\oz(|\lz_j|\cs_L(\az_j)(x))\,d\mu(x)\\
&&\ls \sum_{j=1}^\fz V(B_j)\oz\lf(\frac{|\lz_j|}
{V(B_j)\ro(V(B_j))}\r),
\end{eqnarray*}
which implies that $\|\pi_{\Psi,L}(f)\|_{H_{\oz,L}(\cx)}
\ls \Lambda(\{\lz_ja_j\}_j)\ls
\|f\|_{T_\oz(\cx)},$ and hence completes the proof of Proposition
\ref{p4.2}.
\end{proof}

\begin{prop}\label{p4.3}
Let $L$ satisfy Assumptions (A) and (B), $\oz$ satisfy Assumption
(C) and $M> \frac n2(\frac{1}{p_\oz}-\frac 12)$. Then for all $f\in
H_{\oz,L}(\cx)\cap L^2(\cx)$, there exist $(\oz,M)$-atoms
$\{\az_j\}_{j=1}^\fz$ and $\{\lz_j\}_{j=1}^\fz\subset \cc$ such that
$  f=\sum_{j=1}^\fz\lz_j\az_j $ in both $H_{\oz,L}(\cx)$ and
$L^2(\cx)$. Moreover, there exists a positive constant $C$ such that
for all $f\in H_{\oz,L}(\cx)\cap L^2(\cx)$,
\begin{equation*}
\Lambda(\{\lz_j\az_j\}_j)\equiv\inf\lf\{\lz>0:\,\sum_{j=1}^\fz
V(B_j)\oz\lf(\frac{|\lz_j|}{\lz
V(B_j)\ro(V(B_j))}\r)\le1\r\}\le C\|f\|_{H_{\oz,L}(\cx)},
\end{equation*}
where for each $j$, $\az_j$ is supported in the ball $B_j$.
\end{prop}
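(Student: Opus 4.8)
The plan is to represent $f$ via a Calder\'on-type reproducing formula $f=\pi_{\Psi,L}(t^2Le^{-t^2L}f)$ and then to transplant the atomic decomposition of the tent space $T_\oz(\cx)$ (Corollary \ref{c3.1}) along $\pi_{\Psi,L}$, using that $\pi_{\Psi,L}$ is bounded both from $T_2^2(\cx)$ to $L^2(\cx)$ and from $T_\oz(\cx)$ to $H_{\oz,L}(\cx)$ (Proposition \ref{p4.2}). Fix $M$ and $\Psi(t)=t^{2(M+1)}\Phi(t)$ as in \eqref{4.6}. We may and do assume that $f\in\overline{\car(L)}$ with $\cs_L(f)\in L(\oz)$, which is what membership in $H_{\oz,L}(\cx)\cap L^2(\cx)$ amounts to, and we put $F(x,t)\equiv t^2Le^{-t^2L}f(x)$ for all $(x,t)\in\cx\times(0,\fz)$. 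Directly from the definition of $\cs_L$ in \eqref{4.1} and of the area functional $\ca$ one has $\ca(F)=\cs_L(f)$; hence $F\in T_2^2(\cx)$ with $\|F\|_{T_2^2(\cx)}=\|\cs_L(f)\|_{L^2(\cx)}\ls\|f\|_{L^2(\cx)}$ (the $L^2(\cx)$-boundedness of $\cs_L$ being a consequence of \eqref{2.8}), and $F\in T_\oz(\cx)$ with $\|F\|_{T_\oz(\cx)}=\|\cs_L(f)\|_{L(\oz)}=\|f\|_{H_{\oz,L}(\cx)}$. In particular $F\in T_2^2(\cx)\cap T_\oz(\cx)$.

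I would next establish the reproducing formula $\pi_{\Psi,L}(F)=f$, where $\pi_{\Psi,L}$ denotes the bounded extension from Proposition \ref{p4.2}(i). Since $L$ is nonnegative self-adjoint, the spectral theorem and the normalization \eqref{4.7} give, for $f\in\overline{\car(L)}\cap L^2(\cx)$, that the truncated integrals $C_\Psi\int_\ez^{1/\ez}\Psi(t\sqrt L)(t^2Le^{-t^2L}f)\frac{\,dt}{t}$ converge in $L^2(\cx)$ to $f$ as $\ez\to0$. On the other hand, writing $F_\ez$ for the truncation of $F$ to $\ez<t<1/\ez$ and to a large ball, so that $F_\ez\in T_2^{2,b}(\cx)$ and $F_\ez\to F$ in $T_2^2(\cx)$, the $L^2(\cx)$-boundedness of $\pi_{\Psi,L}$ yields $\pi_{\Psi,L}(F_\ez)\to\pi_{\Psi,L}(F)$ in $L^2(\cx)$, while \eqref{4.6} and a routine estimate show that $\pi_{\Psi,L}(F_\ez)$ and the above truncated integrals differ by a quantity tending to $0$ in $L^2(\cx)$. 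Combining these, $\pi_{\Psi,L}(F)=f$ in $L^2(\cx)$.

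With $F\in T_2^2(\cx)\cap T_\oz(\cx)$ in hand, Corollary \ref{c3.1} provides $T_\oz(\cx)$-atoms $\{a_j\}_j$, with $\supp a_j\subset\widehat{B_j}$ for balls $B_j\subset\cx$, and $\{\lz_j\}_j\subset\cc$ such that $F=\sum_j\lz_ja_j$ converges in \emph{both} $T_2^2(\cx)$ and $T_\oz(\cx)$, with $\Lambda(\{\lz_ja_j\}_j)\ls\|F\|_{T_\oz(\cx)}=\|f\|_{H_{\oz,L}(\cx)}$. Applying $\pi_{\Psi,L}$, using $\pi_{\Psi,L}(F)=f$ and the continuity of $\pi_{\Psi,L}$ from $T_2^2(\cx)$ to $L^2(\cx)$ and from $T_\oz(\cx)$ to $H_{\oz,L}(\cx)$, we obtain $f=\sum_j\lz_j\pi_{\Psi,L}(a_j)$ with convergence in both $L^2(\cx)$ and $H_{\oz,L}(\cx)$. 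It remains to recognize the summands as atoms: exactly as in the proof of Proposition \ref{p4.2}(ii), setting $b_j\equiv C_\Psi\int_0^\fz t^{2M}t^2L\Phi(t\sqrt L)(a_j(\cdot,t))\frac{\,dt}{t}$ gives $\pi_{\Psi,L}(a_j)=L^Mb_j$, with $\supp L^kb_j\subset B_j$ by Lemma \ref{l2.2} for $k\in\{0,1,\cdots,M\}$, and the size bounds of Definition \ref{d4.2}(iii) follow from \eqref{2.8} together with property (ii) of $T_\oz(\cx)$-atoms; hence $\pi_{\Psi,L}(a_j)$ is a fixed uniformly bounded multiple of an $(\oz,M)$-atom $\az_j$ supported in $B_j$. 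Absorbing these multiples into the coefficients (still denoted $\lz_j$) and using that $\oz$ is increasing to rescale, we get $\Lambda(\{\lz_j\az_j\}_j)\ls\Lambda(\{\lz_ja_j\}_j)\ls\|f\|_{H_{\oz,L}(\cx)}$, which together with $f=\sum_j\lz_j\az_j$ in both $H_{\oz,L}(\cx)$ and $L^2(\cx)$ finishes the proof.

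The main obstacle is making the atomic decomposition of $F$ pass through $\pi_{\Psi,L}$ so that the resulting series for $f$ converges \emph{simultaneously} in $L^2(\cx)$ and in $H_{\oz,L}(\cx)$; this is exactly why one needs Corollary \ref{c3.1}, which furnishes a decomposition valid in both tent-space topologies, together with the two-fold boundedness of $\pi_{\Psi,L}$ from Proposition \ref{p4.2}, and why the $L^2(\cx)$-reproducing formula $\pi_{\Psi,L}(F)=f$ should be justified with care via truncation. The remaining ingredients---the tent-space membership of $F$ and the identification of $\pi_{\Psi,L}(a_j)$ as an $(\oz,M)$-atom---are routine given \eqref{2.8}, Lemma \ref{l2.2}, and the computations already carried out for Proposition \ref{p4.2}.
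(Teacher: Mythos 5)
Your proposal is correct and follows essentially the same route as the paper: write $F=t^2Le^{-t^2L}f\in T_\oz(\cx)\cap T_2^2(\cx)$, use the $H_\fz$-functional-calculus reproducing formula $f=\pi_{\Psi,L}(F)$, invoke Corollary \ref{c3.1} to decompose $F$ into $T_\oz(\cx)$-atoms with convergence in both tent-space topologies, push the series through $\pi_{\Psi,L}$ via Proposition \ref{p4.2}, and identify $\pi_{\Psi,L}(a_j)$ as a uniform multiple of an $(\oz,M)$-atom as in the proof of Proposition \ref{p4.2}(ii). The only difference is that you spell out the truncation argument behind the reproducing formula, which the paper compresses into a citation of the functional calculus; the substance is the same.
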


\begin{proof}
Let $f\in H_{\oz,L}(\cx)\cap L^2(\cx)$. Then by $H_\fz$-functional
calculus for $L$ together with \eqref{4.7}, we have
\begin{equation*}
f=C_{\Psi}\int^\fz_0 \Psi(t\sqrt L)t^2Le^{-t^2L}f\frac{\,dt}{t}=
\pi_{\Psi,L}(t^2Le^{-t^2L}f)
\end{equation*}
in $L^2(\cx).$ By Definition \ref{d4.1} and \eqref{2.8}, we have
$t^2Le^{-t^2L}f\in T_\oz(\cx)\cap T_2^2(\cx)$. Applying Theorem
\ref{t3.1}, Corollary \ref{c3.1} and Proposition \ref{p4.2} to
$t^2Le^{-t^2L}f$, we obtain
\begin{equation*}
f=\pi_{\Psi,L}(t^2Le^{-t^2L})=\sum_{j=1}^\fz\lz_j\pi_{\Psi,L}(a_j)
\equiv\sum_{j=1}^\fz\lz_j\az_j
\end{equation*}
in both $L^2(\cx)$ and
$H_{\oz,L}(\cx)$, and
$\Lambda(\{\lz_ja_j\}_j)
\ls\|t^2Le^{-t^2L}f\|_{T_\oz(\cx)}\sim\|f\|_{H_{\oz,L}(\cx)}.$

On the other hand, by the proof of Proposition \ref{p4.2}, we obtain
that for each $j\in\cn$, $\az_j$ is an $(\oz,M)$-atom up to a
harmless constant, which completes the proof of Proposition
\ref{p4.3}.
\end{proof}

From Proposition \ref{p4.3}, similarly to the proof of
\cite[Corollary 4.1]{jy}, we deduce the following result. We omit
the details.
\begin{cor}\label{c4.1}
Let $L$ satisfy Assumptions (A) and (B), $\oz$ satisfy Assumption
(C) and $M> \frac n2(\frac{1}{p_\oz}-\frac 12)$. Then for all $f\in
H_{\oz,L}(\cx)$, there exist $(\oz,M)$-atoms $\{\az_j\}_{j=1}^\fz$
and $\{\lz_j\}_{j=1}^\fz\subset \cc$ such that
$f=\sum_{j=1}^\fz\lz_j\az_j$ in $H_{\oz,L}(\cx)$.
Moreover, there exists a positive constant $C$ independent of $f$ such that
$\Lambda(\{\lz_j\az_j\}_j)\le  C\|f\|_{H_{\oz,L}(\cx)}.$
\end{cor}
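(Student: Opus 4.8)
\textbf{Proof proposal for Corollary \ref{c4.1}.}
The plan is to pass from Proposition \ref{p4.3}, which gives the atomic decomposition only for $f\in H_{\oz,L}(\cx)\cap L^2(\cx)$, to an arbitrary $f\in H_{\oz,L}(\cx)$ by a density-plus-telescoping argument, exactly as in \cite[Corollary 4.1]{jy}. First I would invoke Remark \ref{r4.1}(iii): the subspace $\wz H_{\oz,L}(\cx)\subset L^2(\cx)$ is dense in $H_{\oz,L}(\cx)$, so I may pick a sequence $\{g_k\}_{k\in\cn}\subset H_{\oz,L}(\cx)\cap L^2(\cx)$ with $g_k\to f$ in $H_{\oz,L}(\cx)$; after passing to a subsequence I can arrange $\|g_{k+1}-g_k\|_{H_{\oz,L}(\cx)}\le 2^{-k}\|f\|_{H_{\oz,L}(\cx)}$ for $k\ge1$, and set $g_0\equiv 0$. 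Then $f=\sum_{k=0}^\fz (g_{k+1}-g_k)$ in $H_{\oz,L}(\cx)$, and each difference $g_{k+1}-g_k$ lies in $H_{\oz,L}(\cx)\cap L^2(\cx)$, so Proposition \ref{p4.3} applies to it.

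Next I would apply Proposition \ref{p4.3} to each $g_{k+1}-g_k$ to get $(\oz,M)$-atoms $\{\az_j^k\}_{j}$, coefficients $\{\lz_j^k\}_j$, and balls $\{B_j^k\}_j$ with $g_{k+1}-g_k=\sum_j\lz_j^k\az_j^k$ in $H_{\oz,L}(\cx)$ and $\Lambda(\{\lz_j^k\az_j^k\}_j)\le C\|g_{k+1}-g_k\|_{H_{\oz,L}(\cx)}\le C2^{-k}\|f\|_{H_{\oz,L}(\cx)}$ for $k\ge 1$, and $\Lambda(\{\lz_j^0\az_j^0\}_j)\le C\|g_1\|_{H_{\oz,L}(\cx)}\ls\|f\|_{H_{\oz,L}(\cx)}$. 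Re-indexing the doubly-indexed family $\{\lz_j^k\az_j^k\}_{j,k}$ as a single sequence $\{\lz_i\az_i\}_i$, I claim $f=\sum_i\lz_i\az_i$ in $H_{\oz,L}(\cx)$: the partial sums over $k\le K$ (all $j$) equal $g_{K+1}$, which converges to $f$, and the remaining tail has $H_{\oz,L}(\cx)$-norm controlled using Remark \ref{r4.1}(ii) (the $p_\oz$-subadditivity of $\|\cdot\|_{H_{\oz,L}(\cx)}$) by $\big(\sum_{k>K}\|g_{k+1}-g_k\|_{H_{\oz,L}(\cx)}^{p_\oz}\big)^{1/p_\oz}\to 0$. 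For the quantitative bound on $\Lambda(\{\lz_i\az_i\}_i)$, I would use Remark \ref{r3.1}(i) (that $[\Lambda(\cdot)]^{p_\oz}$ is subadditive over such families), which gives
\begin{equation*}
[\Lambda(\{\lz_i\az_i\}_i)]^{p_\oz}\le\sum_{k=0}^\fz[\Lambda(\{\lz_j^k\az_j^k\}_j)]^{p_\oz}\ls\sum_{k=0}^\fz 2^{-kp_\oz}\|f\|_{H_{\oz,L}(\cx)}^{p_\oz}\ls\|f\|_{H_{\oz,L}(\cx)}^{p_\oz},
\end{equation*}
and taking $p_\oz$-th roots yields $\Lambda(\{\lz_i\az_i\}_i)\le C\|f\|_{H_{\oz,L}(\cx)}$.

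The main obstacle is justifying that the re-indexed series genuinely converges to $f$ \emph{in the $H_{\oz,L}(\cx)$ norm} rather than merely as a formal rearrangement; this is where the $p_\oz$-subadditivity from Remark \ref{r4.1}(ii) and the geometric decay $\|g_{k+1}-g_k\|_{H_{\oz,L}(\cx)}\le 2^{-k}\|f\|_{H_{\oz,L}(\cx)}$ are essential, since $H_{\oz,L}(\cx)$ carries only a quasi-norm and one cannot appeal to absolute summability in the usual Banach-space sense. One should also check that grouping the terms of the re-indexed sum by the original index $k$ is legitimate for the quasi-norm, which again follows from $p_\oz$-subadditivity applied to finite partial sums and a limiting argument. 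Apart from this bookkeeping, everything is a direct transcription of the corresponding argument in \cite{jy}, so I would simply refer to that proof for the remaining routine details.
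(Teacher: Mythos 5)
Your proposal is correct and is essentially the argument the paper itself invokes (it defers to the proof of \cite[Corollary 4.1]{jy}): approximate $f$ by elements of $\wz H_{\oz,L}(\cx)\subset L^2(\cx)$ via Remark \ref{r4.1}(iii), telescope with geometric decay, apply Proposition \ref{p4.3} to each difference, and sum using the $p_\oz$-subadditivity of the quasi-norm and of $\Lambda$ from Remarks \ref{r4.1}(ii) and \ref{r3.1}(i). No substantive difference from the paper's (referenced) proof.
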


Let $H_{\oz,\,{\rm fin}}^{{\rm at},\,M}(\cx)$ and $H_{\oz,\,{\rm
fin}}^{{\rm mol},\,M,\,\ez}(\cx)$ denote the spaces of finite
combinations of $(\oz,M)$-atoms and $(\oz,M,\ez)$-molecules,
respectively. From Corollary \ref{c4.1} and Proposition \ref{p4.1},
we deduce the following density conclusions.

\begin{cor}\label{c4.2}
Let $L$ satisfy Assumptions (A) and (B), $\oz$ satisfy Assumption
(C), $\ez>n(1/p_\oz-1/{p_\oz^+})$ and $M> \frac n2(\frac{1}{p_\oz}-\frac 12)$.
Then both the spaces $H_{\oz,\,{\rm fin}}^{{\rm at},\,M}(\cx)$
and $H_{\oz,\,{\rm fin}}^{{\rm mol},\,M,\,\ez}(\cx)$
are dense in the space $H_{\oz,L}(\cx)$.
\end{cor}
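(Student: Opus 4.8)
The plan is to deduce Corollary \ref{c4.2} directly from Corollary \ref{c4.1} and Proposition \ref{p4.1}, using the quasi-norm inequality in Remark \ref{r4.1}(ii) to control tails. First I would treat the atomic case. Fix $f\in H_{\oz,L}(\cx)$. By Corollary \ref{c4.1}, write $f=\sum_{j=1}^\fz\lz_j\az_j$ in $H_{\oz,L}(\cx)$ with $(\oz,M)$-atoms $\az_j$ supported in balls $B_j$ and $\Lambda(\{\lz_j\az_j\}_j)\le C\|f\|_{H_{\oz,L}(\cx)}$. For each $N\in\cn$ set $f_N\equiv\sum_{j=1}^N\lz_j\az_j$, which belongs to $H_{\oz,\,{\rm fin}}^{{\rm at},\,M}(\cx)$ since it is a finite linear combination of $(\oz,M)$-atoms. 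Then $f-f_N=\sum_{j=N+1}^\fz\lz_j\az_j$, and it remains to show $\|f-f_N\|_{H_{\oz,L}(\cx)}\to0$ as $N\to\fz$.

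To see this, I would combine Proposition \ref{p4.1} (each $(\oz,M)$-atom lies in $H_{\oz,L}(\cx)$ with uniformly bounded norm, which needs $M>\frac n2(\frac1{p_\oz}-\frac12)$, and this is part of the hypothesis) with the $p_\oz$-subadditivity of $\|\cdot\|_{H_{\oz,L}(\cx)}^{p_\oz}$ from Remark \ref{r4.1}(ii). Concretely,
\begin{equation*}
\|f-f_N\|_{H_{\oz,L}(\cx)}^{p_\oz}\le\sum_{j=N+1}^\fz|\lz_j|^{p_\oz}\|\az_j\|_{H_{\oz,L}(\cx)}^{p_\oz}\ls\sum_{j=N+1}^\fz|\lz_j|^{p_\oz}.
\end{equation*}
By Remark \ref{r3.1}(ii) (applied in the $H_{\oz,L}(\cx)$ setting, exactly as for tent spaces, since $\oz$ is of upper type $1$), the full sum $\sum_{j=1}^\fz|\lz_j|\ls\Lambda(\{\lz_j\az_j\}_j)<\fz$, hence $\sum_j|\lz_j|^{p_\oz}<\fz$ as well (using $p_\oz\le1$ together with $|\lz_j|\to0$, or directly from finiteness of $\sum_j|\lz_j|$ and boundedness of the terms), so the tail tends to $0$. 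This gives $f_N\to f$ in $H_{\oz,L}(\cx)$, proving density of $H_{\oz,\,{\rm fin}}^{{\rm at},\,M}(\cx)$.

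For the molecular case, I would simply observe that every $(\oz,M)$-atom is an $(\oz,M,\ez)$-molecule for any $\ez\in(0,\fz)$ (as noted just before Proposition \ref{p4.1}), so $H_{\oz,\,{\rm fin}}^{{\rm at},\,M}(\cx)\subset H_{\oz,\,{\rm fin}}^{{\rm mol},\,M,\,\ez}(\cx)\subset H_{\oz,L}(\cx)$, where the last inclusion uses Proposition \ref{p4.1} with the stated restrictions $\ez>n(1/p_\oz-1/{p_\oz^+})$ and $M>\frac n2(\frac1{p_\oz}-\frac12)$. Since the smaller space is already dense, so is the larger one. The only mildly delicate point — the ``main obstacle'' such as it is — is making sure the tail estimate is legitimate: one must know that the atomic decomposition from Corollary \ref{c4.1} converges in $H_{\oz,L}(\cx)$ (which it does, by that corollary's statement) and that $\Lambda$ controls $\sum_j|\lz_j|$ from above, which is Remark \ref{r3.1}(ii); everything else is routine, so I would omit the computational details and refer to the proof of \cite[Corollary 4.1]{jy} for the analogous argument.
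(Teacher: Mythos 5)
Your overall route is the paper's: the paper states this corollary as an immediate consequence of exactly Corollary \ref{c4.1} and Proposition \ref{p4.1}, which is what you use. However, the tail estimate you insert contains a genuine error: from $\sum_j|\lz_j|\ls\Lambda(\{\lz_j\az_j\}_j)<\fz$ it does \emph{not} follow that $\sum_j|\lz_j|^{p_\oz}<\fz$ when $p_\oz<1$. Since $|\lz_j|\to 0$, one has $|\lz_j|^{p_\oz}\ge|\lz_j|$ for all large $j$, so the implication goes the wrong way; for instance $\lz_j=j^{-2}$ with $p_\oz=1/2$ gives $\sum_j|\lz_j|<\fz$ while $\sum_j|\lz_j|^{p_\oz}=\sum_j j^{-1}=\fz$. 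Nor does $\Lambda(\{\lz_j\az_j\}_j)$ control $\sum_j|\lz_j|^{p_\oz}$ in general, because of the weights $V(B_j)\ro(V(B_j))$ in its definition. So your displayed inequality does not show that the tail tends to zero.

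Fortunately that step is also superfluous, and the rest of your argument is sound. Corollary \ref{c4.1} asserts that $f=\sum_{j=1}^\fz\lz_j\az_j$ \emph{in} $H_{\oz,L}(\cx)$, which by definition means the partial sums $f_N=\sum_{j=1}^N\lz_j\az_j$ converge to $f$ in $\|\cdot\|_{H_{\oz,L}(\cx)}$; since each $f_N\in H_{\oz,\,{\rm fin}}^{{\rm at},\,M}(\cx)$, the density of the finite atomic space is immediate, with no quantitative tail bound needed. Your molecular case is fine: every $(\oz,M)$-atom is an $(\oz,M,\ez)$-molecule, and Proposition \ref{p4.1} (with $\ez>n(1/p_\oz-1/p_\oz^+)$ and $M>\frac n2(\frac1{p_\oz}-\frac12)$) gives $H_{\oz,\,{\rm fin}}^{{\rm mol},\,M,\,\ez}(\cx)\subset H_{\oz,L}(\cx)$, so the larger subspace is dense as well. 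Deleting the erroneous tail computation and replacing it by the direct appeal to what convergence in Corollary \ref{c4.1} means yields precisely the paper's intended deduction.
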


\subsection{Dual spaces of  Orlicz-Hardy spaces\label{s4.2}}

\hskip\parindent In this subsection, we study the dual space of the Orlicz-Hardy
 space $H_{\oz,L}(\cx)$. We begin with some notions.

Let $\phi=L^M\nu$ be a function in $L^2(\cx)$, where $\nu\in\cd(L^M)$.
Following \cite{hm1,hlmmy}, for $\ez>0$, $M\in\cn$ and
fixed $x_0\in\cx$, we introduce the space
$$\cm_\oz^{M,\ez}(L)\equiv \{\phi=L^M\nu\in L^2(\cx):
\ \|\phi\|_{\cm_\oz^{M,\ez}(L)}<\fz\},$$
where
$$\|\phi\|_{\cm_\oz^{M,\ez}(L)}\equiv \sup_{j\in\zz_+}
\lf\{2^{j\ez}[V(x_0,2^j)]^{1/2}
\ro(V(x_0,2^j))\sum_{k=0}^M\|L^k\nu\|_{L^2(U_j(B(x_0,1)))}\r\}.$$

Notice that if $\phi\in \cm_\oz^{M,\ez}(L)$ for some $\ez>0$ with
norm 1, then $\phi$ is an
$(\oz,M,\ez)$-molecule adapted to the ball $B(x_0,1)$. Conversely,
if $\bz$ is an $(\oz,M,\ez)$-molecule adapted to any ball, then
$\bz\in \cm_\oz^{M,\ez}(L)$.

Let $A_t$ denote either $(I+t^2L)^{-1}$ or $e^{-t^2L}$ and $f\in
(\cm_\oz^{M,\ez}(L))^\ast$, the dual space of $\cm_\oz^{M,\ez}(L)$.
We claim that $(I-A_t)^Mf\in L^2_{\loc}(\cx)$ in the sense of
distributions. In fact, for any ball $B$, if $\psi\in L^2(B)$, then
it follows from the Davies-Gaffney estimate \eqref{2.6} that
$(I-A_t)^M\psi\in \cm_\oz^{M,\ez}(L)$ for every $\ez>0$. Thus,
\begin{eqnarray*}
|\la (I-A_t)^Mf,\psi\ra|\equiv|\la f,(I-A_t)^M\psi\ra|\le
C(t,r_B,\dist(B,x_0))\|f\|_{(\cm_\oz^{M,\ez}(L))^\ast}\|\psi\|_{L^2(B)},
\end{eqnarray*}
which implies that $(I-A_t)^Mf\in L^2_{\loc}(\cx)$ in the sense of distributions.

Finally, for any $M\in \cn$, define
$$\cm_\oz^M(\cx)\equiv \bigcap_{\ez>n(1/p_\oz-1/p_\oz^{+})}
(\cm_\oz^{M,\ez}(L))^\ast.$$

\begin{defn}\label{d4.4}
Let $L$ satisfy Assumptions (A) and (B), $\oz$ satisfy Assumption (C),
 $\ro$ be as in \eqref{2.10} and  $M>\frac n2(\frac{1}{p_\oz}-\frac 12)$.
 A functional  $f\in\cm_\oz^M(\cx)$ is said to be in $\bmoo$ if
\begin{equation*}
\|f\|_{\bmoo}\equiv\sup_{B\subset\cx}\frac{1}{\ro(V(B))}\lf[\frac{1}{V(B)}\int_B
|(I-e^{-r_B^2L})^Mf(x)|^2 \,d\mu(x)\r]^{1/2}< \fz,
\end{equation*}
where the supremum is taken over all ball $B$ of $\cx.$
\end{defn}

The proofs of the following two propositions are similar to those
of Lemmas 8.1 and 8.3 of \cite{hm1}, respectively; we omit
the details.

\begin{prop}\label{p4.4}
Let $L$, $\oz$, $\ro$ and $M$ be as in Definition \ref{d4.4}.
Then $f\in \bmoo$ if and only if $f\in\cm_\oz^M(\cx)$ and
\begin{equation*}
\sup_{B\subset\cx}\frac{1}{\ro(V(B))}\lf[\frac{1}{V(B)}\int_B
|(I-(I+r_B^2L)^{-1})^Mf(x)|^2 \,d\mu(x)\r]^{1/2}<\fz.
\end{equation*}
Moreover, the quantity appeared in the left-hand side of the above formula
is equivalent to $\|f\|_{\bmoo}$.
\end{prop}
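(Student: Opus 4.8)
The plan is to prove that the two quantities in Proposition \ref{p4.4} are comparable by controlling each side in terms of the other, using the fact that both $e^{-r_B^2L}$ and $(I+r_B^2L)^{-1}$ are ``averaging'' operators at scale $r_B$ that differ by an operator whose kernel has fast decay in the distance. The key algebraic identity is that the difference $e^{-r_B^2L}-(I+r_B^2L)^{-1}$ can be expressed through functional calculus as a superposition of heat and resolvent operators at comparable scales; concretely one writes, for instance, $I-e^{-r_B^2L}=(I-(I+r_B^2L)^{-1})\,g(r_B^2L)$ for a suitable bounded function $g$, or more directly uses the binomial-type expansion $(I-A_t)^M-(I-B_t)^M$ into a sum of products where each summand contains at least one factor of $A_t-B_t$. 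Then one bounds $\|(A_t-B_t)h\|$ with Davies-Gaffney-type off-diagonal estimates (Lemma \ref{l2.1}), which give exponential decay in $\dist(E,F)^2/(C_3 t^2)$.

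First I would fix a ball $B=B(x_B,r_B)$ and, to estimate $(I-(I+r_B^2L)^{-1})^Mf$ on $B$ in terms of the heat-semigroup quantity, decompose $f=\sum_{i\ge 0}f\chi_{U_i(B)}$ as is done repeatedly in the proof of Proposition \ref{p4.1}. For the ``local'' part $i=0,1,2$, use that both operators are bounded on $L^2$, reducing to a comparison on $4B$; here one writes $(I-(I+r_B^2L)^{-1})^M = (I-e^{-r_B^2L})^M + [\text{error}]$ and the error is handled by the off-diagonal decay of $e^{-r_B^2L}-(I+r_B^2L)^{-1}$ acting on $L^2(4B)$, giving a sum over annuli of $f$ that converges because $\ro$ has a definite lower type (Proposition \ref{p2.1}) and so the volume factors $[V(2^kB)]^{1/2}\ro(V(2^kB))$ grow. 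For the ``far'' pieces $i\ge 3$, one applies Lemma \ref{l2.1} to the composite operator $(I-(I+r_B^2L)^{-1})^M$ (which is a finite sum of operators $(r_B^2L)^j(I+r_B^2L)^{-k}$, each Davies-Gaffney at scale $r_B$) together with the same annular decomposition, obtaining $\|(I-(I+r_B^2L)^{-1})^Mf\|_{L^2(U_i(B))}\ls 2^{-i\ez'}$ for a suitable $\ez'$, and then summing as above. The reverse inequality is entirely symmetric, with the roles of $e^{-r_B^2L}$ and $(I+r_B^2L)^{-1}$ interchanged; one also needs, for each direction, that membership of $f$ in $\cm_\oz^M(\cx)$ is intrinsic (it does not reference which averaging operator is used), which follows from the remarks preceding Definition \ref{d4.4} since Davies-Gaffney holds for both families.

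The main obstacle I anticipate is the bookkeeping in the far pieces $i\ge 3$: one must carry out the Minkowski-inequality manipulation, the splitting of the $t$-integral into the region $t\le 2^{i+?}r_B$ and its complement, and the resulting geometric sums in a way that produces exactly the bound $\ls \ro(V(B))[V(B)]^{1/2}\|f\|$ needed, uniformly in $B$; this is where the precise interplay between $M$, $n$, $p_\oz$, $\ro$'s lower type, and $\ez>n(1/p_\oz-1/p_\oz^+)$ enters, and it parallels the estimates for $\mathrm{J}$ and $\mathrm{J}_i$ in the proof of Proposition \ref{p4.1}. Once these off-diagonal estimates are in place, the equivalence of the two suprema is immediate. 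Since this argument is a routine adaptation of \cite[Lemma 8.1]{hm1} to the Orlicz setting, with $\ro(V(B))$ replacing the $L^p$-normalization and the strictly-lower-type hypotheses on $\oz$ (equivalently, lower-type hypotheses on $\ro$) replacing the classical homogeneity, I would present it tersely, referring to \cite{hm1} for the parts that transfer verbatim and spelling out only the places where $\ro$ and Assumption (C) are used.
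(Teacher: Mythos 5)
Your overall template (compare $(I-e^{-r_B^2L})^M$ with $(I-(I+r_B^2L)^{-1})^M$ through the functional calculus, off-diagonal estimates, and an annular summation, following \cite[Lemma 8.1]{hm1}) is indeed what the paper intends, since the paper omits the proof and simply cites that lemma. But the way you propose to run the estimates contains a genuine gap: you decompose $f=\sum_{i\ge 0}f\chi_{U_i(B)}$ and claim, via Lemma \ref{l2.1}, that $\|(I-(I+r_B^2L)^{-1})^Mf\|_{L^2(U_i(B))}\ls 2^{-i\ez'}$. An element $f$ of $\bmoo$ is only a functional in $\cm_\oz^M(\cx)$; $f$ itself need not be a locally square-integrable function (only $(I-A_t)^Mf$ is, as noted before Definition \ref{d4.4}), and nothing in the hypotheses controls $\|f\|_{L^2(U_i(B))}$, so the pieces $f\chi_{U_i(B)}$ carry no quantitative information and cannot be fed into Davies--Gaffney bounds. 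Moreover the asserted decay is false even in spirit: the correct bound for $\|(I-(I+r_B^2L)^{-1})^Mf\|_{L^2(2^iB)}$, obtained by covering $2^iB$ with balls of radius $r_B$ and applying the hypothesis on each together with \eqref{2.3}, \eqref{2.4} and the upper type $1/p_\oz-1$ of $\ro$, \emph{grows} polynomially in $2^i$ (roughly like $2^{i[n/2+N/2+N(1/p_\oz-1)]}[V(B)]^{1/2}\ro(V(B))$). The same defect appears in your local part $i=0,1,2$, where you apply $L^2$-boundedness to $f\chi_{4B}$. In fact, as written your argument never uses the finiteness of the resolvent quantity on any ball other than $B$ itself, yet that supremum over \emph{all} balls is the only quantitative information available about $f$.

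The repair, which is how \cite[Lemma 8.1]{hm1} actually proceeds, is to never let the off-diagonal estimates act on $f$: one writes, e.g., $(I-e^{-r_B^2L})^M=\varphi(r_B^2L)\,(I-(I+r_B^2L)^{-1})^M$ with $\varphi(x)=[(1-e^{-x})(1+x)/x]^M$, notes that $\varphi(r_B^2L)$ is an $M$-fold product of operators of the form $\int_0^1\{e^{-sr_B^2L}+r_B^2Le^{-sr_B^2L}\}\,ds$ and hence is $L^2(\cx)$-bounded with strong off-diagonal decay at scale $r_B$ (Lemma \ref{l2.1}), and then applies the annular decomposition to the controlled, locally $L^2$ function $g\equiv(I-(I+r_B^2L)^{-1})^Mf$ rather than to $f$; the off-diagonal decay of $\varphi(r_B^2L)$ must beat the polynomial growth of $\|g\|_{L^2(2^iB)}$ described above, which is exactly where the size of $M$ and the type of $\ro$ interact. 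The converse direction uses the symmetric identity built from $(I+r_B^2L)^{-1}=\int_0^\fz e^{-s}e^{-sr_B^2L}\,ds$, again applied so that the hypothesis is invoked on (coverings by) balls at the appropriate scales. So the architecture of your proposal is salvageable, but the objects entering the off-diagonal estimates and the annular sums must be $(I-(I+r_B^2L)^{-1})^Mf$, respectively $(I-e^{-r_B^2L})^Mf$, never pieces of $f$ itself.
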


\begin{prop}\label{p4.5}
Let $L$, $\oz$, $\ro$ and $M$ be as in Definition \ref{d4.4}. Then
there exists a positive constant $C$ such that for all $f\in \bmoo$,
$$\sup_{B\subset \cx}\frac{1}{\ro(V(B))}\lf[\frac{1}{V(B)}\int_{\widehat B}
|(t^2L)^Me^{-t^2L}f(x)|^2 \frac{\,d\mu(x)\,dt}{t}\r]^{1/2}\le C\|f\|_{\bmoo}.$$
\end{prop}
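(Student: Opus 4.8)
The plan is to follow the blueprint of the proof of \cite[Lemma 8.3]{hm1}, adapted to the present Orlicz setting. Fix a ball $B=B(x_B,r_B)\subset\cx$. Since $\widehat B\subset\{(x,t):\ x\in B,\ 0<t\le r_B\}$, it suffices to prove that
$$\iint_{\widehat B}|(t^2L)^Me^{-t^2L}f(x)|^2\,\frac{d\mu(x)\,dt}{t}\lesssim V(B)[\ro(V(B))]^2\|f\|_{\bmoo}^2.$$
Using the observations preceding Definition \ref{d4.4} (which guarantee that $(I-A_t)^Mf\in L^2_{\loc}(\cx)$ and that the manipulations below are legitimate when tested against elements of $\cm_\oz^{M,\ez}(L)$), I would split $f=f_1+f_2$ with $f_1\equiv(I-e^{-r_B^2L})^Mf$ and $f_2\equiv[I-(I-e^{-r_B^2L})^M]f=\sum_{k=1}^M\binom Mk(-1)^{k+1}e^{-kr_B^2L}f$, and estimate the two contributions separately.

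For the $f_1$-term, decompose $f_1=\sum_{i\ge0}f_1\chi_{U_i(B)}$. For $i\in\{0,1,2\}$, the $L^2(\cx)$-boundedness of $(t^2L)^Me^{-t^2L}$ coming from \eqref{2.8} bounds the corresponding piece by $\|f_1\|_{L^2(U_i(B))}^2$; for $i\ge3$, since $(x,t)\in\widehat B$ forces $x\in B$, $t\le r_B$ and $\dist(B,U_i(B))\sim2^ir_B$, Lemma \ref{l2.1} supplies the factor $\exp\{-c4^ir_B^2/t^2\}$, whose integral $\int_0^{r_B}\exp\{-c4^ir_B^2/t^2\}\,dt/t$ decays faster than any power of $2^{-i}$. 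In every case one invokes the telescoping estimate that, for all $i\in\zz_+$,
$$\Big(\frac1{V(2^iB)}\int_{2^iB}|(I-e^{-r_B^2L})^Mf(x)|^2\,d\mu(x)\Big)^{1/2}\lesssim(i+1)\,\ro(V(2^iB))\,\|f\|_{\bmoo},$$
together with $V(2^iB)\le C2^{in}V(B)$ and the upper type of $\ro$ from Proposition \ref{p2.1}; the resulting series in $i$ converges and is controlled by $V(B)[\ro(V(B))]^2\|f\|_{\bmoo}^2$.

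For the $f_2$-term, write $(t^2L)^Me^{-t^2L}f_2=\sum_{k=1}^M c_k(t^2L)^Me^{-(t^2+kr_B^2)L}f$, and on $\widehat B$, where $t\le r_B$, set $\tau^2\equiv t^2+kr_B^2$, so that $\tau\sim r_B$ and $(t^2L)^Me^{-(t^2+kr_B^2)L}f=(t/\tau)^{2M}(\tau^2L)^Me^{-\tau^2L}f$ with $(t/\tau)^{2M}\lesssim(t/r_B)^{2M}$. Since $\int_0^{r_B}(t/r_B)^{4M}\,dt/t<\fz$, it remains to bound $\int_B|(\tau^2L)^Me^{-\tau^2L}f(x)|^2\,d\mu(x)$ by $V(B)[\ro(V(B))]^2\|f\|_{\bmoo}^2$ uniformly for $\tau\sim r_B$. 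For this I would factor, via functional calculus, $(\tau^2L)^Me^{-\tau^2L}=\Theta(\tau\sqrt L)(I-e^{-\tau^2L})^M$, where $\Theta(\lz)\equiv\lz^{2M}e^{-\lz^2}(1-e^{-\lz^2})^{-M}$ extends to an even Schwartz function on $\rr$, so that (via the finite speed propagation property of $\cos(s\sqrt L)$, cf. Lemma \ref{l2.2}) $\Theta(\tau\sqrt L)$ has off-diagonal bounds with arbitrarily fast decay; one then repeats the annular decomposition and telescoping argument of the previous paragraph applied to the $L^2_{\loc}(\cx)$ function $(I-e^{-\tau^2L})^Mf$.

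The main obstacle is the telescoping estimate itself: to compare $(I-e^{-r_B^2L})^Mf$ on the dilated ball $2^iB$ with the directly controlled quantity $(I-e^{-(2^ir_B)^2L})^Mf$, one telescopes scale by scale and must show that each difference $(I-e^{-(2^js)^2L})^M-(I-e^{-(2^{j+1}s)^2L})^M$ carries off-diagonal decay at scale $2^js$ (by writing the difference of heat semigroups as $\int Le^{-uL}\,du$ and applying Lemma \ref{l2.1}), and then sum the $O(\log(2^ir_B/r_B))$ terms against the growth of $V$ and $\ro$; this is essentially where the hypotheses on $M$ and on the type of $\ro$ are used to force the various series to converge. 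A secondary technical point is checking throughout that the operators act on the functional $f\in\cm_\oz^M(\cx)$ and that all pairings are well defined, which is handled exactly as in \cite{hm1}.
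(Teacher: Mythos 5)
Your proposal takes essentially the same route as the paper, which omits the proof and simply refers to \cite[Lemma 8.3]{hm1}: the splitting $f=(I-e^{-r_B^2L})^Mf+[I-(I-e^{-r_B^2L})^M]f$, the annular decomposition with Davies--Gaffney and the quadratic estimate \eqref{2.8} for the first piece, and the scale-shift $t^2\mapsto t^2+kr_B^2$ together with a bounded even function of $\sqrt L$ having off-diagonal decay for the second piece, all weighted by $\ro(V(\cdot))$, is exactly the intended adaptation to the Orlicz setting. The auxiliary estimate you flag (controlling $(I-e^{-r_B^2L})^Mf$ on dilated balls by the $\bmoo$ norm) is the analogue of \cite[Lemma 8.2]{hm1} and goes through as you sketch, so I see no essential gap.
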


The following Proposition \ref{p4.6} and Corollary \ref{c4.3} are a
kind of Calder\'on reproducing formulae.

\begin{prop}\label{p4.6}Let $L$, $\oz$, $\ro$ and $M$
be as in Definition \ref{d4.4},
$\ez>0$ and $\wz M>M+\ez+\frac n4+ \frac N2 (\frac 1{p_\oz}-1)$,
where $N$ is as in \eqref{2.4}.
Fix $x_0\in\cx$. Assume that $f\in \cm_\oz^M(\cx)$ satisfies
\begin{equation}\label{4.8}
\int_{\cx}\frac{|(I-(I+L)^{-1})^Mf(x)|^2}
{1+[d(x,x_0)]^{n+\ez+2N(1/p_\oz-1)}}\,d\mu(x)<\fz.
\end{equation}
Then for all $(\oz,\wz M)$-atom $\az$,
\begin{equation*}
\la f, \az\ra =\wz C_M\int_{\cx\times(0,\fz)} (t^2L)^Me^{-t^2L}f(x)
\overline{t^2Le^{-t^2L}\az(x)}\frac{\,d\mu(x)\,dt}{t},
\end{equation*}
where $\wz C_M$ is the positive constant satisfying $\wz
C_M\int_0^\fz t^{2(M+1)}e^{-2t^2}\frac{\,dt}{t}=1.$
\end{prop}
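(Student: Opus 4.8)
The plan is to establish the reproducing formula by a duality/density argument, first proving it on a dense subclass where everything converges in $L^2$, and then passing to the general case. First I would note that since $\wz M > M$, every $(\oz,\wz M)$-atom $\az$ is in particular an $(\oz,M,\ez)$-molecule (indeed an $(\oz,M)$-atom after adjusting constants), so it lies in $\cm_\oz^{M,\ez}(L)$ and the pairing $\la f,\az\ra$ makes sense for $f\in\cm_\oz^M(\cx)$. Writing $\az = L^{\wz M}b$ with $b\in\cd(L^{\wz M})$ supported in a ball $B$, I would use the $H_\fz$-functional calculus identity
\begin{equation*}
I = \wz C_M\int_0^\fz (t^2L)^{M+1}e^{-2t^2L}\,\frac{\,dt}{t}
\end{equation*}
applied to $\az$ (valid on $\overline{\car(L)}$, and $\az\in\overline{\car(L)}$ since $\az=L^{\wz M}b$), which formally gives
\begin{equation*}
\la f,\az\ra = \wz C_M\int_0^\fz \la f, (t^2L)^{M+1}e^{-2t^2L}\az\ra\,\frac{\,dt}{t}
= \wz C_M\int_0^\fz \la (t^2L)^Me^{-t^2L}f, t^2Le^{-t^2L}\az\ra\,\frac{\,dt}{t},
\end{equation*}
where in the last step I move $M$ copies of $t^2L$ and one copy of $e^{-t^2L}$ onto $f$ using self-adjointness of $L$ (Assumption (A)); this is legitimate once the integral is shown to converge absolutely, and $(t^2L)^Me^{-t^2L}f$ is interpreted via the fact noted before Definition \ref{d4.4} that $(I-A_t)^Mf\in L^2_\loc(\cx)$, more precisely one first justifies that $(t^2L)^Me^{-t^2L}f$ is a well-defined locally $L^2$ function for $f\in\cm_\oz^M(\cx)$ satisfying \eqref{4.8}.

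The technical heart is the convergence and the justification of the formal manipulation. I would split $\int_0^\fz = \int_0^1 + \int_1^\fz$. For the ``large $t$'' part, I use that $\az$ has the cancellation $\az=L^{\wz M}b$, so $t^2Le^{-t^2L}\az = t^{-2\wz M}(t^2L)^{\wz M+1}e^{-t^2L}b$ decays like $t^{-2\wz M}$ in $L^2$, combined with the Davies-Gaffney decay of $(t^2L)^Me^{-t^2L}f$ against the growth allowed by \eqref{4.8}; the hypothesis $\wz M > M+\ez+\frac n4+\frac N2(\frac1{p_\oz}-1)$ is exactly what is needed to beat the volume growth $2^{jn}$ and the weight growth coming from $\ro(V(x_0,2^j))$ (controlled via Proposition \ref{p2.1} and \eqref{2.4}) in the annular sum over $U_j(B)$. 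For the ``small $t$'' part, I use the $L^2$-boundedness of $(t^2L)^Me^{-t^2L}$ and the molecular/atomic estimates on $\az$ together with condition \eqref{4.8} to get a bound uniform down to $t=0$, so that Fubini applies and the $t\to0^+$ limit of $e^{-t^2L}(\cdots)\az$ recovers $\az$ in the relevant pairing. To make the manipulation rigorous I would first prove the identity for $f$ in a dense subspace — e.g.\ $f\in\cm_\oz^M(\cx)\cap L^2(\cx)$, or a truncation $f_R$ — where one can legitimately apply the $H_\fz$-calculus in $L^2(\cx)$ and integrate by parts, and then pass to the limit using the uniform bounds; alternatively, approximate $\az$ by $\az_\delta = \wz C_M\int_\delta^{1/\delta}(t^2L)^{M+1}e^{-2t^2L}\az\,\frac{dt}t\to\az$ in $L^2$ and check $\la f,\az_\delta\ra\to\la f,\az\ra$ using $\az_\delta-\az$ staying bounded in $\cm_\oz^{M,\ez}(L)$.

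The main obstacle I anticipate is controlling the pairing $\la (t^2L)^Me^{-t^2L}f, t^2Le^{-t^2L}\az\ra$ with the correct uniform-in-$t$ and summable-in-$j$ bounds: one must carefully track, in the annular decomposition of $\cx$ around $B$, the competition between the exponential Davies-Gaffney gain $\exp\{-d^2/Ct^2\}$ from Lemma \ref{l2.1}, the polynomial-in-$t$ decay $t^{-2\wz M}$ coming from the cancellation of $\az$, the doubling volume growth \eqref{2.3}, the extra factor $(1+d(x,x_0)/r)^N$ from \eqref{2.4}, and the growth of $\ro(V(x_0,2^j))^{-1}$ (equivalently $\oz^{-1}$) allowed in the definition of $\cm_\oz^{M,\ez}(L)$ and permitted by \eqref{4.8}. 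This is a somewhat delicate but essentially routine estimate of the same flavor as the proof of Proposition \ref{p4.1}, and the choice of $\wz M$ in the hypothesis is precisely calibrated to close it; this is why the statement needs $\wz M$ strictly larger than $M$. Once these estimates are in hand, Fubini and the self-adjoint functional calculus yield the claimed identity, with $\wz C_M$ fixed by the normalization $\wz C_M\int_0^\fz t^{2(M+1)}e^{-2t^2}\frac{dt}t=1$.
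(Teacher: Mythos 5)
Your outline gets the right formula and even anticipates the truncation $\int_\dz^R$ and the role of the size condition on $\wz M$, but it misses the one idea that actually makes the paper's proof close, and the steps you propose in its place have a genuine gap. The only quantitative information available about $f$ is that it is a bounded functional on $\cm_\oz^{M,\ez}(L)$ together with the weighted $L^2$ bound \eqref{4.8}, and \eqref{4.8} controls $(I-(I+L)^{-1})^M f$, \emph{not} $(t^2L)^M e^{-t^2L}f$. Your central estimate asks for ``the Davies--Gaffney decay of $(t^2L)^M e^{-t^2L}f$'' paired against $t^2Le^{-t^2L}\az$: Davies--Gaffney bounds apply to the operators acting on $L^2$ functions, and $f$ is merely a distribution in $\cm_\oz^M(\cx)$, so no such decay is available (a bound of this type is exactly the content of Proposition \ref{p4.5}, but only for $f\in\bmoo$, which is not assumed here). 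Likewise, your fallback arguments do not close the gap: $\cm_\oz^M(\cx)\cap L^2(\cx)$ is not known to be dense in any topology in which both sides of the identity pass to the limit, and for the approximation $\az_\dz\to\az$ it is not enough that $\az-\az_\dz$ stays \emph{bounded} in $\cm_\oz^{M,\ez}(L)$ --- you need $\la f,\az-\az_\dz\ra\to 0$, and membership of $f$ in the dual space gives no way to see this without further structure.

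The missing idea in the paper is purely algebraic and is what converts the error term into something \eqref{4.8} can see: writing $\az=L^{\wz M}b$ and expanding $L^M=\lf(L-L(I+L)^{-1}+L(I+L)^{-1}\r)^M$, one gets $\az=\sum_{k=0}^M C_M^k (I-(I+L)^{-1})^M L^{\wz M-k}b$, and the same manipulation applies to $\az-\wz C_M\int_\dz^R (t^2L)^{M+1}e^{-2t^2L}\az\,\frac{dt}{t}$. Thus the error pairing becomes a sum of pairings of the locally $L^2$ function $(I-(I+L)^{-1})^M f$ (weighted by \eqref{4.8}) against explicit $L^2$ functions built from $b$, and a weighted Cauchy--Schwarz over the annuli $U_j(B)$, together with Davies--Gaffney applied to $b$ (not to $f$) and the strong-operator-topology limits $(\dz^2L)^ie^{-2\dz^2L}\to 0$, $e^{-2\dz^2L}-I\to 0$, shows the tail $\int_R^\fz$ is $O(R^{-\ez})$ and the head $\int_0^\dz$ tends to $0$; this is also exactly where $\wz M>M+\ez+\frac n4+\frac N2(\frac1{p_\oz}-1)$ is used. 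Note also that the paper keeps all heat operators on the atom side, interpreting the truncated double integral as $\la f,\wz C_M\int_\dz^R(t^2L)^{M+1}e^{-2t^2L}\az\,\frac{dt}{t}\ra$, rather than moving $(t^2L)^Me^{-t^2L}$ onto $f$ as you propose. Without the resolvent-expansion step (or some substitute that re-expresses the error through $(I-(I+L)^{-1})^M f$), hypothesis \eqref{4.8} never enters and the argument cannot be completed.
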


\begin{proof}
Let $\az$ be an $(\oz,\wz M)$-atom supported in $B\equiv
B(x_B,r_B)$. Notice that \eqref{4.8} implies that
\begin{equation*}
\int_{\cx}\frac{|(I-(I+L)^{-1})^Mf(x)|^2}
{r_B+[d(x,x_B)]^{n+\ez+2N(1/p_\oz-1)}}\,d\mu(x)<\fz.
\end{equation*}
For $R>\dz>0$, write
\begin{eqnarray*}
&&\wz C_M\int_\dz^R\int_\cx (t^2L)^Me^{-t^2L}f(x)
\overline{t^2Le^{-t^2L}\az(x)}\frac{\,d\mu(x)\,dt}{t}\\
&&\hs =\lf\la f, \wz C_M\int_\dz^R
(t^2L)^{M+1}e^{-2t^2L}\az\frac{\,dt}{t}\r\ra=\la f,\az\ra-\lf\la f,
\az-\wz C_M\int_\dz^R
(t^2L)^{M+1}e^{-2t^2L}\az\frac{\,dt}{t}\r\ra.\nonumber
\end{eqnarray*}
Since $\az$ is an $(\oz, \wz M)$-atom, by Definition
\ref{d4.2}, there exists $b\in L^2(\cx)$ such that $\az=L^{\wz M}b$.
Thus, by the fact that $\wz M>M+\frac n4+\frac N2 (\frac 1{p_\oz}-1)+\ez$, we obtain
\begin{eqnarray*}
\az&&=L^{\wz M}b=(L-L(I+L)^{-1}+L(I+L)^{-1})^M L^{\wz M-M}b\\
&&=\sum_{k=0}^M C_M^k
(L-L(I+L)^{-1})^{M-k}(L(I+L)^{-1})^k  L^{\wz M-M}b\\
&&=\sum_{k=0}^MC_M^k(I-(I+L)^{-1})^M L^{\wz M-k}b,
\end{eqnarray*}
where $C_M^k$ denotes the combinatorial number, which together
with $H_\fz$-functional calculus further implies that
\begin{eqnarray*}
&&\lf\la f, \az-\wz C_M\int_\dz^R
(t^2L)^{M+1}e^{-2t^2L}\az\frac{\,dt}{t}\r\ra\\
&&\hs =\sum_{k=0}^M C_M^k \lf\la (I-(I+L)^{-1})^M f, L^{\wz M-k}b-
\wz C_M\int_\dz^R (t^2L)^{M+1}e^{-2t^2L}L^{\wz M-k}b\frac{\,dt}{t}\r\ra\\
&&\hs =\sum_{k=0}^MC_M^k \lf\la (I-(I+L)^{-1})^M f,
\wz C_M\int_0^\dz (t^2L)^{M+1}e^{-2t^2L}L^{\wz M-k}b\frac{\,dt}{t}\r\ra\\
&& \hs\hs+\sum_{k=0}^MC_M^k\lf\la (I-(I+L)^{-1})^M f,
\wz C_M\int_R^\fz (t^2L)^{M+1}e^{-2t^2L}L^{\wz M-k}b\frac{\,dt}{t}\r\ra
\equiv \mathrm{H}+\mathrm{I}.
\end{eqnarray*}
By \eqref{4.8}, we see that up to a harmless constant, the term
$\mathrm{I}$ is bounded by
\begin{eqnarray*}
&&\lf\{\int_{\cx}\frac{|(I-(I+L)^{-1})^Mf(x)|^2}
{r_B+[d(x,x_B)]^{n+\ez+2N(\frac 1{p_\oz}-1)}}\,d\mu(x)\r\}^{1/2}
\sup_{0\le k\le M}\lf\{\int_\cx\bigg|\int_R^\fz (t^2L)^{M+1}\r.\\
&&\hs\hs\lf.\times e^{-2t^2L}
L^{\wz M-k}b(x)\frac{\,dt}{t}\bigg|^2(r_B+[d(x,x_B)]^{n+\ez+2N(\frac 1{p_\oz}-1)})
\,d\mu(x)\r\}^{1/2}\\
&&\hs\ls \sup_{0\le k\le M}\sum_{j=0}^\fz (2^jr_B)^{\frac {n+\ez}2
+N(\frac 1{p_\oz}-1)}
\int_R^\fz \|(t^2L)^{\wz M+M+1-k}e^{-2t^2L}
b\|_{L^2(U_j(B))}\frac{\,dt}{t^{2(\wz M-k)+1}}\\
&&\hs\ls  \sup_{0\le k\le M}\sum_{j=0}^2 (2^jr_B)^{\frac {n+\ez}2
+N(\frac 1{p_\oz}-1)}
\|b\|_{L^2(\cx)}\int_R^\fz \frac{\,dt}{t^{2(\wz M-k)+1}}\\
&&\hs\hs + \sup_{0\le k\le M}\sum_{j=3}^\fz
(2^jr_B)^{\frac {n+\ez}2+N(\frac 1{p_\oz}-1)}
\|b\|_{L^2(\cx)}\int_R^\fz \exp\lf\{-\frac{\dist(B,U_j(B))^2}{t^2}\r\}
\frac{\,dt}{t^{2(\wz M-k)+1}}\\
&&\hs\ls R^{-2(\wz M-M)}+\sum_{j=3}^\fz (2^jr_B)^{\frac {n+\ez}2+N(\frac 1{p_\oz}-1)}
\int_R^\fz \lf(\frac{t}{2^jr_B}\r)^{n/2+\ez+
N(\frac 1{p_\oz}-1)}\frac{\,dt}{t^{2(\wz M-M)+1}}\\
&&\hs\ls R^{-\ez}\to 0,
\end{eqnarray*}
as $R\to\fz$.

Similarly, the term $\mathrm{H}$ is controlled by
\begin{eqnarray*}
&&\lf\{\int_{\cx}\frac{|(I-(I+L)^{-1})^Mf(x)|^2}
{r_B+[d(x,x_B)]^{n+\ez+2N(\frac 1{p_\oz}-1)}}\,d\mu(x)\r\}^{1/2}
\sup_{0\le k\le M}\lf\{\int_\cx\bigg|\int_0^\dz (t^2L)^{M+1} \r. \\
&&\hs\hs\times \lf.  e^{-2t^2L} L^{\wz M-k}b(x)\frac{\,dt}{t}
\bigg|^2(r_B+[d(x,x_B)]^{n+\ez+2N(\frac 1{p_\oz}-1)})\,d\mu(x)\r\}^{1/2}\\
&&\hs\ls \sum_{j=0}^\fz \sup_{0\le k\le M}(2^jr_B)^{(n+\ez)/2+N(\frac 1{p_\oz}-1)}\\
&&\hs\hs \times\lf\{\int_{U_j(B)}\bigg|\int_0^\dz (t^2L)^{M+1}e^{-2t^2L}
L^{\wz M-k}b(x)\frac{\,dt}{t}\bigg|^2\,d\mu(x)\r\}^{1/2}
\sim \sum_{j=0}^\fz \mathrm{H}_j.
\end{eqnarray*}
For $j\ge3$, we further have
\begin{eqnarray}\label{4.9}
\quad\quad\mathrm{H}_j&&\ls \sup_{0\le k\le M} (2^jr_B)^{\frac {n+\ez}2
+N(\frac 1{p_\oz}-1)}
\int_0^\dz \|(t^2L)^{M+1}e^{-2t^2L}
L^{\wz M-k}b\|_{L^2(U_j(B))}\frac{\,dt}{t}\\
&&\ls \sup_{0\le k\le M} (2^jr_B)^{\frac {n+\ez}2+N(\frac 1{p_\oz}-1)}
\|L^{\wz M-k}b\|_{L^2(\cx)}\int_0^\dz \exp\lf\{-\frac{\dist(B,U_j(B))^2}{t^2}\r\}
\frac{\,dt}{t}\nonumber\\
&&\ls (2^jr_B)^{\frac {n+\ez}2+N(\frac 1{p_\oz}-1)} \int_0^\dz
\lf(\frac{t}{2^jr_B}\r)^{n/2+\ez+N(\frac 1{p_\oz}-1)}\frac{\,dt}{t}
\ls 2^{-j\ez/2}\dz^{n/2+\ez+N(\frac 1{p_\oz}-1)}.\nonumber
\end{eqnarray}
Notice that for each $i\in\cn$,
$(\dz^2L)^ie^{-2\dz^2L}\to 0$ and $e^{-2\dz^2L}-I\to 0$ in the
strong operator topology as $\dz\to 0$. Thus, for $j=0,1,2$, we
obtain
\begin{eqnarray}\label{4.10}
\mathrm{H}_j&&\ls \sup_{0\le k\le M}\lf\|\int_0^\dz
(t^2L)^{(M+1)}e^{-2t^2L} L^{\wz
M-k}b(x)\frac{\,dt}{t}\r\|_{L^2(U_j(B))}\\
&&\ls \sup_{0\le k\le M} \bigg[\sum_{i=1}^M
\|(\dz^2L)^ie^{-2\dz^2L}(L^{\wz
M-k}b)\|_{L^2(U_j(B))}\nonumber\\
&&\hs+\|(e^{-2\dz^2L}-I)(L^{\wz M-k}b)\|_{L^2(U_j(B))}
\bigg]\nonumber\to 0,\nonumber
\end{eqnarray}
as $\dz\to 0$. The estimates \eqref{4.9} and \eqref{4.10}
imply that $\mathrm{H}\to 0$ as $\dz\to 0$, and hence complete the
proof of Proposition \ref{p4.6}.
\end{proof}

To prove that Proposition \ref{p4.6} holds for all $f\in
\bmoo$, we need the following ``dyadic cubes" on spaces of homogeneous type
constructed by Christ \cite[Theorem 11]{cr}.

\begin{lem}\label{l4.1}
There exists a collection $\{Q^k_\az\subset \cx:\,k\in\zz,\, \az\in
I_k\}$ of open subsets, where $I_k$ denotes some (possibly finite)
index set depending on $k$, and constants $\dz\in(0, 1)$, $a_0\in(0,
1)$ and $C_5\in (0,\fz)$ such that

(i) $\mu(\cx\setminus \cup_{\az}Q_\az^k)=0$ for all $k\in\zz $;

(ii) if $i\ge k$, then either $Q^i_\az\subset Q^k_\bz$ or
$Q^i_\az\cap Q^k_\bz=\emptyset$;

(iii) for each $(k,\az)$ and each $i < k$, there exists a unique $\bz$ such that
$Q^k_\az\subset Q^i_\bz$;

(iv) the diameter of $Q_\az^k$ is no more than $C_5 \dz^k$;

(v) each $Q_\az^k$ contains certain ball $B(z^k_\az, a_0\dz^k)$.
\end{lem}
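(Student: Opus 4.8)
The plan is to reproduce the construction of Christ \cite{cr}. Fix a small parameter $\dz\in(0,1)$ (chosen below), and for each $k\in\zz$ use Zorn's lemma to choose a \emph{maximal} $\dz^k$-separated family of points $\{z^k_\az\}_{\az\in I_k}\subset\cx$, so that $d(z^k_\az,z^k_\bz)\ge\dz^k$ for $\az\ne\bz$ and no further point can be adjoined. Maximality forces the balls $\{B(z^k_\az,\dz^k)\}_\az$ to cover $\cx$, while the separation makes $\{B(z^k_\az,\dz^k/2)\}_\az$ pairwise disjoint; combined with the doubling property \eqref{2.1} this shows that each $I_k$ is at most countable and that only boundedly many of these balls can meet any fixed ball of comparable radius, which is used repeatedly below.

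Next I would organize the selected points into a forest. For each $k$ and each $\az\in I_{k+1}$, maximality at level $k$ provides at least one $\bz\in I_k$ with $d(z^{k+1}_\az,z^k_\bz)<\dz^k$; fix one such $\bz$ once and for all and call $(k,\bz)$ the parent of $(k+1,\az)$. Extending transitively yields an ``ancestor'' partial order on $\bigsqcup_k I_k$. I would then define the dyadic cube attached to $(k,\az)$ by a limiting procedure: for each integer $N>k$ let $\overline{Q}^{\,k,N}_\az$ be the union of the balls $B(z^i_\bz,a_0\dz^i)$ over all descendants $(i,\bz)$ of $(k,\az)$ with $k\le i\le N$, where $a_0\in(0,1)$ is a small absolute constant; then let $N\to\fz$, take closures, pass to interiors, and discard a suitable null set. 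This yields open sets $Q^k_\az$ satisfying, by construction, $B(z^k_\az,a_0\dz^k)\subset Q^k_\az\subset\{x\in\cx:\,d(x,z^k_\az)<C_5\dz^k\}$ for an appropriate $C_5$.

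Granting this, the verification of (ii)--(v) is routine. Properties (ii) and (iii) are built into the forest structure once $\dz$ and $a_0$ are chosen small enough that each descendant cube, whose center lies within $\sum_{i\ge k+1}2a_0\dz^i\ls\dz^k$ of its ancestor's center, is swallowed by the ancestor cube and stays disjoint from that ancestor's siblings; since $d$ is a genuine metric, this reduces to an elementary smallness inequality on $\dz$ and $a_0$. Property (iv) holds since any $x\in Q^k_\az$ lies within $\sum_{i\ge k}2a_0\dz^i\le C_5\dz^k$ of $z^k_\az$, and (v) holds because $B(z^k_\az,a_0\dz^k)$ is one of the constituent balls of $Q^k_\az$.

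The delicate point, and the main obstacle, is (i): the covering $\{B(z^k_\az,\dz^k)\}_\az$ genuinely overlaps, so the cubes must be carved out in a pairwise disjoint way, and one must show that the ``boundary layer'' discarded at each scale is $\mu$-negligible. Following Christ, I would prove that for $\mu$-almost every $x$ the distance from $x$ to the union of the relevant spheres $\{y\in\cx:\,d(y,z^i_\bz)=a_0\dz^i\}$ stays, for infinitely many scales, a fixed fraction of the corresponding $\dz^i$; the complementary bad set is then covered, scale by scale, by thin annular neighborhoods whose measures, estimated via \eqref{2.1}--\eqref{2.3}, sum to a quantity that the choice of $a_0$ renders arbitrarily small, so the bad set is null. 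Checking in addition that passing to interiors does not destroy this covering finishes the proof; for these measure-theoretic details I would simply refer to \cite[Theorem~11]{cr}.
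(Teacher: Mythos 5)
The paper itself offers no proof of this lemma: it is quoted verbatim (minus the small-boundary estimate) from Christ \cite[Theorem 11]{cr}, so what you are really being asked to reproduce is Christ's construction. Your architecture (maximal $\dz^k$-separated nets $\{z^k_\az\}$, a parent/forest structure on the labels, cubes built from balls $B(z^i_\bz,a_0\dz^i)$ over descendants) is indeed Christ's, but there is a genuine gap at the step you dismiss as routine. You assign to each $(k+1,\az)$ an \emph{arbitrary} eligible parent (``fix one such $\bz$ once and for all'') and then assert that (ii)--(iii), in particular the disjointness of distinct cubes of the same generation, reduce to ``an elementary smallness inequality on $\dz$ and $a_0$.'' This is false for an arbitrary parent assignment. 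The nets at different levels are not nested, so a level-$(k+m)$ center can lie within $\dz^{k+m}$ of $z^k_\az$ and yet, through a chain of perfectly legal parent choices each moving by almost $\dz^j$, be declared a descendant of a sibling $(k,\az')$ with $d(z^k_\az,z^k_{\az'})=\dz^k$ (take $\cx=\rr$, level-$k$ net $\dz^k\zz$, and a chain drifting from $0$ towards $\dz^k$ by $\dz^{k},\dz^{k+1},\dots$). For $m$ large, the ball $B(\cdot,a_0\dz^{k+m})$ around such a center lies inside $B(z^k_\az,a_0\dz^k)\subset Q^k_\az$, so $Q^k_\az\cap Q^k_{\az'}\neq\emptyset$ while neither cube contains the other; no choice of small $\dz$ and $a_0$ prevents this. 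The missing ingredient is precisely the nontrivial combinatorial content of Christ's lemma on the partial order: the parent map must be chosen so that proximity forces descent (e.g.\ if $d(z^{k+1}_\az,z^k_\bz)\le\frac12\dz^k$, then $(k,\bz)$ \emph{must} be the parent), which by induction yields a constant $c>0$ such that $l\ge k$ and $d(z^l_\bz,z^k_\az)\le c\,\dz^k$ imply $(l,\bz)\preceq(k,\az)$; equivalently, non-descendant centers at finer levels satisfy $d(z^l_\bz,z^k_\az)\gs\dz^k$. Only with this separation does your smallness argument for (ii)--(iii) go through.

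Two secondary points. For (i) you defer to \cite{cr}, which is acceptable since the paper does the same for the whole lemma, but the mechanism you sketch---covering the bad set by ``thin annular neighborhoods whose measures sum to something small''---is not available in a general space of homogeneous type: the doubling property \eqref{2.1}--\eqref{2.3} gives no smallness for the measure of an annulus $\{y:\,|d(y,z)-r|<\ez r\}$, which may carry a fixed proportion of $\mu(B(z,2r))$. Christ's proof of the almost-everywhere covering (and of his stronger small-boundary estimate) is an iterative density/doubling argument exploiting that near every point, at every scale, some ball of comparable radius lies entirely inside one cube. Finally, minor slips: the per-generation drift of centers is bounded by $\dz^i$ (not $2a_0\dz^{i+1}$), so the containment $Q^k_\az\subset B(z^k_\az,C_5\dz^k)$ comes from $\sum_{i\ge k}\dz^i+a_0\dz^k$; and no closure/interior/null-set surgery is needed, since the union of descendant balls is already open---indeed ``discarding a null set'' from it would in general destroy openness rather than help.
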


From Proposition \ref{p4.6} and Lemma \ref{l4.1},
we deduce the following conclusion.

\begin{cor}\label{c4.3}
Let $L$, $\oz$, $\ro$ and $M$ be as in Definition \ref{d4.4} and
$\wz M>M+\frac n4+ \frac N 2 (\frac 1 {p_\oz}-1)$. Then for all
$(\oz,\wz M)$-atom $\az$ and $f\in \mathrm{BMO}^{M}_{\ro,L}(\cx)$,
\begin{equation*}
\la f, \az\ra =\wz C_M\int_{\cx\times(0,\fz)} (t^2L)^Me^{-t^2L}f(x)
\overline{t^2Le^{-t^2L}\az(x)}\frac{\,d\mu(x)\,dt}{t},
\end{equation*}
where $\wz C_M$ is as in Proposition \ref{p4.6}.
\end{cor}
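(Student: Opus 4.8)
The plan is to upgrade Proposition \ref{p4.6} from functionals $f$ satisfying the extra integrability hypothesis \eqref{4.8} to arbitrary $f\in\mathrm{BMO}^M_{\rho,L}(\cx)$, by means of a truncation-and-exhaustion argument organized along the Christ dyadic cubes of Lemma \ref{l4.1}. First I would observe that a general $f\in\bmoo$ need not obey \eqref{4.8}, so the reproducing formula cannot be applied directly; however, the identity in Proposition \ref{p4.6} only pairs $f$ against a single $(\oz,\wz M)$-atom $\az$ supported in a fixed ball $B\equiv B(x_B,r_B)$, so it suffices to replace $f$ by a modification that (a) agrees with $f$ on a large neighbourhood of $B$ in the sense needed to compute $\la f,\az\ra$ and the right-hand side integral, and (b) does satisfy \eqref{4.8}.

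The key steps, in order, are as follows. First, fix the atom $\az=L^{\wz M}b$ supported in $B$, with $\wz M>M+\ez+\frac n4+\frac N2(\frac1{p_\oz}-1)$ for a suitable $\ez>0$. Second, use Lemma \ref{l4.1} to select, for each large $R$, a finite union $O_R$ of dyadic cubes $Q^k_\az$ comprising those cubes that meet $B(x_B,R)$, and set $f_R\equiv (I-(I+L)^{-1})^M f$ truncated (multiplied by $\chi_{O_R}$ in the appropriate distributional sense) or, more precisely, build an auxiliary functional agreeing with $f$ against all test molecules supported near $B$ while having compactly controlled behaviour at infinity; the dyadic structure (properties (ii)–(v) of Lemma \ref{l4.1}) is what lets one carve space into finitely many pieces on which the $\bmoo$ bound and the Davies-Gaffney estimate of Lemma \ref{l2.1} give geometric decay. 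Third, verify that this truncated object satisfies \eqref{4.8}: here one combines the defining bound $\|(I-e^{-r_B^2L})^Mf\|_{L^2(B')}\le C\rho(V(B'))[V(B')]^{1/2}\|f\|_{\bmoo}$ over a dyadic family, Proposition \ref{p4.4} to pass between the resolvent and semigroup versions, and Proposition \ref{p4.5} to control the $\cx\times(0,\fz)$ Carleson-type integral, with the doubling bounds \eqref{2.3}–\eqref{2.4} absorbing the weight $1/(1+[d(x,x_0)]^{n+\ez+2N(1/p_\oz-1)})$; the exponent condition on $\ez$ together with Proposition \ref{p2.1} (the lower type of $\rho$) makes the resulting series in $j$ summable. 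Fourth, apply Proposition \ref{p4.6} to the truncated functional, and finally let $R\to\fz$: show that both sides converge, the left side to $\la f,\az\ra$ (because $\az$ is fixed and, being an $(\oz,\wz M)$-atom, lies in every $\cm^{M,\ez}_\oz(L)$, so the pairing is continuous under the truncation), and the right side to $\wz C_M\int_{\cx\times(0,\fz)}(t^2L)^Me^{-t^2L}f(x)\,\overline{t^2Le^{-t^2L}\az(x)}\,\frac{d\mu(x)\,dt}{t}$, the tail estimates being exactly the ones already performed for $\mathrm H$ and $\mathrm I$ in the proof of Proposition \ref{p4.6}, now summed over the dyadic annuli.

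The main obstacle I anticipate is making the truncation precise and rigorous: $f$ is only a functional on $\cm^{M,\ez}_\oz(L)$, so one cannot literally multiply it by a cutoff, and one must instead argue at the level of the distribution $(I-(I+L)^{-1})^Mf\in L^2_{\loc}(\cx)$ (whose local $L^2$ membership is recorded in the discussion preceding Definition \ref{d4.4}) and show that replacing it by its restriction to $O_R$ does not change the pairing with $\az$ modulo an error tending to $0$. This is where the finite-speed-of-propagation consequences — the kernel supports in \eqref{2.7} and Lemma \ref{l2.2}, together with Lemma \ref{l2.1} — must be invoked carefully: the operators $(t^2L)^{M+1}e^{-2t^2L}$ applied to $b$ have $L^2$-mass concentrated, up to Gaussian tails, within $O(t)$ of $B$, so the contribution of the truncated-away part of $f$ is governed by $\int \exp\{-\dist(B,U_j(B))^2/t^2\}$-type integrals that already appeared in \eqref{4.9}. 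Once the truncation is justified, the remaining work is a routine reprise of the estimates in Proposition \ref{p4.6} distributed over the Christ cubes, and the corollary follows.
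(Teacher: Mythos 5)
Your guiding premise --- that a general $f\in\bmoo$ ``need not obey \eqref{4.8}'' --- is mistaken, and this is where the proposal both goes astray and already contains its own repair. The paper's proof of the corollary consists precisely in checking that \emph{every} $f\in\bmoo$ satisfies \eqref{4.8}, for every $\ez>0$, hence in particular for some $\ez\in(0,\wz M-M-\frac n4-\frac N2(\frac1{p_\oz}-1))$, after which Proposition \ref{p4.6} applies to $f$ itself with no truncation whatsoever. That verification is essentially the computation you sketch in your third step, carried out directly for $f$: cover the annulus $U_j(B(x_0,1))$ by the Christ cubes $Q_\bz^{k_0}$ of Lemma \ref{l4.1}, each contained in a ball $B(z_\bz^{k_0},1)$ of radius $1$; bound $\|(I-(I+L)^{-1})^Mf\|_{L^2(Q_\bz^{k_0})}$ by $\ro(V(z_\bz^{k_0},1))[V(z_\bz^{k_0},1)]^{1/2}\|f\|_{\bmoo}$ via Proposition \ref{p4.4}; use \eqref{2.4} and the upper type $1/p_\oz-1$ of $\ro$ to replace $\ro(V(z_\bz^{k_0},1))$ by $2^{jN(1/p_\oz-1)}\ro(V(x_0,1))$, a loss exactly absorbed by the exponent $2N(1/p_\oz-1)$ in the weight; then sum, using \eqref{2.3} to control $\sum_\bz V(z_\bz^{k_0},1)\ls 2^{jn}V(B(x_0,1))$, obtaining a bound of the form $\sum_j 2^{-j\ez/2}<\fz$. (Proposition \ref{p4.5} is not needed for \eqref{4.8}; it enters only in the duality argument elsewhere.) Since this estimate never uses any cutoff, the entire truncation-and-exhaustion scheme is superfluous.

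The genuine gap, as the proof stands, is that the object to which you intend to apply Proposition \ref{p4.6} is never constructed. You concede that $f$, being only a functional on $\cm_\oz^{M,\ez}(L)$, cannot be multiplied by $\chi_{O_R}$, and restricting the $L^2_{\loc}(\cx)$ function $(I-(I+L)^{-1})^Mf$ to $O_R$ does not produce an element of $\cm_\oz^M(\cx)$: there is no way to invert $(I-(I+L)^{-1})^M$ and return to a functional, so neither the hypothesis \eqref{4.8} for the putative $f_R$ nor the asserted convergence $\la f_R,\az\ra\to\la f,\az\ra$ has a precise meaning --- the ``main obstacle'' you flag is exactly the unproved step, not a routine technicality. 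The repair is the observation above: run your third-step estimate on $f$ itself (choosing $\ez$ strictly below $\wz M-M-\frac n4-\frac N2(\frac1{p_\oz}-1)$ so that the hypothesis of Proposition \ref{p4.6} on $\wz M$ is met), conclude that \eqref{4.8} holds for $f$, and quote Proposition \ref{p4.6} verbatim; no limiting argument in $R$ is required.
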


\begin{proof}
Let $\ez\in (0,\wz M-M-\frac n4- \frac N 2 (\frac 1 {p_\oz}-1))$. By
Proposition \ref{p4.6}, we only need to show that \eqref{4.8} with
such an $\ez$ holds for all $f\in \mathrm{BMO}^{M}_{\ro,L}(\cx)$.

Let all the notation be the same as in Lemma \ref{l4.1}. For each
$j\in\zz$, choose $k_j\in\zz$ such that $C_5\dz^{k_j}\le
2^j<C_5\dz^{k_j-1}$. Let $B=B(x_0,1)$, where $x_0$ is as in
\eqref{4.8}, and
$$M_j \equiv \{\bz\in I_{k_0}:\ Q^{k_0}_\bz\cap  B(x_0, C_5
\dz^{k_j-1})\neq \emptyset\}.$$
Then for each $j\in \zz_+$,
\begin{equation}\label{4.11}
U_j(B)\subset B(x_0,C_5\dz^{k_j-1}) \subset \bigcup_{\bz\in M_j}Q_\bz^{k_0}
\subset B(x_0,2C_5\dz^{k_j-1}).
\end{equation}
By Lemma \ref{l4.1}, the sets $Q_\bz^{k_0}$ for all $\bz\in M_j$ are disjoint.
Moreover, by (iv) and (v) of Lemma \ref{l4.1},
there exists $z_\bz^{k_0}\in Q_\bz^{k_0}$ such that
\begin{equation}\label{4.12}
B(z_\bz^{k_0},a_0\dz^{k_0})\subset Q_\bz^{k_0}
\subset B(z_\bz^{k_0},C_5\dz^{k_0})\subset
B(z_\bz^{k_0},1).
\end{equation}
Thus, by Proposition \ref{p4.4}, \eqref{2.4} and the fact
that $\ro$ is of upper type $1/p_\oz-1$, we have
\begin{eqnarray*}
\mathrm{H}&&\equiv\lf\{\int_{\cx}\frac{|(I-(I+L)^{-1})^Mf(x)|^2}
{1+[d(x,x_0)]^{n+\ez+2N(1/p_\oz-1)}}\,d\mu(x)\r\}^{1/2}\\
&&\ls \sum_{j=0}^\fz 2^{-j[(n+\ez)/2+N(1/p_\oz-1)]}
\lf\{\sum_{\bz\in M_j}\int_{Q_\bz^{k_0}}|(I-(I+L)^{-1})^Mf(x)|^2\,d\mu(x)\r\}^{1/2}\\
&&\ls \sum_{j=0}^\fz 2^{-j[(n+\ez)/2+N(1/p_\oz-1)]}
\lf\{\sum_{\bz\in M_j}[\ro(V(z_\bz^{k_0},1))]^2V(z_\bz^{k_0},1)
\|f\|^2_{\bmoo}\r\}^{1/2}\\
&&\ls \sum_{j=0}^\fz 2^{-j[(n+\ez)/2+N(1/p_\oz-1)]}
\lf\{\sum_{\bz\in M_j}2^{2jN(1/p_\oz-1)}[\ro(V(x_0,1))]^2V(z_\bz^{k_0},1)\r\}^{1/2}\\
&&\ls \sum_{j=0}^\fz 2^{-j(n+\ez)/2}
\lf\{\sum_{\bz\in M_j}V(z_\bz^{k_0},1)\r\}^{1/2}.
\end{eqnarray*}
By \eqref{4.11}, \eqref{4.12} and \eqref{2.3}, we obtain
$$\sum_{\bz\in M_j}V(z_\bz^{k_0},1)\ls \sum_{\bz\in M_j} V(z_\bz^{k_0},a\dz^{k_0})
\ls\sum_{\bz\in M_j} V(Q_\bz^{k_0})\ls V(x_0, 2C_5\dz^{k_j-1})\ls
2^{jn}V(B),$$ which further implies that $\mathrm{H}<\fz$, and hence
completes the proof of Corollary \ref{c4.3}.
\end{proof}

Using Corollary \ref{c4.3}, we now establish the duality between
$H_{\oz, L}(\cx)$ and $\mathrm{BMO}^{M}_{\ro,\,L}(\cx)$.

\begin{thm}\label{t4.1}
Let $L$ satisfy Assumptions (A) and (B), $\oz$ satisfy Assumption (C),
$\ro$ be as in \eqref{2.10}, $M> \frac n2(\frac{1}{p_\oz}-\frac 12)$ and
$\wz M>M+\frac n4+ \frac N 2(\frac 1{ p_\oz}-1)$.
Then $(H_{\oz,L}(\cx))^\ast$, the
dual space of $H_{\oz,L}(\cx)$, coincides with
$\mathrm{BMO}^{M}_{\ro,\,L}(\cx)$
in the following sense.

(i) Let  $g\in \mathrm{BMO}^{M}_{\ro,\,L}(\cx)$. Then the linear
functional $\ell_g$, which is initially defined on $H^{{\rm at},\wz
M}_{\oz,\rm fin}(\cx)$ by
\begin{equation}\label{4.13}
\ell_g(f)\equiv \la g, f\ra,
\end{equation}
has a unique extension to $H_{\oz,L}(\cx)$ with
$\|\ell_g\|_{(H_{\oz,L}(\cx))^\ast}\le
C\|g\|_{\mathrm{BMO}^{M}_{\ro,L}(\cx)},$ where $C$ is a positive
constant independent of $g.$

(ii) Conversely, let $\ez>n(1/p_\oz-1/{p_\oz^+})$. Then for any
$\ell\in (H_{\oz,L}(\cx))^\ast$, $\ell\in
\mathrm{BMO}^{M}_{\ro,\,L}(\cx)$ with
$\|\ell\|_{\mathrm{BMO}^{M}_{\ro,\,L}(\cx)}\le
C\|\ell\|_{(H_{\oz,L}(\cx))^\ast}$ and \eqref{4.13} holds for all
$f\in H^{{\rm mol},M,\ez}_{\oz,\rm fin}(\cx)$, where $C$ is a
positive constant independent of $\ell.$
\end{thm}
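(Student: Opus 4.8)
The plan is to establish the two inclusions separately, using the atomic decomposition of $H_{\oz,L}(\cx)$ (Proposition \ref{p4.3}, Corollary \ref{c4.1}), the fact that each $(\oz,M)$-atom lies in $H_{\oz,L}(\cx)$ with controlled norm (Proposition \ref{p4.1}), and the reproducing formula of Corollary \ref{c4.3}. First, for part (i), fix $g\in \mathrm{BMO}^{M}_{\ro,L}(\cx)$ and let $f\in H^{{\rm at},\wz M}_{\oz,{\rm fin}}(\cx)$, so $f=\sum_{j}\lz_j\az_j$ is a \emph{finite} sum of $(\oz,\wz M)$-atoms. By Corollary \ref{c4.3} applied to each atom $\az_j$ together with linearity, $\la g,f\ra=\wz C_M\iint_{\cx\times(0,\fz)}(t^2L)^Me^{-t^2L}g(x)\,\overline{t^2Le^{-t^2L}f(x)}\,\frac{\,d\mu(x)\,dt}{t}$. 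The idea is then to bound this bilinear pairing by a tent-space duality estimate: the function $(x,t)\mapsto (t^2L)^Me^{-t^2L}g(x)$ defines (by Proposition \ref{p4.5}) a $\ro$-Carleson type measure with norm $\ls\|g\|_{\mathrm{BMO}^{M}_{\ro,L}(\cx)}$, while $(x,t)\mapsto t^2Le^{-t^2L}f(x)$ lies in $T_\oz(\cx)$ with $\|\cdot\|_{T_\oz(\cx)}\sim\|f\|_{H_{\oz,L}(\cx)}$ by Definition \ref{d4.1} and \eqref{2.8}. Combining these via the pairing inequality between $T_\oz(\cx)$ and the space of $\ro$-Carleson measures (an inequality of the type $|\iint F\,\overline G\,\frac{\,d\mu\,dt}{t}|\ls \|F\|_{T_\oz}\,\|G\|_{{\rm Carl}_\ro}$, proved by slicing over a Whitney/tent decomposition of the level sets of $\ca(F)$ and using the lower-type property of $\oz$) gives $|\ell_g(f)|\ls\|g\|_{\mathrm{BMO}^{M}_{\ro,L}(\cx)}\|f\|_{H_{\oz,L}(\cx)}$. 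Since $H^{{\rm at},\wz M}_{\oz,{\rm fin}}(\cx)$ is dense in $H_{\oz,L}(\cx)$ by Corollary \ref{c4.2}, $\ell_g$ extends uniquely with the claimed bound.

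For part (ii), given $\ell\in(H_{\oz,L}(\cx))^\ast$, I would first check that $\ell$ makes sense as an element of $\cm_\oz^M(\cx)$: for $\ez>n(1/p_\oz-1/p_\oz^+)$ every $(\oz,M,\ez)$-molecule is in $H_{\oz,L}(\cx)$ with uniformly bounded norm by Proposition \ref{p4.1}, and in particular every $\phi\in\cm_\oz^{M,\ez}(L)$ of norm $1$ is such a molecule adapted to $B(x_0,1)$; hence $\ell$ restricts to a bounded functional on each $\cm_\oz^{M,\ez}(L)$, i.e. $\ell\in(\cm_\oz^{M,\ez}(L))^\ast$ for all admissible $\ez$, so $\ell\in\cm_\oz^M(\cx)$. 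Next, to estimate $\|\ell\|_{\mathrm{BMO}^{M}_{\ro,L}(\cx)}$, fix a ball $B$; the quantity $\frac1{\ro(V(B))}[\frac1{V(B)}\int_B|(I-e^{-r_B^2L})^M\ell|^2\,d\mu]^{1/2}$ should be realized by testing $\ell$ against a suitable normalized function supported near $B$. Concretely, by duality in $L^2(B)$ choose $h\in L^2(B)$ with $\|h\|_{L^2(B)}\le 1$ so that the above square bracket is comparable to $\frac1{\ro(V(B))V(B)^{1/2}}\la \ell,(I-e^{-r_B^2L})^M h\ra$ (after checking the adjoint can be moved off $\ell$, using the Davies-Gaffney decay and that $(I-e^{-r_B^2L})^M h$ lies in the molecule space); then argue that $(I-e^{-r_B^2L})^M h$, suitably rescaled, is a fixed multiple of an $(\oz,M,\ez)$-molecule — this is the standard computation using Lemma \ref{l2.1} and the $U_j(B)$ annular decomposition, exactly as in the proof of Proposition \ref{p4.1}. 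Since molecules lie in $H_{\oz,L}(\cx)$ with bounded norm, applying $|\ell|\le\|\ell\|_{(H_{\oz,L}(\cx))^\ast}\|\cdot\|_{H_{\oz,L}(\cx)}$ yields $\|\ell\|_{\mathrm{BMO}^{M}_{\ro,L}(\cx)}\ls\|\ell\|_{(H_{\oz,L}(\cx))^\ast}$.

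Finally, for the identity \eqref{4.13} on $H^{{\rm mol},M,\ez}_{\oz,{\rm fin}}(\cx)$: for a finite molecular combination $f$, both $\ell(f)$ and $\la\ell,f\ra$ are well defined, and they agree because $f$ can be approximated in $H_{\oz,L}(\cx)$ (equivalently, $t^2Le^{-t^2L}f$ can be approximated in $T_\oz(\cx)$) by finite atomic combinations on which the pairing is unambiguous by Corollary \ref{c4.3}; one passes to the limit using the continuity of $\ell$ on $H_{\oz,L}(\cx)$ and the boundedness of the molecule-to-Hardy-space embedding. I expect the main obstacle to be part (ii): specifically, justifying rigorously that the adjoint operator $(I-e^{-r_B^2L})^M$ can be transferred onto the test function $h$ when pairing with the functional $\ell$ (which a priori lives only in the dual of molecule spaces, not in $L^2$), and controlling the resulting function as a genuine molecule with the correct $\ro(V(2^jB))$-weighted decay — this requires careful use of the Davies-Gaffney estimate of Lemma \ref{l2.1} and the relation between $\ro$ and $\oz$ from Proposition \ref{p2.1}. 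The tent-space pairing inequality needed in part (i) is technical but essentially identical to its analogue in \cite{jy,hlmmy}, so I would cite or mirror that argument rather than redo it in full.
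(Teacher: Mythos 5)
Your proposal is correct and its skeleton matches the paper's: Corollary \ref{c4.3} plus a $T_\oz$--Carleson pairing and the density of $H^{{\rm at},\wz M}_{\oz,\rm fin}(\cx)$ for (i); testing $\ell$ against normalized, ball-localized molecule-type functions together with Proposition \ref{p4.1} for (ii). Two remarks on where you deviate. In (i), the bilinear estimate you would cite is exactly what the paper proves in-line: decompose $t^2Le^{-t^2L}f$ into $T_\oz(\cx)$-atoms by Theorem \ref{t3.1}, apply the H\"older inequality and Proposition \ref{p4.5} atomwise, and sum via Remark \ref{r3.1}(ii); note the summability of $\sum_j|\lz_j|$ rests on the upper type $1$ (concavity) of $\oz$, not its lower type as you wrote. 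In (ii), you test against $(I-e^{-r_B^2L})^Mh$, which bounds the $\bmoo$-norm in its original form and dispenses with Proposition \ref{p4.4}; the paper instead uses $(I-(I+r_B^2L)^{-1})^M\phi$, whose representation $L^M\wz b$ with $\wz b=r_B^{2M}(I+r_B^2L)^{-M}\phi$ and Davies-Gaffney bounds on $U_j(B)$ are immediate from Lemma \ref{l2.1}, and then invokes Proposition \ref{p4.4}. Your variant does work: writing $b=(\int_0^{r_B^2}e^{-sL}\,ds)^M h$ gives $(r_B^2L)^kb=r_B^{2k}(I-e^{-r_B^2L})^k(\int_0^{r_B^2}e^{-sL}\,ds)^{M-k}h$, and Gaussian off-diagonal decay yields the molecular bounds; the legitimacy of $(I-e^{-r_B^2L})^M\ell$ in $L^2_{\loc}(\cx)$, which you flag as the main obstacle, is already supplied by the paragraph preceding Definition \ref{d4.4} once $\ell\in\cm_\oz^M(\cx)$ is known. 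Finally, \eqref{4.13} on $H^{{\rm mol},M,\ez}_{\oz,\rm fin}(\cx)$ needs no atomic approximation: finite molecular combinations lie in $\cm_\oz^{M,\ez}(L)$, so $\la \ell,f\ra$ is by definition the restriction of the original functional.
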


\begin{proof}
Let $g\in \mathrm{BMO}^{M}_{\ro,L}(\cx)$. For any $f\in H^{{\rm
at},\wz M}_{\oz,\rm fin}(\cx)$, by Proposition \ref{p4.1}, we have
that $t^2Le^{-t^2L}f\in T_{\oz}(\cx)$. By Theorem \ref{t3.1}, there
exist $\{\lz_j\}_{j=1}^\fz\subset \cc$ and $T_\oz(\cx)$-atoms
$\{a_j\}_{j=1}^\fz$ supported in $\{\widehat B_j\}_{j=1}^\fz$ such
that \eqref{3.1} and \eqref{3.2} hold. This, together with Corollary
\ref{c4.3}, the H\"older inequality, Proposition \ref{p4.5} and
Remark \ref{r3.1}(ii), yields that
\begin{eqnarray}\label{4.14}
|\la g, f \ra|&&=\bigg|C_{\wz M}\int_0^\fz\int_\cx(t^2L)^Me^{-t^2L}g(x)
\overline{t^2Le^{-t^2L}f(x)}\frac{\,d\mu(x)\,dt}{t}\bigg|\\
&&\ls\sum_j|\lz_j|\int_0^\fz\int_\cx|(t^2L)^Me^{-t^2L}g(x)
a_j(x,t)|\frac{\,d\mu(x)\,dt}{t}\nonumber\\
&&\ls\sum_j|\lz_j|\|a_j\|_{T^2_2(\cx)}
\lf(\int_{\widehat B_j}|(t^2L)^Me^{-t^2L}g(x)|^2
\frac{\,d\mu(x)\,dt}{t}\r)^{1/2}\nonumber\\
&&\ls\sum_j|\lz_j|\|g\|_{\bmoo}\ls
\|t^2Le^{-t^2L}f\|_{T_\oz(\cx)}\|g\|_{\bmoo}\nonumber\\
&&\sim \|f\|_{H_{\oz,L}(\cx)}\|g\|_{\bmoo}\nonumber.
\end{eqnarray}
Then by Proposition \ref{c4.2}, we obtain (i).

Conversely, let $\ell\in (H_{\oz,L}(\cx))^\ast$. If $g\in
\cm_\oz^{M,\ez}(L)$, then $g$ is a multiple of an
$(\oz,M,\ez)$-molecule. Moreover, if $\ez>n(1/p_\oz-1/p_\oz^+)$,
then by Proposition \ref{p4.1}, we have $g\in H_{\oz,L}(\cx)$ and
hence $\cm_\oz^{M,\ez}(L)\subset H_{\oz,L}(\cx)$. Then
$\ell\in\cm_\oz^M(\cx)$.

On the other hand, for any ball $B$, let $\phi\in L^2(B)$ with
$\|\phi\|_{L^2(B)}\le \frac{1}{[V(B)]^{1/2}\ro(V(B))}$ and $\wz
\bz\equiv (I-[I+r_B^2L]^{-1})^M\phi$. Obviously, $\wz
\bz=(r_B^2L)^M(I+r_B^2L)^{-M}\phi\equiv L^M \wz b$. Then from the
fact that $\{(t^2L)^k(I+r_B^2L)^{-M}\}_{0\le k\le M}$ satisfy the
Davies-Gaffney estimate (see Lemma \ref{l2.1}), we deduce that for
each $j\in \zz_+$ and $k =0,1,\,\cdots,M$, we have
\begin{eqnarray*}\|(r_B^2L)^k\wz b\|_{L^2(U_j(B))}&&=r_B^{2M}
\|(I-[I+r_B^2L]^{-1})^k(I+r_B^2L)^{-(M-k)}\phi\|_{L^2(U_j(B))}\\
&&\ls r_B^{2M}\exp\lf\{-\frac{\dist(B,U_j(B))}{r_B}\r\}\|\phi\|_{L^2(B)}\\
&&\ls r_B^{2M}2^{-2j(M+\ez)}2^{jn(1/p_\oz-1/2)}[V(2^jB)]^{-1/2}[\ro(V(2^jB))]^{-1}\\
&&\ls r_B^{2M}2^{-2j\ez}[V(2^jB)]^{-1/2}[\ro(V(2^jB))]^{-1},
\end{eqnarray*}
since $2M>n(1/p_\oz-1/2)$. Thus, $\wz \bz$ is a multiple of an
$(\oz,M,\ez)$-molecule. Since $(I-[I+t^2L]^{-1})^M\ell$ is well
defined and belongs to $L^2_{\loc}(\cx)$ for every $t>0$, by the
fact that $\|\wz\bz\|_{H_{\oz,L}(\cx)}\ls 1$, we have
\begin{eqnarray*}
|\la (I-[I+r_B^2L]^{-1})^M\ell,\phi\ra|=|\la \ell,(I-[I+r_B^2L]^{-1})^M\phi\ra|
=|\la \ell,\wz \bz\ra|\ls \|\ell\|_{(H_{\oz,L}(\cx))^\ast},
\end{eqnarray*}
which further implies that
\begin{eqnarray*}
&&\frac{1}{\ro(V(B))}\lf(\frac{1}{V(B)}\int_{B}
|(I-[I+r_B^2L]^{-1})^M\ell(x)|^2\,d\mu(x)\r)^{1/2}\\
&&\hs=\sup_{\|\phi\|_{L^2(B)}\le 1}\lf|\bigg\la \ell,(I-[I+r_B^2L]^{-1})^M
\frac{\phi}{[V(B)]^{1/2}\ro(V(B))}\bigg\ra\r|
\ls\|\ell\|_{(H_{\oz,L}(\cx))^\ast}.
\end{eqnarray*}
Thus, by Proposition \ref{p4.4},
we obtain $\ell\in \bmoo$, which completes the proof of Theorem \ref{t4.1}.
\end{proof}

\begin{rem}\label{r4.2}\rm
By Theorem \ref{t4.1}, we have that for all $M> \frac n2(\frac{1}{p_\oz}-\frac 12)$,
the spaces $\bmoo$ coincide with equivalent norms;
thus, in what follows, we denote $\bmoo$ simply by $\bmo$.
\end{rem}

Recall that  a measure $d\mu$ on $\cx\times (0,\fz)$ is
called a $\ro$-Carleson measure if
$$\|d\mu\|_\ro\equiv\sup_{B\subset
\cx}\lf\{\frac{1}{V(B)[\ro(V(B))]^2}\int_{\widehat{B}}\,|d\mu|\r\}^{1/2}<\fz,$$
where the supremum is taken over all balls $B$ of $\cx$ and
$\widehat{B}$ denotes the tent over $B$; see \cite{hsv}.

Using Theorem \ref{t4.1} and Proposition \ref{p4.5}, we obtain
the following $\ro$-Carleson measure characterization of $\bmo$.

\begin{thm}\label{t4.2}
Let $L$ satisfy Assumptions (A) and (B), $\oz$ satisfy Assumption (C),
$\ro$ be as in \eqref{2.10} and $M>\frac n2(\frac{1}{p_\oz}-\frac 12)$.
Then the following conditions are equivalent:

(i) $f\in \bmo$;

(ii) $f\in \cm_\oz^M(\cx)$ satisfies \eqref{4.8} for some $\ez>0$,
and $d\mu_f$ is a $\ro$-Carleson measure, where $d\mu_f$ is defined
by $d\mu_f\equiv|(t^2L)^Me^{-t^2L}f(x)|^2
\frac{\,d\mu(x)\,dt}{t}.$

Moreover, $\|f\|_{\bmo}$ and $\|d\mu_f\|_\ro$ are comparable.
\end{thm}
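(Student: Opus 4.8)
The plan is to derive the equivalence directly from the duality Theorem \ref{t4.1} and the $\rho$-Carleson estimate in Proposition \ref{p4.5}, using the Calder\'on reproducing formula from Corollary \ref{c4.3}. First I would prove the implication (i) $\Rightarrow$ (ii). Suppose $f\in \bmo = \mathrm{BMO}^M_{\ro,L}(\cx)$. By the observation preceding Definition \ref{d4.4} (that $(I-A_t)^Mf\in L^2_{\loc}(\cx)$ for $f$ in the dual of $\cm_\oz^{M,\ez}(L)$) together with the argument already carried out in the proof of Corollary \ref{c4.3}, the quantity $\mathrm{H}$ in \eqref{4.8} is finite for every $\ez\in(0,\infty)$ small enough; hence $f$ satisfies \eqref{4.8}. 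That $d\mu_f$ is a $\ro$-Carleson measure with $\|d\mu_f\|_\ro\ls\|f\|_{\bmo}$ is then exactly the content of Proposition \ref{p4.5}, since by definition
$$\|d\mu_f\|_\ro^2=\sup_{B\subset\cx}\frac{1}{V(B)[\ro(V(B))]^2}\int_{\widehat B}|(t^2L)^Me^{-t^2L}f(x)|^2\frac{\,d\mu(x)\,dt}{t}\ls\|f\|_{\bmo}^2.$$

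Next I would prove (ii) $\Rightarrow$ (i) together with the reverse norm estimate $\|f\|_{\bmo}\ls\|d\mu_f\|_\ro$. Given $f\in\cm_\oz^M(\cx)$ satisfying \eqref{4.8} and with $d\mu_f$ a $\ro$-Carleson measure, the idea is to show that $f$ induces a bounded linear functional on $H_{\oz,L}(\cx)$ and then invoke Theorem \ref{t4.1}(ii) to conclude $f\in\bmo$ with the desired control of its norm. Fix $\wz M > M+\frac n4+\frac N2(\frac1{p_\oz}-1)$; for an arbitrary $(\oz,\wz M)$-atom $\az$ supported in a ball $B$, Proposition \ref{p4.6} (whose hypothesis \eqref{4.8} we are assuming, after possibly shrinking $\ez$) gives
$$|\la f,\az\ra|=\bigg|\wz C_M\int_{\cx\times(0,\fz)}(t^2L)^Me^{-t^2L}f(x)\overline{t^2Le^{-t^2L}\az(x)}\frac{\,d\mu(x)\,dt}{t}\bigg|.$$
Now I would run the standard tent-space duality estimate: since $\az$ is an atom, $t^2Le^{-t^2L}\az\in T_\oz(\cx)$ with $\|t^2Le^{-t^2L}\az\|_{T_\oz(\cx)}\sim\|\az\|_{H_{\oz,L}(\cx)}\ls1$; by Theorem \ref{t3.1} decompose it into $T_\oz(\cx)$-atoms $\{\lz_j a_j\}$ with $\widehat{B_j}\supset\supp a_j$ and $\Lambda(\{\lz_j a_j\}_j)\ls\|\az\|_{H_{\oz,L}(\cx)}$. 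Applying the Cauchy--Schwarz inequality on each $\widehat{B_j}$ and using the definition of a $T_\oz(\cx)$-atom together with $\|d\mu_f\|_\ro$, one bounds $|\la f,\az\ra|$ by
$$\sum_j|\lz_j|\|a_j\|_{T_2^2(\cx)}\bigg(\int_{\widehat{B_j}}|(t^2L)^Me^{-t^2L}f(x)|^2\frac{\,d\mu(x)\,dt}{t}\bigg)^{1/2}\ls\sum_j|\lz_j|\,\|d\mu_f\|_\ro\ls\|d\mu_f\|_\ro,$$
exactly as in \eqref{4.14}, using Remark \ref{r3.1}(ii). By Corollary \ref{c4.2} (density of finite atomic combinations) this extends to a bounded functional $\ell$ on $H_{\oz,L}(\cx)$ with $\|\ell\|_{(H_{\oz,L}(\cx))^\ast}\ls\|d\mu_f\|_\ro$. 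Then Theorem \ref{t4.1}(ii) yields $f\in\bmo$ with $\|f\|_{\bmo}\ls\|\ell\|_{(H_{\oz,L}(\cx))^\ast}\ls\|d\mu_f\|_\ro$, and combining the two directions gives the comparability of $\|f\|_{\bmo}$ and $\|d\mu_f\|_\ro$.

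The step I expect to require the most care is verifying that hypothesis \eqref{4.8} is genuinely available in the direction (ii) $\Rightarrow$ (i): one must check that membership in $\cm_\oz^M(\cx)$ plus the Carleson condition (rather than the a priori BMO condition) suffices to run Proposition \ref{p4.6}, and one has to be careful about the admissible range of $\ez$ so that $\wz M$ can be chosen consistently with both Proposition \ref{p4.6} and the atom/molecule constraints. A secondary technical point is justifying the interchange of the sum over the tent-space atomic decomposition with the integral defining $\la f,\az\ra$; this is handled by the fact, recorded in Corollary \ref{c3.1}, that for $t^2Le^{-t^2L}\az\in T_\oz(\cx)\cap T_2^2(\cx)$ the decomposition converges in $T_2^2(\cx)$ as well, which legitimizes pairing term by term against $(t^2L)^Me^{-t^2L}f\in T_2^2(\cx)$ (locally). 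Everything else is a routine repackaging of \eqref{4.14} and Proposition \ref{p4.5}.
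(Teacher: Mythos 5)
Your proposal is correct and follows essentially the same route as the paper: (i) $\Rightarrow$ (ii) via Proposition \ref{p4.5} together with the finiteness argument for \eqref{4.8} from the proof of Corollary \ref{c4.3}, and (ii) $\Rightarrow$ (i) via the Calder\'on reproducing formula of Proposition \ref{p4.6} applied to finite combinations of $(\oz,\wz M)$-atoms, the tent-space atomic decomposition estimate as in \eqref{4.14} with $\|d\mu_f\|_\ro$ in place of $\|g\|_{\bmo}$, density (Corollary \ref{c4.2}) and Theorem \ref{t4.1}(ii). The technical cautions you flag (choosing $\wz M>M+\ez+\frac n4+\frac N2(\frac1{p_\oz}-1)$ consistently, and term-by-term pairing justified by $T_2^2(\cx)$-convergence) are exactly the points implicit in the paper's argument.
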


\begin{proof}
It follows from Proposition \ref{p4.5} and the proof of Corollary
\ref{c4.3} that (i) implies (ii).

Conversely, let $\wz M>M+\ez+\frac n4+ \frac N 2(\frac 1{p_\oz}-1)$.
By Proposition \ref{p4.6}, we have
\begin{equation*}
\la f, g\ra =\wz C_M\int_{\cx\times(0,\fz)} (t^2L)^Me^{-t^2L}f(x)
\overline{t^2Le^{-t^2L}g(x)}\frac{\,d\mu(x)\,dt}{t},
\end{equation*}
where $g$ is any finite combination of $(\oz,\wz M)$-atoms. Then
similarly to the estimate of \eqref{4.14}, we obtain that
\begin{equation}\label{4.15}
|\la f, g\ra|\ls \|d\mu_f\|_{\ro}\|g\|_{H_{\oz,L}(\cx)}.
\end{equation}
Since, by Corollary \ref{c4.2}, $H_{\oz,\rm fin}^{{\rm at},\wz
M}(\cx)$ is dense in $H_{\oz,L}(\cx)$, this together with Theorem
\ref{t4.1} and \eqref{4.15} implies that $f\in
(H_{\oz,L}(\cx))^\ast=\bmo$, which completes the proof of Theorem
\ref{t4.2}.
\end{proof}

\section{Characterizations of Orlicz-Hardy spaces \label{s5}}

\hskip\parindent In this section, we characterize  the Orlicz-Hardy spaces
by atoms, molecules and the Lusin-area function
associated with the Poisson semigroup.
Let us begin with some notions.

\begin{defn}\label{d5.1} Let $L$ satisfy Assumptions (A) and (B),
$\oz$ satisfy Assumption (C) and $M>\frac n2(\frac{1}{p_\oz}-\frac
12)$. A distribution $f\in (\bmo)^\ast$ is said to be in the space
$H_{\oz,{\rm at}}^M(\cx)$ if there exist $\{\lz_j\}_{j=1}^\fz\subset
\cc$ and $(\oz,M)-$atoms $\{\az_j\}_{j=1}^\fz$ such that
$f=\sum_{j=1}^\fz\lz_j\az_j$ in the norm of $(\bmo)^\ast$ and
$\sum_{j=1}^\fz V(B_j)\oz(\frac{|\lz_j|}{V(B_j)\ro(V(B_j))})<\fz,$
where for each $j$, $\supp \az_j\subset B_j$.

Moreover, for any $f\in H_{\oz,{\rm at}}^M(\cx)$, its norm is defined by
$\|f\|_{H_{\oz,{\rm at}}^M(\cx)}\equiv \inf \Lambda(\{\lz_j\az_j\}_j),$
where $\Lambda(\{\lz_j\az_j\}_j)$ is the same as in Proposition
\ref{p4.3} and the infimum is taken over all possible decompositions
of $f$.
\end{defn}

\begin{defn}\label{d5.2} Let $L$ satisfy Assumptions (A) and (B),
$\oz$ satisfy Assumption (C), $M>\frac n2(\frac{1}{p_\oz}-\frac 12)$
and $\ez>n(1/p_\oz-1/{p_\oz^+})$. A distribution $f\in (\bmo)^\ast$
is said to be in the space $H_{\oz,{\rm mol}}^{M,\ez}(\cx)$ if there
exist $\{\lz_j\}_{j=1}^\fz\subset \cc$ and $(\oz,M,\ez)-$molecules
$\{\bz_j\}_{j=1}^\fz$ such that $f=\sum_{j=1}^\fz\lz_j\bz_j$ in the
norm of $(\bmo)^\ast$ and
$\sum_{j=1}^\fz V(B_j)\oz(\frac{|\lz_j|}{V(B_j)\ro(V(B_j))})<\fz,$
where for each $j$, $\bz_j$ is associated to the ball $B_j$.

Moreover, for any $f\in H_{\oz,{\rm mol}}^{M,\ez}(\cx)$, its norm is defined by
$\|f\|_{H_{\oz,{\rm mol}}^{M,\ez}(\cx)}\equiv \inf \Lambda(\{\lz_j\bz_j\}_j),$
where $\Lambda(\{\lz_j\bz_j\}_j)$ is the same as in Proposition
\ref{p4.3} and the infimum is taken over all possible decompositions
of $f$.
\end{defn}

For all $f\in L^2(\cx)$ and $x\in\cx$, define the Lusin area function associated
to the Poisson semigroup by
\begin{equation}\label{5.1}
\cs_Pf(x)\equiv \bigg(\iint_{\Gamma(x)}|t\sqrt L e^{-t\sqrt L}f(y)|^2
\frac{\,d\mu(y)}{V(x,t)}\frac{\,dt}{t}\bigg)^{1/2}.
\end{equation}

Similarly to Definition \ref{d4.1}, we define the space
$H_{\oz,\cs_P}(\cx)$ as follows.

\begin{defn}\label{d5.3}
Let $L$ satisfy Assumptions (A) and (B), $\oz$ satisfy Assumption
(C) and $\overline{\car(L)}$ be as in \eqref{4.2}. A function $f\in
\overline {\car(L)}$ is said to be in $\wz H_{\oz,\,\cs_P}(\cx)$ if
$\cs_P(f)\in L(\oz)$; moreover, define
$$\|f\|_{H_{\oz,\cs_P}(\cx)}\equiv \|\cs_P(f)\|_{L(\oz)}
=\inf\lf\{\lz>0:\int_{\cx}\oz\lf
(\frac{\cs_P(f)(x)}{\lz}\r)\,d\mu(x)\le 1\r\}.$$ The Orlicz-Hardy space
$H_{\oz,\cs_P}(\cx)$ is defined to be the  completion of $\wz
H_{\oz,\cs_P}(\cx)$  in the norm $\|\cdot\|_{H_{\oz,\cs_P}(\cx)}.$
\end{defn}

We now show that the spaces $H_{\oz,L}(\cx)$, $H_{\oz,{\rm at}}^M(\cx)$,
$H_{\oz,{\rm mol}}^{M,\ez}(\cx)$ and $H_{\oz,\cs_P}(\cx)$
coincide with equivalent norms.

\subsection{Atomic and molecular characterizations\label{s5.1}}

\hskip\parindent In this subsection, we establish the atomic and
the molecular characterizations
of the Orlicz-Hardy spaces. We start with the following auxiliary result.

\begin{prop}\label{p5.1} Let $L$ satisfy Assumptions (A) and (B) and
$\oz$ satisfy Assumption (C).
Fix $t\in (0,\fz)$ and $\wz B\equiv B(x_0,R)$. Then there exists a positive
constant $C(t,R,\wz B)$ such that for all $\phi\in L^2(\wz B)$, $t^2L e^{-t^2L}
\phi\in\bmo$ and
$$\|t^2L e^{-t^2L}\phi\|_{\bmo}\le C(t,R,\wz B)\|\phi\|_{L^2(\wz B)}.$$
\end{prop}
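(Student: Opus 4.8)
The plan is to set $g\equiv t^2Le^{-t^2L}\phi$ (with $\phi$ extended by $0$ off $\wz B$) and to check the two clauses in the definition of $\bmoo$: that $g\in\cm_\oz^M(\cx)$, and that $\|g\|_{\bmoo}\le C(t,R,\wz B)\|\phi\|_{L^2(\wz B)}$. For membership in $\cm_\oz^M(\cx)$ I would first note that, by the spectral calculus for the self-adjoint $L$, $g\in L^2(\cx)$ with $\|g\|_{L^2(\cx)}\le e^{-1}\|\phi\|_{L^2(\wz B)}$, since $\sup_{\lz>0}t^2\lz e^{-t^2\lz}=e^{-1}$. Given $\ez>n(1/p_\oz-1/p_\oz^+)$ and $\psi=L^M\nu\in\cm_\oz^{M,\ez}(L)$, the definition of $\|\psi\|_{\cm_\oz^{M,\ez}(L)}$ bounds $\|\psi\|_{L^2(U_j(B(x_0,1)))}$ by $2^{-j\ez}[V(x_0,2^j)]^{-1/2}[\ro(V(x_0,2^j))]^{-1}\|\psi\|_{\cm_\oz^{M,\ez}(L)}$ for every $j\in\zz_+$; summing over $j$ and using $V(x_0,2^j)\ge V(x_0,1)$, the lower type of $\ro$ (so $\ro(V(x_0,2^j))\gs\ro(V(x_0,1))$), and $\ez>0$, I get $\|\psi\|_{L^2(\cx)}\ls_{x_0,\ez}\|\psi\|_{\cm_\oz^{M,\ez}(L)}$. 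Hence $\psi\mapsto\la g,\psi\ra$ is bounded on $\cm_\oz^{M,\ez}(L)$, so $g\in(\cm_\oz^{M,\ez}(L))^\ast$ for every admissible $\ez$, i.e. $g\in\cm_\oz^M(\cx)$.

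For the seminorm, fix a ball $B\equiv B(x_B,r_B)$ and put $d\equiv d(x_0,x_B)$; everything reduces to estimating $(I-e^{-r_B^2L})^Mg$ on $B$. Writing $I-e^{-r_B^2L}=\int_0^{r_B^2}Le^{-sL}\,ds$ one has $(I-e^{-r_B^2L})^Mg=t^2\int_{(0,r_B^2)^M}L^{M+1}e^{-(t^2+s_1+\cdots+s_M)L}\phi\,ds_1\cdots ds_M$, and applying the Davies--Gaffney bounds of Lemma \ref{l2.1} to $(s^2L)^{M+1}e^{-s^2L}$ with $s^2=t^2+s_1+\cdots+s_M\in[t^2,t^2+Mr_B^2]$ gives a bound carrying the gain $(r_B/t)^{2M}$; expanding $(I-e^{-r_B^2L})^M$ binomially and applying Lemma \ref{l2.1} to each $((t^2+kr_B^2)L)e^{-(t^2+kr_B^2)L}$, $0\le k\le M$, gives a second bound without that gain. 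Together they give
\begin{equation*}
\|(I-e^{-r_B^2L})^Mg\|_{L^2(B)}\ls\min\{(r_B/t)^{2M},1\}\,
\exp\lf\{-\frac{\dist(\wz B,B)^2}{C(t^2+r_B^2)}\r\}\|\phi\|_{L^2(\wz B)},
\end{equation*}
with $\dist(\wz B,B)\ge\max\{0,d-R-r_B\}$. For the normalizing factor I set $h(s)\equiv\ro(s)s^{1/2}$; by Proposition \ref{p2.1} (using $p_\oz\le1$) $h$ has strictly positive lower and upper type indices, so $h(s)\gs\min\{s^{1/2},s^{1/p_\oz-1/2}\}$, while the strong homogeneity \eqref{2.3}, \eqref{2.4} give $V(B)=V(x_B,r_B)\gs s_\ast:=(1+d/R)^{-N}\min\{1,(r_B/R)^n\}V(\wz B)$ --- when $r_B\le R$ one applies \eqref{2.4} at scale $R$, so that the $d$-factor sits at scale $R$ rather than $r_B$. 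Since $s\mapsto\min\{s^{1/2},s^{1/p_\oz-1/2}\}$ is increasing, $h(V(B))\gs\min\{s_\ast^{1/2},s_\ast^{1/p_\oz-1/2}\}$, and $\|g\|_{\bmoo}=\sup_B[h(V(B))]^{-1}\|(I-e^{-r_B^2L})^Mg\|_{L^2(B)}$ is thereby reduced to the elementary claim that
$$\frac{\min\{(r_B/t)^{2M},1\}\,\exp\lf\{-\dist(\wz B,B)^2/(C(t^2+r_B^2))\r\}}{\min\{s_\ast^{1/2},s_\ast^{1/p_\oz-1/2}\}}$$
is bounded by a constant depending only on $t$, $R$, $n$, $N$, $p_\oz$ and $V(\wz B)$.

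I would prove this last claim by splitting into $r_B\le R$ versus $r_B>R$ and, inside each, into ``$d\ls\max\{R,r_B\}$'' versus ``$d$ much larger''. In the first subcase $\dist(\wz B,B)$ may be $0$, but then $d$ is comparable to $\max\{R,r_B\}$ and the hypothesis $M>\frac n2(\frac1{p_\oz}-\frac12)$ --- i.e. $2M>n(1/p_\oz-1/2)$ --- ensures that the factor $(r_B/t)^{2M}$ absorbs the negative power of $r_B$ coming from $1/\min\{s_\ast^{1/2},s_\ast^{1/p_\oz-1/2}\}$; in the second subcase $\dist(\wz B,B)\gs d$ and the Gaussian factor dominates the polynomial-in-$d$ growth of the reciprocal of $\min\{s_\ast^{1/2},s_\ast^{1/p_\oz-1/2}\}$. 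Either way the resulting bound is $C(t,R,\wz B)\|\phi\|_{L^2(\wz B)}$, and taking the supremum over $B$ completes the proof.

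The step I expect to be the main obstacle is obtaining this estimate uniformly over \emph{all} balls $B$ simultaneously, since two opposite regimes must be reconciled: arbitrarily small balls, where $\ro(V(B))[V(B)]^{1/2}\to0$ and one is forced to exploit the smoothing gain $(r_B/t)^{2M}$ of $(I-e^{-r_B^2L})^M$ (this is precisely where $M>\frac n2(\frac1{p_\oz}-\frac12)$ enters), and balls of large radius centred far from $\wz B$, where the lower bound for $V(B)$ degrades and one must rely on the Davies--Gaffney decay, using that in that regime the relative sizes of $d$ and $r_B$ are constrained. Carefully tracking the interplay of $r_B$, $d$ and $V(B)$ through these cases is the real work; the rest is routine manipulation with the type estimates for $\oz$ and $\ro$.
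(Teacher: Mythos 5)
Your overall architecture is the same as the paper's: estimate $\|(I-e^{-r_B^2L})^M t^2Le^{-t^2L}\phi\|_{L^2(B)}$ by the two mechanisms (the $(r_B/t)^{2M}$ gain from $I-e^{-r_B^2L}=\int_0^{r_B^2}Le^{-sL}\,ds$ combined with Davies--Gaffney, versus the plain $L^2$ bound with a Gaussian factor), then compare $[V(B)]^{1/2}\ro(V(B))$ with $[V(\wz B)]^{1/2}\ro(V(\wz B))$ via doubling and the type of $\ro$, splitting into cases on $r_B$ versus $R$ and on $d=d(x_0,x_B)$. Your preliminary verification that $t^2Le^{-t^2L}\phi\in\cm_\oz^M(\cx)$ (via the bounded inclusion $\cm_\oz^{M,\ez}(L)\subset L^2(\cx)$) is a correct and worthwhile addition which the paper leaves implicit.

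However, the ``elementary claim'' to which you reduce everything is false in general, so there is a genuine gap. Your volume lower bound $V(B)\gs s_\ast=(1+d/R)^{-N}\min\{1,(r_B/R)^n\}V(\wz B)$ comes from applying \eqref{2.4} at scale $R$, and it is far too lossy when $r_B\ge R$ and $x_B$ is far from $x_0$ but $B$ is close to (or contains) $\wz B$: take $r_B\ge\max\{t,R\}$ and $d\sim r_B\to\fz$ with $d\le R+r_B$. Then $\dist(\wz B,B)$ may vanish, so your numerator is $\sim 1$, while $s_\ast\sim (r_B/R)^{-N}V(\wz B)\to 0$, hence the ratio you claim is bounded tends to infinity whenever $N>0$ (and there are doubling spaces, e.g. $d\mu=|x|^{-\bz}\,dx$ on $\rn$ with $0<\bz<n$, in which \eqref{2.4} forces $N>0$). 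In particular the asserted absorption by $(r_B/t)^{2M}$ cannot work there, since $\min\{(r_B/t)^{2M},1\}=1$ in that regime. The statement itself is of course true, and the fix stays within your framework: in the regime where the Gaussian gives nothing one has $\wz B\subset B(x_B,d+R)$, so doubling alone (no $N$) gives $V(B)\gs\min\{1,(r_B/(d+R))^n\}V(\wz B)$, which for $r_B\ge R$ and $d\ls r_B$ yields $V(B)\gs V(\wz B)$ with no loss; equivalently, argue as the paper does in its Case i): either $\wz B\subset 2^3B$, whence $[V(\wz B)]^{1/2}\ro(V(\wz B))\ls[V(B)]^{1/2}\ro(V(B))$ directly, or $\wz B\subset 2^{k+1}B\setminus2^{k-1}B$ with $k\ge3$, whence $\dist(\wz B,B)\gs 2^kr_B\gs 2^kR$ and the Gaussian factor dominates the doubling loss $2^{kn(1/p_\oz-1/2)}$. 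With that correction your case analysis goes through and matches the paper's proof.
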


\begin{proof} Let $M> \frac n2(\frac{1}{p_\oz}-\frac 12)$.
For any ball $B\equiv B(x_B,r_B)$, let
$$\mathrm{H}\equiv \frac{1}{\ro(V(B))}
\lf(\frac{1}{V(B)}\int_{B}|(I-e^{-r_B^2L})^Mt^2L
e^{-t^2L}\phi(x)|^2\,d\mu(x)\r)^{1/2}.$$

We now consider two cases. {\emph{Case i)}} $r_B\ge R$. In this
case, either $\wz B\subset 2^{3}B$ or there exists $k\ge 3$ such
that $\wz B\subset (2^{k+1}B\setminus 2^{k-1}B)$. If $\wz B\subset
2^{3}B$, we have $[V(\wz B)]^{1/2}\ro(V(\wz B))\ls
[V(B)]^{1/2}\ro(V(B))$. This together with the
$L^2(\cx)$-boundedness of the operator $t^2Le^{-t^2L}$ (see Lemma
\ref{l2.1}) yields that
$\mathrm{H}\ls\frac{\|\phi\|_{L^2(\wz B)}}{[V(\wz B)]^{1/2}\ro(V(\wz B))},$
which is a desired estimate.

If $B\subset 2^{k+1}B\setminus 2^{k-1}B$ for some $k\ge 3$, we then
have $\wz B\subset 2^{k+1}B$ and $\dist(\wz B,B)\ge 2^{k-2}r_B\ge
2^{k-2}R$. Thus, by the Davies-Gaffney estimate, we obtain
\begin{eqnarray*}
\mathrm{H}&&\ls \frac{2^{n(k+2)(1/p_\oz-1/2)}}{[V(2^{k+2}B)]^{1/2}\ro(V(2^{k+2}B))}
\exp\lf\{-\frac{\dist(\wz B,B)^2}{t^2}\r\}\|\phi\|_{L^2(\wz B)}\\
&&\ls\frac{2^{nk(1/p_\oz-1/2)}\|\phi\|_{L^2(\wz B)}}{[V(\wz B)]^{1/2}\ro(V(\wz B))}
\lf(\frac{t}{2^{k}R}\r)^{n(1/p_\oz-1/2)}\ls
\lf(\frac{t}{R}\r)^{n(1/p_\oz-1/2)}\frac{\|\phi\|_{L^2(\wz B)}}
{[V(\wz B)]^{1/2}\ro(V(\wz B))},
\end{eqnarray*}
which is also a desired estimate.

{\emph{Case ii)}} $r_B<R$. In this case, we further consider two
subcases. If $d(x_B,x_0)\le 4R$, then $\wz B\subset B(x_B,5R)$ and
hence
\begin{equation}\label{5.2}
[V(\wz B)]^{1/2}\ro(V(\wz B))\ls \lf(\frac{R}{r_B}\r)^{n(1/p_\oz-1/2)}
[V(B)]^{1/2}\ro(V(B)).
\end{equation}
On the other hand, noticing that $I-e^{-r^2_BL}=\int_0^{r_B^2}Le^{-rL}\,dr$, by
the Minkowski inequality and the $L^2(\cx)$-boundedness of the
operator $t^2Le^{-t^2L}$, we have
\begin{eqnarray}\label{5.3}
\quad\quad&& \lf(\int_{B}|(I-e^{-r_B^2L})^Mt^2L e^{-t^2L}
\phi(x)|^2\,d\mu(x)\r)^{1/2}\\
&&\hs=\lf(\int_{B}\lf|\int_0^{r_B^2}\cdots\int_0^{r_B^2} t^2L^{M+1}
e^{-(r_1+\cdots+r_M+t^2)L}\phi(x)\,dr_1\cdots\,dr_M\r|^2\,d\mu(x)\r)^{1/2}\nonumber\\
&&\hs\ls\|\phi\|_{L^2(\wz B)}\int_0^{r_B^2}\cdots\int_0^{r_B^2}
\frac{t^2}{(r_1+\cdots+r_M+t^2)^{M+1}}\,dr_1\cdots\,dr_M
\ls \lf(\dfrac{r_B}{t}\r)^{2M}\|\phi\|_{L^2(\wz B)}.\nonumber
\end{eqnarray}
By $2M>n(\frac{1}{ p_\oz}-\frac 12)$ and
the estimates \eqref{5.2} and \eqref{5.3},
 we obtain
$$\mathrm{H}\ls \lf(\frac{R}{t}\r)^{2M}\frac{\|\phi\|_{L^2(\wz B)}}
{[V(\wz B)]^{1/2}\ro(V(\wz B))},$$
which is also a desired estimate.

If $d(x_B,x_0)> 4R$, then there exists $k_0\ge 3$ such
that $d(x_B,x_0)\in (2^{k_0-1}R,2^{k_0}R]$.
Thus, $\wz B\subset B(x_B,2^{k_0+1}R)$, $\dist(\wz B,B)\ge 2^{k_0-2}R$ and
\begin{equation}\label{5.4}
[V(\wz B)]^{1/2}\ro(V(\wz B))\ls
\lf(\frac{2^{k_0}R}{r_B}\r)^{n(1/p_\oz-1/2)}[V(B)]^{1/2}\ro(V(B)).
\end{equation}
By the Davies-Gaffney estimate, we further obtain
\begin{eqnarray*}
&& \lf(\int_{B}|(I-e^{-r_B^2L})^Mt^2L e^{-t^2L}\phi(x)|^2\,d\mu(x)\r)^{1/2}\\
&&\hs=\lf(\int_{B}\lf|\int_0^{r_B^2}\cdots\int_0^{r_B^2} t^2L^{M+1}
e^{-(r_1+\cdots+r_M+t^2)L}\phi(x)\,dr_1\cdots\,dr_M\r|^2\,d\mu(x)\r)^{1/2}\nonumber\\
&&\hs\ls\int_0^{r_B^2}\cdots\int_0^{r_B^2}\frac{t^2}{(r_1+\cdots+r_M+t^2)^{M+1}}\\
&&\hs\hs \times \exp\lf\{-\frac{\dist(\wz B,B)^2}
{r_1+\cdots+r_M+t^2}\r\}\|\phi\|_{L^2(\wz B)}
\,dr_1\cdots\,dr_M\nonumber\\
&&\hs\ls \lf(\frac{r_B}{t}\r)^{2M}\lf(\frac{t+r_B}{2^{k_0}R}\r)^{n(1/p_\oz-1/2)}
\|\phi\|_{L^2(\wz B)},
\end{eqnarray*}
which, together with \eqref{5.4}, $r_B<R$ and $2M>n(\frac{1}{ p_\oz}-\frac 12)$,
yields that
$$\mathrm{H}\ls\lf(\frac{R+t}{t}\r)^{2M}\frac{\|\phi\|_{L^2(\wz B)}}
{[V(\wz B)]^{1/2}\ro(V(\wz B))}.$$
This is also a desired estimate, which completes
the proof of Proposition \ref{p5.1}.
\end{proof}

From Proposition \ref{p5.1}, it follows that for each $f\in (\bmo)^\ast$,
$t^2Le^{-t^2L}f$ is well defined. In fact, for any ball $B\equiv B(x_B,r_B)$ and
$\phi\in L^2(B)$, by Proposition \ref{p5.1}, we have
$$|\la t^2Le^{-t^2L}f,\phi\ra|\equiv|\la f,t^2Le^{-t^2L}\phi\ra|
\le C(t, r_B, B)\|\phi\|_{L^2(B)}\|f\|_{(\bmo)^\ast},$$ which
implies that $t^2Le^{-t^2L}f\in L^2_{\loc}(\cx)$ in the sense of
distributions. Moreover, recalling that the atomic decomposition
obtained in Corollary \ref{c4.1} holds in $H_{\oz,L}(\cx)$, then by
Theorem \ref{t4.1}, we see the atomic decomposition also holds in
$(\bmo)^\ast$. Applying these observations, similarly to the proof
of \cite[Theorem 5.1]{jy}, we have the following result. We omit the
details here.

\begin{thm}\label{t5.1}
Let $L$ satisfy Assumptions (A) and (B), $\oz$ satisfy Assumption (C),
$M>\frac n2(\frac{1}{ p_\oz}-\frac 12)$  and $\ez>n(1/p_\oz-1/{p_\oz^+})$.
Then the spaces $H_{\oz,L}(\cx)$, $H^{M}_{\oz,{\rm at}}(\cx)$
and $H^{M,\ez}_{\oz,{\rm mol}}(\cx)$
coincide with equivalent norms.
\end{thm}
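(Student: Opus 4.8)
The plan is to prove the chain of continuous embeddings
$$H_{\oz,L}(\cx)\subseteq H^{M}_{\oz,{\rm at}}(\cx)\subseteq H^{M,\ez}_{\oz,{\rm mol}}(\cx)\subseteq H_{\oz,L}(\cx),$$
each with control of the corresponding (quasi-)norms; combined, they give the asserted equivalence. First I would record the one fact used throughout: $H_{\oz,L}(\cx)$ embeds continuously into $(\bmo)^\ast$. Indeed, by Theorem \ref{t4.1} every $g\in\bmo$ acts on $H_{\oz,L}(\cx)$ with $|\la g,f\ra|\ls\|g\|_{\bmo}\|f\|_{H_{\oz,L}(\cx)}$, so each $f\in H_{\oz,L}(\cx)$ determines an element of $(\bmo)^\ast$ of norm $\ls\|f\|_{H_{\oz,L}(\cx)}$, and this assignment is injective because, again by Theorem \ref{t4.1}, $\bmo=(H_{\oz,L}(\cx))^\ast$ separates the points of $H_{\oz,L}(\cx)$. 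In particular, convergence in $H_{\oz,L}(\cx)$ forces convergence in $(\bmo)^\ast$.

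For the first embedding I would take $f\in H_{\oz,L}(\cx)$ and apply Corollary \ref{c4.1} to obtain $(\oz,M)$-atoms $\{\az_j\}_{j=1}^\fz$ and $\{\lz_j\}_{j=1}^\fz\subset\cc$ with $f=\sum_{j=1}^\fz\lz_j\az_j$ in $H_{\oz,L}(\cx)$ and $\Lambda(\{\lz_j\az_j\}_j)\le C\|f\|_{H_{\oz,L}(\cx)}$. By the fact just recorded this decomposition also converges in $(\bmo)^\ast$, so Definition \ref{d5.1} yields $f\in H^{M}_{\oz,{\rm at}}(\cx)$ with $\|f\|_{H^{M}_{\oz,{\rm at}}(\cx)}\le\Lambda(\{\lz_j\az_j\}_j)\le C\|f\|_{H_{\oz,L}(\cx)}$. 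The second embedding is immediate: each $(\oz,M)$-atom is an $(\oz,M,\ez)$-molecule and $\Lambda$ is given by the same formula, so every admissible atomic decomposition is an admissible molecular one with the same value of $\Lambda$, whence $\|f\|_{H^{M,\ez}_{\oz,{\rm mol}}(\cx)}\le\|f\|_{H^{M}_{\oz,{\rm at}}(\cx)}$.

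The substance is the third embedding. Given $f\in H^{M,\ez}_{\oz,{\rm mol}}(\cx)$, I would fix an admissible molecular decomposition $f=\sum_{j=1}^\fz\lz_j\bz_j$ in $(\bmo)^\ast$ with $\Lambda(\{\lz_j\bz_j\}_j)$ as close as desired to $\|f\|_{H^{M,\ez}_{\oz,{\rm mol}}(\cx)}$ and let $B_j$ be the ball associated with $\bz_j$. Since $\cs_L$ is subadditive, for any finite sum the Minkowski inequality gives $\cs_L(\sum_{j=N}^{N'}\lz_j\bz_j)\le\sum_{j=N}^{N'}|\lz_j|\cs_L(\bz_j)$, so the subadditivity and monotonicity of $\oz$ together with estimate \eqref{4.5} from the proof of Proposition \ref{p4.1} give, for every $\lz>0$,
$$\int_\cx\oz\lf(\frac{\cs_L(\sum_{j=N}^{N'}\lz_j\bz_j)(x)}{\lz}\r)\,d\mu(x)\ls\sum_{j\ge N}V(B_j)\oz\lf(\frac{|\lz_j|}{\lz V(B_j)\ro(V(B_j))}\r).$$
Taking $N=1$, $N'=\fz$ and choosing $\lz$ comparable to $\Lambda(\{\lz_j\bz_j\}_j)$, while using that $\oz$ is of strictly lower type $p_\oz$ to absorb the implied constant into $\lz$, makes the right-hand side $\le1$, so $\|\sum_{j=1}^\fz\lz_j\bz_j\|_{H_{\oz,L}(\cx)}\ls\Lambda(\{\lz_j\bz_j\}_j)$; applying the same bound to the partial differences, whose associated tail sums vanish as $N\to\fz$, shows that $\{\sum_{j=1}^N\lz_j\bz_j\}_N$ is Cauchy in $H_{\oz,L}(\cx)$, hence converges there to some $g$ with $\|g\|_{H_{\oz,L}(\cx)}\ls\Lambda(\{\lz_j\bz_j\}_j)$. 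By the fact recorded above the series converges to $g$ also in $(\bmo)^\ast$, while by hypothesis it converges to $f$ in $(\bmo)^\ast$; hence $g=f$, and taking the infimum over admissible decompositions gives $\|f\|_{H_{\oz,L}(\cx)}\le C\|f\|_{H^{M,\ez}_{\oz,{\rm mol}}(\cx)}$.

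The hard part will not be any single inequality but the reconciliation of the three topologies at play — convergence in $H_{\oz,L}(\cx)$, in $L^2(\cx)$ (as used in Proposition \ref{p4.3} and the proof of Proposition \ref{p4.2}), and in $(\bmo)^\ast$ (the mode of convergence built into Definitions \ref{d5.1} and \ref{d5.2}) — together with the quantitative Luxemburg-norm/lower-type bookkeeping required to pass between $\|\cdot\|_{H_{\oz,L}(\cx)}$ and the functional $\Lambda$. Once the embedding $H_{\oz,L}(\cx)\hookrightarrow(\bmo)^\ast$ and estimate \eqref{4.5} are in hand, the remaining arguments are routine and parallel \cite[Theorem 5.1]{jy}.
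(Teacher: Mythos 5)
Your proposal is correct and, in substance, follows the route the paper intends: the inclusion $H_{\oz,L}(\cx)\subset H^{M}_{\oz,{\rm at}}(\cx)$ from Corollary \ref{c4.1} combined with Theorem \ref{t4.1} (which upgrades convergence in $H_{\oz,L}(\cx)$ to convergence in $(\bmo)^\ast$), the trivial inclusion of the atomic space into the molecular one, and the converse inclusion from the uniform molecular estimate \eqref{4.5} of Proposition \ref{p4.1} together with the lower-type bookkeeping for $\Lambda$. The one genuine difference is organizational: the paper (deferring to \cite[Theorem 5.1]{jy}) prepares Proposition \ref{p5.1} precisely so that $t^2Le^{-t^2L}f$, and hence $\cs_L(f)$, makes sense directly for a distribution $f\in(\bmo)^\ast$ given by a molecular series, and then dominates $\cs_L(f)$ by $\sum_j|\lz_j|\cs_L(\bz_j)$; you never apply $\cs_L$ to $f$ itself, but instead show the partial sums are Cauchy in the completion $H_{\oz,L}(\cx)$ and identify the limit with $f$ through the pairing with $\bmo$. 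Your variant is legitimate and somewhat leaner (Proposition \ref{p5.1} is not needed for this theorem on your route), and your tail estimate is sound once one notes that the upper type $1$ of $\oz$ makes the full series $\sum_j V(B_j)\oz(|\lz_j|/(\lz V(B_j)\ro(V(B_j))))$ finite for each fixed $\lz>0$, so its tails vanish. The only thin spot is your justification of the injectivity of $H_{\oz,L}(\cx)\hookrightarrow(\bmo)^\ast$: saying it is injective ``because $\bmo=(H_{\oz,L}(\cx))^\ast$ separates points'' is not a consequence of Theorem \ref{t4.1} alone, since duals of quasi-Banach spaces need not separate points; the paper tacitly makes the same identification, so this is a shared rather than a new lacuna, but on your route it carries more weight and should either be stated as a convention or argued explicitly.
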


\subsection{A characterization by the Lusin area function $\cs_P$\label{s5.2}}

\hskip\parindent In this subsection, we characterize the space $H_{\oz,L}(\cx)$
by the Lusin area function $\cs_P$ as in \eqref{5.1}. We start with the
following auxiliary conclusion.

\begin{lem}\label{l5.1}
Let $K\in \zz_+$. Then the operator $(t\sqrt L)^{K}e^{-t\sqrt L}$
is bounded on $L^2(\cx)$ uniformly in $t$. Moreover, there exists a
positive constant $C$ such that
for all closed sets $E,\,F$ in $\cx$ with $\dist(E,F)>0$, all $t>0$ and $f\in L^2(E)$,
$$\|(t\sqrt L)^{2K}e^{-t\sqrt L}f\|_{L^2(F)}+
\|(t\sqrt L)^{2K+1}e^{-t\sqrt L}f\|_{L^2(F)}\le C
\lf(\frac{t}{\dist(E,F)}\r)^{2K+1}\|f\|_{L^2(E)}.$$
\end{lem}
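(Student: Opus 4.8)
The plan is to reduce both assertions to the heat semigroup $\{e^{-sL}\}_{s>0}$, for which the Davies--Gaffney estimates of Lemma \ref{l2.1} are available, by means of the classical subordination formula. The uniform $L^2(\cx)$-boundedness of $(t\sqrt L)^Ke^{-t\sqrt L}$ is immediate from the spectral theorem: since $L$ is nonnegative self-adjoint,
\[
\|(t\sqrt L)^Ke^{-t\sqrt L}\|_{L^2(\cx)\to L^2(\cx)}\le\sup_{\lz\ge0}\lz^Ke^{-\lz}<\fz,
\]
with a bound independent of $t$.

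For the off-diagonal estimate I would start from the subordination identity $e^{-x}=\pi^{-1/2}\int_0^\fz u^{-1/2}e^{-u}e^{-x^2/(4u)}\,du$ (valid for $x\ge0$), which after applying the spectral calculus of $L$ with $x=t\sqrt L$ gives
\[
e^{-t\sqrt L}=\frac1{\sqrt\pi}\int_0^\fz u^{-1/2}e^{-u}e^{-\frac{t^2}{4u}L}\,du .
\]
Differentiating in $t$ (this step is discussed below) yields $t\sqrt Le^{-t\sqrt L}=\frac2{\sqrt\pi}\int_0^\fz u^{-1/2}e^{-u}\lf(\frac{t^2}{4u}L\r)e^{-\frac{t^2}{4u}L}\,du$; multiplying by $(t^2L)^K$ and writing $t^2L=4u\cdot\frac{t^2}{4u}L$, I arrive at
\[
(t\sqrt L)^{2K}e^{-t\sqrt L}=\frac{4^K}{\sqrt\pi}\int_0^\fz u^{K-1/2}e^{-u}\lf(\frac{t^2}{4u}L\r)^{K}e^{-\frac{t^2}{4u}L}\,du,
\]
\[
(t\sqrt L)^{2K+1}e^{-t\sqrt L}=\frac{2\cdot4^K}{\sqrt\pi}\int_0^\fz u^{K-1/2}e^{-u}\lf(\frac{t^2}{4u}L\r)^{K+1}e^{-\frac{t^2}{4u}L}\,du .
\]

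Next I would apply Lemma \ref{l2.1} to the families $\{(sL)^Ke^{-sL}\}_{s>0}$ and $\{(sL)^{K+1}e^{-sL}\}_{s>0}$ with $s=t^2/(4u)$: for $f\in L^2(E)$ and $j\in\{K,K+1\}$, $\|(sL)^je^{-sL}f\|_{L^2(F)}\le C\exp\{-\dist(E,F)^2/(Cs)\}\|f\|_{L^2(E)}$, so after the substitution the Gaussian factor becomes $\exp\{-c\,u\,\dist(E,F)^2/t^2\}$ for some $c>0$. Writing $r\equiv\dist(E,F)$ and inserting this into each representation, the operator estimate reduces to the elementary scalar bound
\[
\int_0^\fz u^{K-1/2}e^{-u}e^{-cur^2/t^2}\,du=\Gamma(K+1/2)\,(1+cr^2/t^2)^{-(K+1/2)}\ls(t/r)^{2K+1},
\]
and adding the contributions of the two exponents gives precisely the claimed inequality.

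The only step that needs genuine care is the differentiation under the integral sign and, more generally, the interchange of the weakly convergent operator-valued integrals above with the estimates of Lemma \ref{l2.1}. This is legitimate because $\{e^{-sL}\}_{s>0}$ is analytic with $\|Le^{-sL}\|_{L^2(\cx)\to L^2(\cx)}\ls s^{-1}$, so the integrand appearing in $\partial_te^{-t\sqrt L}$ is dominated in operator norm by a constant multiple of $t^{-1}u^{-1/2}e^{-u}$, which is integrable on $(0,\fz)$; alternatively one may carry out the whole computation after pairing with an arbitrary $g\in L^2(F)$. I expect this routine bookkeeping --- together with tracking how the substitution $s=t^2/(4u)$ turns the Gaussian heat-kernel decay into the Poisson-type polynomial decay once the Gamma integral is evaluated --- to be the main, and only mild, technical obstacle.
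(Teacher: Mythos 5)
Your proof is correct and follows essentially the same route as the paper: the subordination formula for $e^{-t\sqrt L}$ combined with the Davies--Gaffney estimates of Lemma \ref{l2.1} and the Minkowski inequality, the Gamma integral then yielding the decay $\lf(t/\dist(E,F)\r)^{2K+1}$. The only differences are cosmetic: the paper quotes the even-power off-diagonal bound from \cite{hm1,hlmmy} and gets the uniform $L^2(\cx)$-boundedness again via subordination, whereas you derive both directly (the latter by the spectral theorem), which is, if anything, cleaner and more self-contained.
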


\begin{proof}
It was proved in \cite[Lemma 5.1]{hm1} and \cite[Lemma 4.15]{hlmmy} that
$$\|(t\sqrt L)^{2K}e^{-t\sqrt L}f\|_{L^2(F)}\ls
\lf(\frac{t}{\dist(E,F)}\r)^{2K+1}\|f\|_{L^2(E)}.$$

To establish a similar estimate for $(t\sqrt L)^{2K+1}e^{-t\sqrt L}f$,
we first notice that the subordination formula
\begin{equation}\label{5.5}
e^{-t\sqrt L}f=\frac{1}{\sqrt \pi}\int_0^\fz \frac{e^{-u}}{\sqrt u}
e^{-\frac{t^2}{4u}L}f\,du
\end{equation}
implies that
$$\sqrt Le^{-t\sqrt L}f=-\frac{\partial}{\partial t}e^{-t\sqrt L}f=
\frac{1}{2\sqrt \pi}\int_0^\fz \frac{te^{-u}L}{u^{3/2}}e^{-\frac{t^2}{4u}L}f\,du.$$
Then by the Minkowski inequality and Lemma \ref{l2.1}, we obtain
\begin{eqnarray*}
\|(t\sqrt L)^{2K+1}e^{-t\sqrt L}f\|_{L^2(F)}&&\ls \int_0^\fz
\frac{e^{-u}}{\sqrt u}\lf\|\bigg(\frac{t^2}{4u}L\bigg)^{K+1}
e^{-\frac{t^2}{4u}L} f\r\|_{L^2(F)}u^K\,du\\
&&\ls\int_0^\fz \frac{e^{-u}}{\sqrt u} \exp
\lf\{-\frac{u\dist(E,F)^2}{C_3t^2}\r\}u^K\,du \|f\|_{L^2(E)}\\
&&\ls\lf(\frac{t}{\dist(E,F)}\r)^{2K+1}\|f\|_{L^2(E)}.
\end{eqnarray*}

To show that $(t\sqrt L)^{2K+1}e^{-t\sqrt L}$ is bounded on $L^2(\cx)$
uniformly in $t$, by the boundedness of $t^2Le^{-t^2L}$ on $L^2(\cx)$
uniformly in $t$, we have
\begin{eqnarray*}
\|(t\sqrt L)^{2K+1}e^{-t\sqrt L}f\|_{L^2(\cx)}&&\ls \int_0^\fz
\frac{e^{-u}}{\sqrt u}\lf\|\bigg(\frac{t^2}{4u}\bigg)^{K+1}e^{-\frac{t^2}{4u}L}
f\r\|_{L^2(\cx)}u^K\,du\\
&&\ls\int_0^\fz \frac{e^{-u}}{\sqrt u} u^K\,du\|f\|_{L^2(\cx)}\ls \|f\|_{L^2(\cx)}.
\end{eqnarray*}
Similarly, we have that $(t\sqrt L)^{2K}e^{-t\sqrt L}$ is bounded on $L^2(\cx)$
uniformly in $t$, which completes the proof of Lemma \ref{l5.1}.
\end{proof}

Similarly to \cite[Lemma 5.1]{jy}, we have the following lemma. We
omit the details. Recall that a nonnegative sublinear operator $T$
means that $T$ is sublinear and $Tf\ge 0$ for all $f$ in the domain
of $T$.
\begin{lem}\label{l5.2}
Let $L$ satisfy Assumptions (A) and (B), $\oz$ satisfy Assumption
(C) and $M> \frac n2 (\frac1{p_\oz}-\frac 12)$. Suppose that $T$ is
a linear (resp. nonnegative sublinear) operator, which maps
$L^2(\cx)$ continuously into weak-$L^2(\cx)$.  If there exists a
positive constant $C$ such that for all $(\oz,M)$-atom $\az$,
\begin{equation}\label{5.6}
\int_{\cx}\oz\lf(T(\lz\az)(x)\r)\,d\mu(x)\le CV(B)\oz
\lf(\frac{|\lz|}{V(B)\ro(V(B))}\r),
\end{equation}
then $T$ extends to a bounded linear (resp. sublinear) operator from
$H_{\oz,L}(\cx)$ to $L(\oz )$; moreover, there exists a positive
constant $\wz C$ such that for all $f\in H_{\oz,L}(\cx)$,
$\|Tf\|_{L(\oz)} \le \wz C \|f\|_{H_{\oz,L}(\cx)}$.
\end{lem}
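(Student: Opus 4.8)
The plan is to prove the estimate first for $f$ in the dense subspace $H_{\oz,L}(\cx)\cap L^2(\cx)$ and then to extend it by density. So let $f\in H_{\oz,L}(\cx)\cap L^2(\cx)$, with $M$ as in the statement. By Proposition \ref{p4.3}, there exist $(\oz,M)$-atoms $\{\az_j\}_{j=1}^\fz$, associated respectively to balls $\{B_j\}_{j=1}^\fz$, and $\{\lz_j\}_{j=1}^\fz\subset\cc$ such that $f=\sum_{j=1}^\fz\lz_j\az_j$ in both $L^2(\cx)$ and $H_{\oz,L}(\cx)$, and $\Lambda(\{\lz_j\az_j\}_j)\ls\|f\|_{H_{\oz,L}(\cx)}$, where $\Lambda$ is as in Proposition \ref{p4.3}.

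The key first step is to pass $T$ inside the series. Since $T$ maps $L^2(\cx)$ continuously into weak-$L^2(\cx)$ and $\sum_{j=1}^N\lz_j\az_j\to f$ in $L^2(\cx)$ as $N\to\fz$, the functions $T(\sum_{j=1}^N\lz_j\az_j)$ converge to $Tf$ in measure; passing to a subsequence, they converge to $Tf$ $\mu$-almost everywhere. Since $T$ is sublinear (or linear), $|T(\sum_{j=1}^N\lz_j\az_j)|\le\sum_{j=1}^N|\lz_j|\,|T\az_j|\le\sum_{j=1}^\fz|\lz_j|\,|T\az_j|$ for every $N$, so letting $N\to\fz$ along that subsequence gives, for $\mu$-almost every $x\in\cx$, the domination $|Tf(x)|\le\sum_{j=1}^\fz|\lz_j|\,|T\az_j(x)|$.

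Next I would run the standard $L(\oz)$ bookkeeping. Put $\lz\equiv\Lambda(\{\lz_j\az_j\}_j)$. Using that $\oz$ is increasing, continuous and subadditive (Assumption (C)), the pointwise domination above, and the hypothesis \eqref{5.6} applied to each $\az_j$ with scalar $\lz_j/\lz$, I obtain
$$\int_\cx\oz\lf(\frac{|Tf(x)|}{\lz}\r)\,d\mu(x)\le\sum_{j=1}^\fz\int_\cx\oz\lf(\frac{|\lz_j|\,|T\az_j(x)|}{\lz}\r)\,d\mu(x)\ls\sum_{j=1}^\fz V(B_j)\oz\lf(\frac{|\lz_j|}{\lz V(B_j)\ro(V(B_j))}\r).$$
By the definition of $\Lambda$ together with the monotone convergence theorem (letting the parameter in that definition decrease to $\lz$), the last sum is at most $1$, so $\int_\cx\oz(|Tf(x)|/\lz)\,d\mu(x)\le C_0$ for some positive constant $C_0$ independent of $f$. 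Since $\oz$ is of strictly lower type $p_\oz$, replacing $\lz$ by $\max\{C_0^{1/p_\oz},1\}\,\lz$ makes this integral at most $1$, whence $\|Tf\|_{L(\oz)}\ls\lz\ls\|f\|_{H_{\oz,L}(\cx)}$.

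To finish, I would remove the restriction $f\in L^2(\cx)$ by density. By Remark \ref{r4.1}(iii) the space $H_{\oz,L}(\cx)\cap L^2(\cx)$ is dense in $H_{\oz,L}(\cx)$, so any $f\in H_{\oz,L}(\cx)$ is an $H_{\oz,L}(\cx)$-limit of a Cauchy sequence $\{f_k\}_k\subset H_{\oz,L}(\cx)\cap L^2(\cx)$. Applying the estimate already proved to $f_k-f_l$ (in the sublinear case, combined with the pointwise bound $|Tf_k-Tf_l|\le|T(f_k-f_l)|$ and the $p_\oz$-subadditivity of $\|\cdot\|_{L(\oz)}$ that mirrors Remark \ref{r4.1}(ii)), one sees that $\{Tf_k\}_k$ is Cauchy in $L(\oz)$; its limit furnishes the unambiguous extension of $T$ to $H_{\oz,L}(\cx)$, and passing to the limit in $\|Tf_k\|_{L(\oz)}\le\wz C\|f_k\|_{H_{\oz,L}(\cx)}$ yields the claim. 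The only genuinely delicate point is the first step, namely justifying the interchange of $T$ with the merely $L^2(\cx)$-convergent atomic series; this is precisely where the $L^2(\cx)\to$ weak-$L^2(\cx)$ hypothesis on $T$ enters, forcing convergence in measure and hence, along a subsequence, the almost-everywhere domination. Everything else is routine $\oz$-arithmetic and a standard completion argument.
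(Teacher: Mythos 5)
Your argument is correct and is essentially the paper's own route: the paper omits the proof by reference to \cite[Lemma 5.1]{jy}, whose argument is exactly this one — atomic decomposition from Proposition \ref{p4.3} converging in both $L^2(\cx)$ and $H_{\oz,L}(\cx)$, the weak-$L^2(\cx)$ hypothesis to get convergence in measure and hence an a.e.\ pointwise domination $|Tf|\le\sum_j|\lz_j||T\az_j|$ along a subsequence, the subadditivity/lower-type arithmetic for $\oz$ against $\Lambda(\{\lz_j\az_j\}_j)$, and a density-completion step. No gaps of substance.
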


\begin{thm}\label{t5.2}
 Let $L$ satisfy Assumptions (A) and (B) and $\oz$ satisfy Assumption (C).
 Then the spaces $H_{\oz,L}(\cx)$ and $H_{\oz,\cs_P}(\cx)$
 coincide with equivalent norms.
\end{thm}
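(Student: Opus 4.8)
The plan is to prove the two-sided norm equivalence $\|\cs_L(f)\|_{L(\oz)}\sim\|\cs_P(f)\|_{L(\oz)}$ on the common dense subspace $\overline{\car(L)}\cap L^2(\cx)$, and then pass to completions, using — exactly as in Remark \ref{r4.1}(iii) — that $\wz H_{\oz,L}(\cx)$ and $\wz H_{\oz,\cs_P}(\cx)$ are dense in $H_{\oz,L}(\cx)$ and $H_{\oz,\cs_P}(\cx)$, respectively. Throughout I would fix $M>\frac n2(\frac1{p_\oz}-\frac12)$, and I would use freely that $\cs_P$ is bounded on $L^2(\cx)$ (which follows from the spectral theorem, exactly as for $\cs_L$ via \eqref{2.8}).

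For the inclusion $H_{\oz,L}(\cx)\hookrightarrow H_{\oz,\cs_P}(\cx)$, the crucial step is an atom estimate for $\cs_P$: for every $(\oz,M)$-atom $\az$ associated with a ball $B$ and every $\lz\in\cc$,
\begin{equation*}
\int_{\cx}\oz\big(\cs_P(\lz\az)(x)\big)\,d\mu(x)\ls
V(B)\,\oz\Big(\frac{|\lz|}{V(B)\ro(V(B))}\Big).
\end{equation*}
To prove this I would write $\az=L^Mb$ with $\|(r_B^2L)^kb\|_{L^2(\cx)}\le r_B^{2M}[V(B)]^{-1/2}[\ro(V(B))]^{-1}$, split $\int_\cx=\sum_{k\ge0}\int_{U_k(B)}$, apply the Jensen and H\"older inequalities (as $\oz$ is concave) exactly as in the proof of Proposition \ref{p4.1}, and reduce to bounding $\|\cs_P(\az)\|_{L^2(U_k(B))}$. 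For $k\le3$ this is controlled by $\|\az\|_{L^2(\cx)}$ via the $L^2(\cx)$-boundedness of $\cs_P$. For $k\ge3$ I would split $\car(U_k(B))$ into the region where $t\gtrsim 2^kr_B$ and the parts lying over the $U_i(B)$ with $\dist(B,U_i(B))\gtrsim 2^kr_B$, and on each apply the off-diagonal bound of Lemma \ref{l5.1} with $K=M$ to $t\sqrt Le^{-t\sqrt L}\az=t^{-2M}(t\sqrt L)^{2M+1}e^{-t\sqrt L}b$, obtaining (up to volume factors) $\|\cs_P(\az)\|_{L^2(U_k(B))}\ls 2^{-k(2M+1)}[V(B)]^{-1/2}[\ro(V(B))]^{-1}$; the condition $2M>n(\frac1{p_\oz}-\frac12)$ then forces the resulting sums over $k$ to converge against $\oz$, just as in Proposition \ref{p4.1}. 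Granting this estimate, Lemma \ref{l5.2} applied to the nonnegative sublinear, $L^2(\cx)$-bounded operator $\cs_P$ yields that $\cs_P$ maps $H_{\oz,L}(\cx)$ boundedly into $L(\oz)$; in particular $\wz H_{\oz,L}(\cx)\subseteq\wz H_{\oz,\cs_P}(\cx)$ with $\|f\|_{H_{\oz,\cs_P}(\cx)}\ls\|f\|_{H_{\oz,L}(\cx)}$.

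For the reverse inclusion I would recycle the operator $\pi_{\Psi,L}$ of Section \ref{s4}, now fed with the Poisson square function. Let $\Phi$ be as in Lemma \ref{l2.2}, $\Psi(t)=t^{2(M+1)}\Phi(t)$ and $\pi_{\Psi,L}$ as in \eqref{4.6}; by Proposition \ref{p4.2} it maps $T_2^2(\cx)$ into $L^2(\cx)$ and $T_\oz(\cx)$ into $H_{\oz,L}(\cx)$, boundedly. Since $\Psi(t)te^{-t}=t^{2M+3}\Phi(t)e^{-t}$ is nonnegative with $\int_0^\fz\Psi(t)te^{-t}\frac{\,dt}{t}\in(0,\fz)$, the spectral theorem gives, for every $f\in\overline{\car(L)}\cap L^2(\cx)$,
\begin{equation*}
\pi_{\Psi,L}\big(t\sqrt Le^{-t\sqrt L}f\big)
=C_\Psi\int_0^\fz\Psi(t\sqrt L)\big(t\sqrt Le^{-t\sqrt L}f\big)\frac{\,dt}{t}
=c_0\,f
\end{equation*}
for a fixed nonzero constant $c_0$, the identity holding in $L^2(\cx)$ (as in the proof of Proposition \ref{p4.3}: first check it when $t\sqrt Le^{-t\sqrt L}f$ has bounded support, then approximate). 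If moreover $f\in\wz H_{\oz,\cs_P}(\cx)$, then by Definition \ref{d5.3} and the $L^2(\cx)$-boundedness of $\cs_P$ the function $F(\cdot,t)\equiv t\sqrt Le^{-t\sqrt L}f$ lies in $T_2^2(\cx)\cap T_\oz(\cx)$ with $\|F\|_{T_\oz(\cx)}=\|\cs_P(f)\|_{L(\oz)}$; hence, by Proposition \ref{p4.2}(ii) — and recalling from its proof that $\pi_{\Psi,L}(F)\in\overline{\car(L)}$ and $\cs_L(\pi_{\Psi,L}(F))\in L(\oz)$ — we get $f=c_0^{-1}\pi_{\Psi,L}(F)\in\wz H_{\oz,L}(\cx)$ and $\|\cs_L(f)\|_{L(\oz)}=\|f\|_{H_{\oz,L}(\cx)}\ls\|F\|_{T_\oz(\cx)}=\|\cs_P(f)\|_{L(\oz)}$. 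Combining the two inclusions shows that $\wz H_{\oz,L}(\cx)$ and $\wz H_{\oz,\cs_P}(\cx)$ coincide with equivalent norms, and taking completions finishes the proof.

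The hard part will be the atom estimate for $\cs_P$: unlike the heat semigroup used in Proposition \ref{p4.1}, the Poisson semigroup enjoys only polynomial off-diagonal decay (Lemma \ref{l5.1}), so one must carry the $2M+1$ powers of $t/\dist$ through the splitting of $\car(U_k(B))$ carefully and verify that the lower type $p_\oz$, together with $M>\frac n2(\frac1{p_\oz}-\frac12)$, still makes the geometric sums over the annuli $U_k(B)$ converge against the concave function $\oz$; the remaining ingredients — the spectral-theorem reproducing formula and the essentially verbatim re-use of Proposition \ref{p4.2} with $t\sqrt Le^{-t\sqrt L}$ in place of $t^2Le^{-t^2L}$ — are bookkeeping.
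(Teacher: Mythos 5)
Your proposal is correct and follows essentially the same route as the paper: the forward inclusion via the atom estimate for $\cs_P$ (Jensen/H\"older on annuli, $L^2$-boundedness for small $k$, and the polynomial off-diagonal decay of Lemma \ref{l5.1} applied to $(t\sqrt L)^{2M+1}e^{-t\sqrt L}b$ for $k\ge 3$, concluded by Lemma \ref{l5.2}), and the reverse inclusion via the reproducing formula $f=\frac{\wz C_\Psi}{C_\Psi}\pi_{\Psi,L}(t\sqrt Le^{-t\sqrt L}f)$ together with Proposition \ref{p4.2}(ii) and a density argument. The only quibble is your claimed decay $2^{-k(2M+1)}$ for $\|\cs_P(\az)\|_{L^2(U_k(B))}$: the large-$t$ piece of the cone only yields $2^{-2kM}$, which is what the paper obtains and is all that the condition $2Mp_\oz>n(1-p_\oz/2)$ requires.
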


\begin{proof} Let us first prove that $H_{\oz,L}(\cx)\subset H_{\oz,\cs_P}(\cx)$.
By \eqref{2.8}, we have that the operator $\cs_P$ is bounded on
$L^2(\rn)$. Thus, by Lemma \ref{l5.2}, to show that
$H_{\oz,L}(\cx)\subset H_{\oz,\cs_P}(\cx)$, we only need to show
that \eqref{5.6} holds with $T=\cs_P$, where $M\in\cn$ and $M> \frac
n2 (\frac1{p_\oz}-\frac 12)$. Indeed, it is enough to show that for
all $f\in H_{\oz,L}(\cx)\cap L^2(\cx)$, $\|\cs_P (f)\|_{L(\oz)}\ls
\|f\|_{H_{\oz,L}(\cx)},$ which implies that $(H_{\oz,L}(\cx)\cap
L^2(\cx))\subset H_{\oz,\cs_P}(\cx)$. Then by the completeness of
$H_{\oz,L}(\cx)$ and $H_{\oz,\cs_P}(\cx)$, we see that
$H_{\oz,L}(\cx)\subset H_{\oz,\cs_P}(\cx)$.

Suppose that $\lz\in\cc$ and $\az$ is an $(\oz,M)$-atom supported in
$B\equiv B(x_B,r_B)$. Since $\oz$ is concave, by the Jensen
inequality and the H\"older inequality, we have
\begin{eqnarray*}
\int_\cx \oz(\cs_P(\lz\az)(x))\,d\mu(x)&&=\sum_{k=0}^\fz
\int_{U_k(B)} \oz(|\lz|\cs_P(\az)(x))\,d\mu(x)\\
&&\le \sum_{k=0}^\fz V(2^kB)\oz\bigg(\frac{|\lz|\int_{U_k(B)}
\cs_P(\az)(x)\,d\mu(x)}{V(2^kB)}\bigg)\\
&&\le\sum_{k=0}^\fz
V(2^kB)\oz\bigg(\frac{|\lz|\|\cs_P(\az)\|_{L^2(U_k(B))}}
{[V(2^kB)]^{1/2}}\bigg).
\end{eqnarray*}

Since $\cs_P$ is bounded on $L^2(\cx)$, for $k=0,\,1,\,2,$ we have
$$\|\cs_L(\az)\|_{L^2(U_k(B))}\ls \|\az\|_{L^2(\cx)}\ls
[V(B)]^{-1/2}[\ro(V(B))]^{-1}.$$

For $k\ge 3$, write
\begin{eqnarray*}
\|\cs_P(\az)\|_{L^2(U_k(B))}^2&&=\int_{U_k(B)}\int_0^{\frac{d(x,x_{B})}{4}}
\int_{d(x,y)<t}
|t\sqrt Le^{-t\sqrt L}\az(y)|^2\frac{\,d\mu(y)}{V(x,t)}\frac{\,dt}{t}\,d\mu(x)\\
&&\hs+\int_{U_k(B)}\int_{\frac{d(x,x_{B})}{4}}^\fz\int_{d(x,y)<t}
\cdots \equiv \mathrm{I}_k+\mathrm{J}_k.
\end{eqnarray*}

Since $\az$ is an $(\oz,M)$-atom, by Definition \ref{d4.2}, we have
$\az=L^Mb$ with $b$ as in Definition \ref{d4.2}. To estimate
$\mathrm{I}_k$, let $F_k({B}) \equiv \{y\in\cx: \
d(x,y)<d(x,x_{B})/4 \ \mathrm {for \ some }\ x\in U_k({B})\}$. Then
we have
$d(y,z)\ge d(x,x_B)-d(z,x_B)-d(y,x)\ge \frac 34 d(x,x_B)-r_B\ge 2^{k-2}r_B.$
By Lemma \ref{l5.1}, we have
\begin{eqnarray*}
\mathrm{I}_k&&\ls\int_0^{2^{k-2}r_{B}}\int_{F_k({B})} |(t\sqrt
L)^{2M+1}e^{-t\sqrt L}b(y)|^2\,d\mu(y)
\frac{\,dt}{t^{4M+1}}\\
&&\ls \|b\|_{L^2(\cx)}^2\int_0^{2^{k-2}r_{B}}
\lf(\frac{t}{\dist(F_k({B}),{B})}\r)^{4M+2} \frac{\,dt}{t^{4M+1}}
\ls 2^{-4kM}[V({B})]^{-1}[\ro(V({B}))]^{-2}.
\end{eqnarray*}

For the term $\mathrm{J}_k$, notice that if $x\in U_k({B})$, then we have
$d(x,x_{B})\ge 2^{k-1}r_{B}$, which together with Lemma \ref{l5.1} yields
that
\begin{eqnarray*}
\mathrm{J}_k&&\ls \int_{2^{k-3}r_{B}}^\fz
\int_\cx|(t\sqrt L)^{2M+1}e^{-t\sqrt L}b(y)|^2\,d\mu(y)
\frac{\,dt}{t^{4M+1}}\\
&&\ls\int_{2^{k-3}r_{B}}^\fz\|b\|_{L^2(\cx)}^2\frac{\,dt}{t^{4M+1}}
\ls 2^{-4kM}[V({B})]^{-1}[\ro(V({B}))]^{-2}.
\end{eqnarray*}

Using the estimates of $\mathrm{I}_k$ and $\mathrm{J}_k$ together
with the strictly lower type $p_\oz$ of $\oz$, we obtain
\begin{eqnarray*}
V(2^kB)\oz\bigg(\frac{|\lz|\|\cs_P(\az)\|_{L^2(U_k({B}))}}
{[V(2^kB)]^{1/2}}\bigg)&&\ls 2^{-2kMp_\oz}V(2^kB)
\lf(\frac{V({B})}{V(2^kB)}\r)^{p_\oz/2}
\oz\bigg(\frac{|\lz|}{V({B})\ro(V({B}))}\bigg)\\
&&\ls 2^{-k[2Mp_\oz-n(1-p_\oz/2)]}V({B})\oz\bigg(\frac{|\lz|}
{V({B})\ro(V({B}))}\bigg),
\end{eqnarray*}
where $2Mp_\oz>n(1-p_\oz/2)$. Thus, we finally obtain that
$$\int_\cx \oz(\cs_P(\lz\az)(x))\,d\mu(x)\ls V(B)\oz\bigg(\frac{|\lz|}
{V({B})\ro(V({B}))}\bigg),$$
that is, \eqref{5.6} holds. This finishes the proof of the
inclusion of $H_{\oz,L}(\cx)$ into $H_{\oz,\cs_P}(\cx)$.

Conversely, for any $f\in H_{\oz,\cs_P}(\cx)\cap L^2(\cx)$, we have
$t\sqrt Le^{-t\sqrt L}f\in T_{\oz}(\cx)$, which together with
Proposition \ref{p4.2}(ii) implies that $\pi_{\Psi,L}(t\sqrt
Le^{-t\sqrt L}f)\in H_{\oz,L}(\cx)$.

On the other hand, by $H_\fz$-functional calculus, we have
$f=\frac{\wz C_\Psi}{C_\Psi}\pi_{\Psi,L}(t\sqrt Le^{-t\sqrt L}f)$
in $L^2(\cx)$, where $\wz C_\Psi$ is the positive constant such that
$\wz C_\Psi\int_0^\fz \Psi(t)te^{-t}\frac{\,dt}{t}=1$ and $C_\Psi$
is as in \eqref{4.7}. This, combined with the fact
$\pi_{\Psi,L}(t\sqrt Le^{-t\sqrt L}f)\in H_{\oz,L}(\cx)$, implies
that $f\in H_{\oz,L}(\cx)$. Via a density argument, we further
obtain $H_{\oz,\cs_P}(\cx)\subset H_{\oz,L}(\cx)$, which completes
the proof of Theorem \ref{t5.2}.
\end{proof}

\begin{rem}\label{r5.1}\rm
(i) Since the atoms are associated with $L$, they do not have
vanishing integral in general. Hofmann et al \cite{hlmmy} introduced
the so-called the conservation property of the semigroup, namely,
$e^{-tL}1=1$ in $L^2_{\loc}(\cx)$, and showed that under this
assumption and Assumptions (A) and (B), then for each $(1,M)$-atom
$\az$, $\int_\cx \az(x)\,d\mu(x)=0$. From this and Proposition
\ref{p4.3}, we immediately deduce that if $L$ satisfies Assumptions
(A) and (B) and has the conservation property, and $\oz$ satisfies
Assumption (C) with $p_\oz\in (n/(n+1), 1]$, then
$H_{\oz,L}(\cx)\subset H_\oz(\cx)$, where $\cx$ is an Ahlfors
$n$-regular space (see \cite{v}). In particular,
$H_{L}^p(\cx)\subset H^p(\cx)$ for all $p\in (n/(n+1),1]$.

(ii) Let $s\in\zz_+$. The semigroup $\{e^{-tL}\}_{t>0}$ is said to
have the $s$-generalized conservation property, if for all $\gz\in \zz_+^n$
with $|\gz|\le s$,
\begin{eqnarray}\label{5.7}
e^{-tL}((\cdot)^\gz)(x)=x^{\gz}\quad \mathrm{in}\quad L^2_{\loc}(\rn),
\end{eqnarray}
namely, for every $\phi\in L^2(\rn)$
with bounded support,
\begin{eqnarray}\label{5.8}
  \int_\rn x^{\gz}e^{-tL}\phi(x)\,dx\equiv
  \int_\rn e^{-tL}((\cdot)^\gz)(x)\phi(x)\,dx=
  \int_\rn x^{\gz}\phi(x)\,dx,
\end{eqnarray}
where $x^\gz=x_1^{\gz_1}\cdots x_n^{\gz_n}$
for $x=(x_1,\cdots,x_n)\in\rn$ and $\gz=(\gz_1,\cdots,\gz_n)\in
\zz_+^n$.

Notice that for any $\phi$ with bounded support and $\gz\in
\zz_+^n$, by the Davies-Gaffney estimate, one can easily check that
$x^{\gz}e^{-tL}\phi(x),\, x^{\gz}(I+L)^{-1}\phi(x)\in L^1(\rn)$.
Hence, by \eqref{5.8} and the $L^2(\rn)$-functional calculus, we
obtain
\begin{eqnarray}\label{5.9}
\int_\rn x^{\gz}(I+L)^{-1}\phi(x)\,dx=
\int_0^\fz e^{-t}\lf[\int_\rn x^{\gz} e^{-tL}\phi(x)\,dx\r]\,dt=
\int_\rn x^{\gz}\phi(x)\,dx.
\end{eqnarray}

Let $\az$ be an $(\oz,M)$-atom and $s\equiv \lfr n(\frac 1 {p_\oz}-1) \rf$.
By Definition \ref{d4.2}, there exists
$b\in \cd(L^M)$ such that $\az=L^Mb$. Thus, if $L$ satisfies \eqref{5.7},
then for all $\gz\in \zz_+^n$
and $|\gz|\le s$, by \eqref{5.8} and \eqref{5.9}, we obtain
\begin{eqnarray*}
&&\int_\rn x^{\gz}\az(x)\,d\mu(x)\\
&&\hs=\int_\rn x^{\gz} (I+L)^{-1}\az(x)\,d\mu(x)\\
&&\hs=\int_\rn x^{\gz} (I+L)^{-1}(I+L) L^{M-1}b(x)\,d\mu(x)
-\int_\rn x^{\gz} (I+L)^{-1} L^{M-1}b(x)\,d\mu(x)\\
&&\hs=\int_\rn x^{\gz}  L^{M-1}b(x)\,d\mu(x)
-\int_\rn x^{\gz} (I+L)^{-1} L^{M-1}b(x)\,d\mu(x)=0,
\end{eqnarray*}
which implies that $\az$ is a classical $H_\oz(\rn)$-atom; for the definition
of $H_\oz(\rn)$-atoms, see \cite{v}.

Thus, if $L$ satisfies \eqref{5.7} and Assumptions (A) and (B), and
$\oz$ satisfies Assumption (C), then by Proposition \ref{p4.3}, we
know that $H_{\oz,L}(\rn)\subset H_\oz(\rn)$. In particular,
$H_{L}^p(\rn)\subset H^p(\rn)$ for all $p\in (0,1]$.
\end{rem}

\section{ Applications to Schr\"odinger operators \label{s6}}

\hskip\parindent In this section, let $\cx\equiv \rn$ and $L\equiv
-\Delta +V$ be a Schr\"odinger operator, where $V\in
L^1_{\loc}(\rn)$ is a nonnegative potential. We establish several
characterizations of the corresponding Orlicz-Hardy spaces
$H_{\oz,L}(\rn)$ by beginning with some notions.

Since $V$ is a  nonnegative function, by the Feynman-Kac formula,
we obtain that $h_t$, the kernel of the semigroup $e^{-tL}$,
satisfies that for all $x,\,y\in\rn$ and $t\in(0,\fz)$,
\begin{equation}\label{6.1}
  0\le h_t(x,y)\le (4\pi t)^{-n/2}\exp\lf(-\frac{|x-y|^2}{4t}\r).
\end{equation}
It is easy to see that $L$ satisfies Assumptions (A) and (B).

From Theorems \ref{t5.1} and \ref{t5.2}, we deduce the following
conclusions on Hardy spaces associated with $L$.

\begin{thm}\label{t6.1}
Let $\oz$ be as in Assumption (C), $M> \frac n2(\frac 1{p_\oz}-\frac
12)$ and $\ez>n(1/p_\oz-1/p_\oz^+)$. Then the spaces
$H_{\oz,L}(\rn)$, $H_{\oz,{\rm at}}^{M}(\rn)$, $H_{\oz,{\rm
mol}}^{M,\ez}(\rn)$ and $H_{\oz,\cs_P}(\rn)$ coincide with
equivalent norms.
\end{thm}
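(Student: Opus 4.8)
The plan is to reduce Theorem \ref{t6.1} to the abstract results of Section 5 by checking that the present setting falls within their hypotheses. Note first that $\cx=\rn$, equipped with the Euclidean metric and the Lebesgue measure, is a metric space with doubling measure: \eqref{2.1} holds with $C_1=2^n$, and, as remarked after \eqref{2.4}, the exponent $N$ there may be taken to be $0$. Thus, once we know that $L=-\Delta+V$ satisfies Assumptions (A) and (B), Theorems \ref{t5.1} and \ref{t5.2} apply and yield the conclusion.

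First I would check Assumption (A). Since $0\le V\in L^1_\loc(\rn)$, the nonnegative sesquilinear form
$$Q(f,g)\equiv\int_\rn\nz f(x)\cdot\overline{\nz g(x)}\,dx+\int_\rn V(x)f(x)\overline{g(x)}\,dx,$$
with form domain $\{f\in W^{1,2}(\rn):\ \int_\rn V|f|^2\,dx<\fz\}$, is densely defined and closed; the self-adjoint operator it generates is exactly $L=-\Delta+V$, which is therefore a nonnegative self-adjoint operator on $L^2(\rn)$ (this is classical; see, e.g., \cite{si}).

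Next I would check Assumption (B). Being generated by a nonnegative self-adjoint operator, $\{e^{-tL}\}_{t>0}$ is a bounded analytic semigroup on $L^2(\rn)$ by the spectral theorem. For the Davies-Gaffney estimate \eqref{2.6}, I would start from the Feynman-Kac bound \eqref{6.1}: given closed sets $E,\,F\subset\rn$, $t\in(0,\fz)$ and $f\in L^2(E)$, since $|x-y|\ge\dist(E,F)$ whenever $x\in F$ and $y\in E$, the elementary bound $e^{-|x-y|^2/(4t)}\le e^{-\dist(E,F)^2/(8t)}e^{-|x-y|^2/(8t)}$ applied in \eqref{6.1} gives
$$h_t(x,y)\le(4\pi t)^{-n/2}\exp\lf\{-\frac{\dist(E,F)^2}{8t}\r\}\exp\lf\{-\frac{|x-y|^2}{8t}\r\};$$
then Young's convolution inequality, together with the identity $\int_\rn(4\pi t)^{-n/2}e^{-|z|^2/(8t)}\,dz=2^{n/2}$, yields \eqref{2.6} with $C_2=2^{n/2}$ and $C_3=8$. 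Hence $L$ satisfies Assumptions (A) and (B).

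It then remains only to invoke the results already established. Since $\oz$ satisfies Assumption (C), $M>\frac n2(\frac1{p_\oz}-\frac12)$ and $\ez>n(1/p_\oz-1/p_\oz^+)$, Theorem \ref{t5.1} shows that $H_{\oz,L}(\rn)$, $H_{\oz,{\rm at}}^{M}(\rn)$ and $H_{\oz,{\rm mol}}^{M,\ez}(\rn)$ coincide with equivalent norms, while Theorem \ref{t5.2} shows that $H_{\oz,L}(\rn)$ and $H_{\oz,\cs_P}(\rn)$ coincide with equivalent norms; combining the two statements gives Theorem \ref{t6.1}. Every step is either a standard fact about Schr\"odinger forms or a routine consequence of \eqref{6.1}, so I do not expect a genuine obstacle; the only point requiring (mild) care is extracting the Davies-Gaffney decay from the pointwise Gaussian bound, which is handled by the elementary splitting of the exponential displayed above.
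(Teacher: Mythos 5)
Your proposal is correct and follows the paper's own route: the paper likewise deduces Theorem \ref{t6.1} directly from Theorems \ref{t5.1} and \ref{t5.2} after noting, via the Feynman--Kac bound \eqref{6.1}, that $L=-\Delta+V$ satisfies Assumptions (A) and (B). The only difference is that you spell out the verification of Assumptions (A) and (B) (form definition, analyticity, and the Gaussian-to-Davies--Gaffney splitting), which the paper leaves as ``easy to see''; these details are accurate.
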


Let us now establish the boundedness of the Riesz transform $\nz L^{-1/2}$
on $H_{\oz,L}(\rn)$. We first recall a lemma established in \cite{hlmmy}.

\begin{lem}\label{l6.1}
There exist two positive constants $C$ and $c$ such that for
all closed sets $E$ and $F$ in $\rn$ and $f\in L^2(E)$,
$$\|t\nz e^{-t^2L}f\|_{L^2(F)}\le C\exp\lf\{-\frac{\dist(E,F)^2}
{ct^2}\r\}\|f\|_{L^2(E)}.$$
\end{lem}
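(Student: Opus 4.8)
The plan is to deduce this gradient Davies--Gaffney bound from the two $L^2(\cx)$ Davies--Gaffney estimates already at hand --- the one for $\{e^{-t^2L}\}_{t>0}$ from \eqref{2.6} and the one for $\{(t^2L)e^{-t^2L}\}_{t>0}$ from Lemma \ref{l2.1} --- by means of a Caccioppoli-type energy inequality, in which the nonnegativity of $V$ enters decisively. First I would record the $L^2(\rn)$-boundedness: for $v\in\cd(L)$ one has $\int_\rn|\nz v|^2\,dx\le\int_\rn(|\nz v|^2+V|v|^2)\,dx=\la Lv,v\ra$, so with $v=e^{-t^2L}f$ the spectral theorem gives $\|t\nz e^{-t^2L}f\|_{L^2(\rn)}^2\le t^2\la Le^{-2t^2L}f,f\ra\le(2e)^{-1}\|f\|_{L^2(\rn)}^2$. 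This settles the case $\dist(E,F)=0$ and will also be invoked at the end.

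Now assume $d\equiv\dist(E,F)>0$ and first treat $0<t\le d$; write $u\equiv e^{-t^2L}f$. Choose the Lipschitz function $\phi(x)\equiv\max\{0,\,1-\tfrac2d\dist(x,F)\}$, so that $\phi\equiv1$ on $F$, $0\le\phi\le1$, $|\nz\phi|\le2/d$ a.e., and $\supp\phi\subset S\equiv\{x\in\rn:\dist(x,F)<d/2\}$. Using $\phi^2u$ as a test function in the Dirichlet form $\ce$ associated with $L$ --- which is legitimate since $u\in\cd(L)$ forces $\nz u,\,V^{1/2}u\in L^2(\rn)$ and $\phi$ is bounded and Lipschitz, so $\phi^2u$ lies in the form domain --- taking real parts, invoking $V\ge0$ and the elementary bound $2\phi|\nz\phi|\,|\nz u|\,|u|\le\frac12\phi^2|\nz u|^2+2|\nz\phi|^2|u|^2$, one obtains
\begin{equation*}
\int_F|\nz u|^2\,dx\le\int_\rn\phi^2|\nz u|^2\,dx\le2|\la Lu,\phi^2u\ra|+4\int_\rn|\nz\phi|^2|u|^2\,dx.
\end{equation*}

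It remains to estimate the two terms on the right. Since $|\nz\phi|^2\le4d^{-2}\chi_S$ and $\dist(S,E)\ge d/2$, the Davies--Gaffney estimate \eqref{2.6} yields $\int_\rn|\nz\phi|^2|u|^2\,dx\ls d^{-2}\|e^{-t^2L}f\|_{L^2(S)}^2\ls d^{-2}\exp\lf\{-cd^2/t^2\r\}\|f\|_{L^2(E)}^2$ for some $c>0$. For the first term, the Cauchy--Schwarz inequality gives $|\la Lu,\phi^2u\ra|\le\|Lu\|_{L^2(S)}\|u\|_{L^2(S)}$; since $\{(t^2L)e^{-t^2L}\}_{t>0}$ satisfies the Davies--Gaffney estimate by Lemma \ref{l2.1}, $\|Lu\|_{L^2(S)}=t^{-2}\|(t^2L)e^{-t^2L}f\|_{L^2(S)}\ls t^{-2}\exp\lf\{-cd^2/t^2\r\}\|f\|_{L^2(E)}$, and combining this with the bound on $\|u\|_{L^2(S)}=\|e^{-t^2L}f\|_{L^2(S)}$ already used we get $|\la Lu,\phi^2u\ra|\ls t^{-2}\exp\lf\{-cd^2/t^2\r\}\|f\|_{L^2(E)}^2$ after relabelling $c$. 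Plugging these into the displayed inequality and multiplying by $t^2$,
\begin{equation*}
\|t\nz e^{-t^2L}f\|_{L^2(F)}^2\ls\lf(1+\frac{t^2}{d^2}\r)\exp\lf\{-\frac{cd^2}{t^2}\r\}\|f\|_{L^2(E)}^2\ls\exp\lf\{-\frac{cd^2}{t^2}\r\}\|f\|_{L^2(E)}^2,
\end{equation*}
using $t\le d$. Finally, for $t>d$ the desired inequality follows from the $L^2(\rn)$-boundedness proved in the first paragraph together with $\exp\{-cd^2/t^2\}\ge e^{-c}$; taking square roots completes the proof. The only genuinely delicate step is the energy inequality: one must see that the ``bad'' term $\la Lu,\phi^2u\ra$ is itself controlled by a Davies--Gaffney estimate --- the one for $(t^2L)e^{-t^2L}$ rather than for $e^{-t^2L}$ --- while the subsequent absorption of the polynomial factor $1+t^2/d^2$ into the Gaussian is routine.
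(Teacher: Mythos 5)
Your argument is correct. Note, however, that the paper does not prove Lemma \ref{l6.1} at all: it is simply recalled from \cite{hlmmy}, so there is no in-paper proof to compare with. What you supply is a self-contained derivation from the two ingredients the paper does have, namely the Davies--Gaffney estimate \eqref{2.6} for $e^{-t^2L}$ and its analogue for $(t^2L)e^{-t^2L}$ from Lemma \ref{l2.1}, via a Caccioppoli-type energy inequality for the form of $L=-\Delta+V$; this is essentially the standard route taken in the literature (e.g. the argument behind \cite[Lemma 2.3]{hm1} and the corresponding lemma in \cite{hlmmy}) rather than a genuinely new one. The details check out: the spectral bound $t^2\langle Le^{-2t^2L}f,f\rangle\le(2e)^{-1}\|f\|_{L^2}^2$ gives the global $L^2$ bound and disposes of the cases $\dist(E,F)=0$ and $t>\dist(E,F)$; the cutoff $\phi$ with $\supp\phi\subset S$, $\dist(S,E)\ge d/2$, the absorption of $\tfrac12\int\phi^2|\nabla u|^2$, and the dropping of the nonnegative term $\int V\phi^2|u|^2$ are all legitimate; and the two off-diagonal estimates control the remaining terms, with the factor $1+t^2/d^2$ harmless for $t\le d$. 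The one point that genuinely needs care is the justification of $\langle Lu,\phi^2u\rangle=\ce(u,\phi^2u)$, i.e. that $\phi^2u$ lies in the form domain; since $L$ is the form sum with domain $W^{1,2}(\rn)\cap L^2(V\,dx)$ and $\phi$ is bounded Lipschitz, this is exactly as you assert, so the proof stands as a correct (and arguably useful, since it keeps the paper self-contained) alternative to quoting \cite{hlmmy}.
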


\begin{thm}\label{t6.2}
Let $\oz$ be as in Assumption (C). Then the Riesz transform $\nz L^{-1/2}$
is bounded from $H_{\oz,L}(\rn)$ to $L(\oz)$.
\end{thm}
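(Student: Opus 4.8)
The plan is to use the atomic characterization of $H_{\oz,L}(\rn)$ established in Theorem \ref{t6.1} together with the reduction lemma for sublinear operators, Lemma \ref{l5.2}. Recall that $\nz L^{-1/2}$ is bounded on $L^2(\rn)$ (indeed $\|\nz L^{-1/2}f\|_{L^2(\rn)}^2 = \la \nz L^{-1/2}f, \nz L^{-1/2}f\ra = \la L^{-1/2}f, L L^{-1/2}f\ra = \|f\|_{L^2(\rn)}^2$, using that $\nz^\ast\nz = -\Delta \le L$ together with the self-adjoint functional calculus; in fact the estimate $\|\nz L^{-1/2}f\|_{L^2}\le\|f\|_{L^2}$ follows from $\|\nz e^{-t^2L}f\|_{L^2}\ls t^{-1}\|f\|_{L^2}$ of Lemma \ref{l6.1} and the formula $L^{-1/2}=\frac2{\sqrt\pi}\int_0^\fz e^{-t^2L}\,dt$). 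Since $L(\oz)$ plays here the role that $L^2(\rn)$-type targets play, and since $T\equiv\nz L^{-1/2}$ is linear and $L^2(\rn)$-bounded (hence maps $L^2$ continuously into weak-$L^2$), Lemma \ref{l5.2} reduces everything to verifying the single inequality \eqref{5.6} for $T=\nz L^{-1/2}$, i.e. to showing that for every $(\oz,M)$-atom $\az$ supported in a ball $B\equiv B(x_B,r_B)$ and every $\lz\in\cc$,
$$\int_{\rn}\oz\big(|\lz|\,|\nz L^{-1/2}\az(x)|\big)\,d\mu(x)\ls V(B)\,\oz\lf(\frac{|\lz|}{V(B)\ro(V(B))}\r),$$
where we fix $M\in\cn$ with $M>\frac n2(\frac1{p_\oz}-\frac12)$.

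The key steps, in order, are as follows. First, since $\az=L^Mb$ with $b$ as in Definition \ref{d4.2}, use the subordination-type identity $L^{-1/2}=\frac{1}{\sqrt\pi}\int_0^\fz e^{-sL}\,\frac{ds}{\sqrt s}$ (or, after the substitution $s=t^2$, $L^{-1/2}=\frac2{\sqrt\pi}\int_0^\fz e^{-t^2L}\,dt$) to write $\nz L^{-1/2}\az = \frac2{\sqrt\pi}\int_0^\fz \nz e^{-t^2L}\az\,dt$, and split the $t$-integral at $t=r_B$. Second, partition $\rn=\bigcup_{k\ge0}U_k(B)$ as in \eqref{2.5}, and on each annulus $U_k(B)$ estimate $\|\nz L^{-1/2}\az\|_{L^2(U_k(B))}$ using Lemma \ref{l6.1} (for the off-diagonal decay in the regime $t\ls 2^k r_B$, exploiting $\dist(B,U_k(B))\sim 2^kr_B$) and the $L^2$-boundedness of $\nz e^{-t^2L}$ together with the cancellation encoded by $\az=L^Mb$ (for the regime $t\gs 2^kr_B$, where one writes $\nz e^{-t^2L}L^Mb = t^{-2M}\nz e^{-t^2L}(t^2L)^Mb$ to gain the factor $(r_B/t)^{2M}$ from $\|(r_B^2L)^Mb\|_{L^2}\le r_B^{2M}[V(B)]^{-1/2}[\ro(V(B))]^{-1}$). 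This should yield, for some $\bz>0$ depending on $M$ and $n$,
$$\|\nz L^{-1/2}\az\|_{L^2(U_k(B))}\ls 2^{-k\bz}[V(B)]^{-1/2}[\ro(V(B))]^{-1},\qquad k\in\zz_+,$$
provided $M$ is large enough, which our choice $2Mp_\oz>n(1-p_\oz/2)$ guarantees. Third, feed this into the concavity/Jensen argument exactly as in the proof of Theorem \ref{t5.2}: by the Jensen and H\"older inequalities,
$$\int_{\rn}\oz\big(|\lz|\,|\nz L^{-1/2}\az|\big)\,d\mu \le \sum_{k=0}^\fz V(2^kB)\,\oz\lf(\frac{|\lz|\,\|\nz L^{-1/2}\az\|_{L^2(U_k(B))}}{[V(2^kB)]^{1/2}}\r),$$
and then use the strictly-lower-type-$p_\oz$ property of $\oz$, the lower type of $\ro$ and the doubling bound $V(2^kB)\ls 2^{kn}V(B)$ to sum the geometric series, arriving at the desired bound whenever $2Mp_\oz>n(1-p_\oz/2)$.

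The main obstacle I expect is the estimate of $\|\nz L^{-1/2}\az\|_{L^2(U_k(B))}$ on the far annuli $k\ge3$: one must carefully split the $\int_0^\fz\nz e^{-t^2L}\az\,dt$ at an appropriately chosen threshold (say $t\sim 2^kr_B$) so that the Gaussian off-diagonal decay from Lemma \ref{l6.1} controls the small-$t$ piece while the cancellation $\az=L^Mb$ controls the large-$t$ piece, and then combine the two via the Minkowski inequality and the Davies-Gaffney bounds of Lemma \ref{l2.1}. This is a routine but somewhat delicate iteration of the now-standard off-diagonal machinery; everything else (the subordination formula, Jensen, the geometric summation using the type indices of $\oz$ and $\ro$) is mechanical. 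Once \eqref{5.6} is verified, Lemma \ref{l5.2} immediately gives that $\nz L^{-1/2}$ extends to a bounded operator from $H_{\oz,L}(\rn)$ to $L(\oz)$, completing the proof of Theorem \ref{t6.2}.
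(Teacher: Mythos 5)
Your overall architecture coincides with the paper's: reduce, via the $L^2(\rn)$-boundedness of $\nz L^{-1/2}$ and Lemma \ref{l5.2}, to verifying \eqref{5.6} for a single $(\oz,M)$-atom $\az=L^Mb$ with $M>\frac n2(\frac1{p_\oz}-\frac12)$, then decompose $\rn$ into the annuli $U_k(B)$, apply Jensen and H\"older on each annulus, and sum using the lower type $p_\oz$ of $\oz$. The gap is in your key estimate on the far annuli. You propose to split $\int_0^\fz\nz e^{-t^2L}\az\,dt$ at $t\sim 2^kr_B$, controlling the small-$t$ piece by the Gaussian off-diagonal decay of Lemma \ref{l6.1} applied to $\az$ alone and reserving the cancellation $\az=L^Mb$ for the large-$t$ piece. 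The small-$t$ piece then carries no decay in $k$: with $\dist(B,U_k(B))\sim 2^kr_B$, the substitution $s=2^kr_B/t$ gives
$$\int_0^{2^{k-2}r_B}\frac1t\exp\left\{-\frac{(2^kr_B)^2}{ct^2}\right\}dt=\int_{c'}^{\fz}e^{-s^2/c}\,\frac{ds}{s}\sim 1,$$
so that piece is only bounded by $C\|\az\|_{L^2(B)}\ls[V(B)]^{-1/2}[\ro(V(B))]^{-1}$, i.e. your exponent $\bz$ is $0$ there, and the resulting series $\sum_k 2^{kn}2^{-knp_\oz/2}$ is not summable for any $p_\oz\le 1$. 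The decay must come from using the cancellation and the off-diagonal decay \emph{simultaneously}, over the whole range of $t$: write $\nz e^{-t^2L}\az=t^{-2M-1}\,t\nz(t^2L)^Me^{-t^2L}b$ and use the Davies-Gaffney estimate for the composite family $t\nz(t^2L)^Me^{-t^2L}$ (this is exactly what the paper invokes, citing \cite[Lemma 2.3]{hm1}; it can also be obtained by factoring $e^{-t^2L}=e^{-\frac{t^2}2L}e^{-\frac{t^2}2L}$ and combining Lemma \ref{l6.1} with Lemma \ref{l2.1}), which yields $\int_0^\fz t^{-2M-1}\exp\{-\dist(B,U_k(B))^2/(ct^2)\}\,dt\ls(2^kr_B)^{-2M}$ and hence the decay $2^{-2kM}$ that makes your summation work under $2Mp_\oz>n(1-p_\oz/2)$.

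Two smaller remarks. Your parenthetical claim that the $L^2$-boundedness of $\nz L^{-1/2}$ follows from $\|\nz e^{-t^2L}f\|_{L^2}\ls t^{-1}\|f\|_{L^2}$ and $L^{-1/2}=\frac2{\sqrt\pi}\int_0^\fz e^{-t^2L}\,dt$ is not a valid derivation, since $\int_0^\fz t^{-1}\,dt$ diverges; your quadratic-form argument (or simply citing \cite{hlmmy}, as the paper does) is what should be used. Once the far-annulus estimate is repaired as above, your proof is essentially the paper's: the paper phrases the $j\ge3$ estimate through the convexity of $\oz^{-1}$ and an $L^1$-average over $U_j(B)$, which is equivalent in effect to your $L^2$-on-annuli plus Jensen formulation.
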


\begin{proof}
It was proved in \cite{hlmmy} that the Riesz transform
$\nz L^{-1/2}$ is bounded on $L^2(\rn)$; thus, to prove Theorem
\ref{t6.2}, by Lemma \ref{l5.2}, it suffices to show that
\eqref{5.6} holds.

Suppose that $\lz\in \cc$ and $\az=L^Mb$ is an $(\oz,M)$-atom
supported in $B\equiv B(x_B,r_B)$, where $b$ is as in Definition
\ref{d4.2} and we choose $M\in\cn$ such that $M>\frac n2(\frac
1{p_\oz}-\frac12)$.

For $j=0,1,2$, by the Jensen inequality, the H\"older inequality
and the $L^2(\rn)$-boundedness of $\nz L^{-1/2}$, we obtain
\begin{equation*}
\int_{U_j(B)}\oz(|\lz\nz L^{-1/2}\az(x)|)\,dx\ls
|B|\oz\lf(\frac{\|\lz\nz L^{-1/2}\az\|_{L^2(U_j(B))}}{|B|^{1/2}}\r)
\ls |B|\oz\lf(\frac{|\lz|}{\ro(|B|)|B|}\r).
\end{equation*}

Let us estimate the case $j\ge 3$. By \cite[Lemma 2.3]{hm1}, we see
that the operator $t\nz(t^2L)^M e^{-t^2L}$ also satisfies the
Davies-Gaffney estimate. By this, the fact that $\oz^{-1}$ is
convex, the Jensen inequality, the H\"older inequality and Lemma
\ref{l6.1}, we obtain
\begin{eqnarray*}
&&\oz^{-1}\lf(\frac{1}{|U_j(B)|}\int_{U_j(B)}\oz(|\lz\nz
L^{-1/2}\az(x)|)\,dx\r)\\&&
\hs\ls\frac{1}{|U_j(B)|}\int_{U_j(B)}\oz^{-1}\circ\oz\lf(\lf|\int_0^{\fz}
\lz\nz e^{-t^2L}\az(x)\,dt\r|\r)\,dx\\
&&\hs\ls \frac{1}{|U_j(B)|}\int_0^{\fz}\int_{U_j(B)} |\lz
t\nz(t^2L)^M
e^{-t^2L}b(x)|\,dx\frac{\,dt}{t^{2M+1}}\\
&&\hs\ls \frac{|\lz| \|b\|_{L^2(\rn)}}{{|U_j(B)|^{1/2}}}\int_0^{\fz}
\exp\lf\{-\frac{\dist(B,U_j(B))^2}{ct^2}\r\}\frac{\,dt}{t^{2M+1}}\\
&&\hs\ls \frac{|\lz| \|b\|_{L^2(\rn)}}{{|U_j(B)|^{1/2}}}\int_0^{\fz}
(2^jr_B)^{-2M}\min\lf\{\frac{t}{2^jr_B},\frac{2^jr_B}{t}\r\}\frac{\,dt}{t}
\ls 2^{-j(2M+n/2)}\frac{|\lz|}{\ro(|B|)|B|},
\end{eqnarray*}
where $c$ is a positive constant. Since $\oz$ is of lower type
$p_\oz$, we obtain
\begin{eqnarray*}
\int_{U_j(B)}\oz(|\lz\nz L^{-1/2}\az(x)|)\,dx&&\ls
|U_j(B)|\oz\lf(2^{-j(2M+n/2)}\frac{|\lz|}{\ro(|B|)|B|}\r)\\
&&\ls 2^{-j[2Mp_\oz+n(p_\oz/2-1)]}|B|\oz\lf(\frac{|\lz|}{\ro(|B|)|B|}\r).\nonumber
\end{eqnarray*}

Combining the above estimates and using $M>\frac n2(\frac
1{p_\oz}-\frac12)$, we obtain that \eqref{5.6} holds for $\nz
L^{-1/2}$, which completes the proof of Theorem \ref{t6.2}.
\end{proof}

It was proved in \cite{hlmmy} that the Riesz transform $\nz
L^{-1/2}$ is bounded from $H_L^1(\rn)$ to $H^1(\rn)$. Similarly to
\cite[Theorem 7.4]{jy}, we have the following result. We omit the
details here; see \cite{ja,v,se96} for more details about the
Hardy-Orlicz space $H_{\oz}(\rn)$.

\begin{thm}\label{t6.3}
Let $\oz$ be as in Assumption (C) and $p_\oz\in (\frac{n}{n+1},1]$.
Then the Riesz transform $\nz L^{-1/2}$
is bounded from $H_{\oz,L}(\rn)$ to $H_{\oz}(\rn)$.
\end{thm}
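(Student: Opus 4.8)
The plan is to deduce Theorem \ref{t6.3} from the atomic characterization of $H_{\oz,L}(\rn)$ (Theorem \ref{t6.1}) together with the molecular characterization of the classical Hardy-Orlicz space $H_\oz(\rn)$ from \cite{v}, following the argument of \cite[Theorem 7.4]{jy}. The key simplification afforded by the hypothesis $p_\oz\in(\frac n{n+1},1]$ is that $\lfr n(1/p_\oz-1)\rf=0$, so an $H_\oz(\rn)$-molecule adapted to a ball $B\equiv B(x_B,r_B)$ is required to satisfy only the single cancellation condition $\int_\rn\bz(x)\,dx=0$, together with the size estimate $\|\bz\|_{L^2(U_j(B))}\ls 2^{-j\ez}[V(2^jB)]^{-1/2}[\ro(V(2^jB))]^{-1}$ over the annuli $U_j(B)$, $j\in\zz_+$, for some $\ez$ above an admissible threshold; moreover every such molecule lies in $H_\oz(\rn)$ with norm bounded by a constant, and $\|\sum_j\lz_j\bz_j\|_{H_\oz(\rn)}\ls\Lambda(\{\lz_j\bz_j\}_j)$ whenever the $\bz_j$ are molecules adapted to balls $B_j$. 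Hence it suffices to show that $\nz L^{-1/2}$ maps each $(\oz,M)$-atom, with $M$ a large integer satisfying $M>\frac n2(\frac1{p_\oz}-\frac12)$, to a fixed constant multiple of an $H_\oz(\rn)$-molecule adapted to the same ball, and then to assemble this along the atomic decomposition.

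For the size estimate, let $\az=L^Mb$ be an $(\oz,M)$-atom supported in $B\equiv B(x_B,r_B)$, with $b$ as in Definition \ref{d4.2}. For $j=0,1,2$ the required bound on $U_j(B)$ follows from the $L^2(\rn)$-boundedness of $\nz L^{-1/2}$ (established in \cite{hlmmy}) and the normalization of $\az$. For $j\ge 3$ I would argue exactly as in the proof of Theorem \ref{t6.2}: writing $\nz L^{-1/2}\az=c\int_0^\fz t\nz(t^2L)^Me^{-t^2L}b\,\frac{\,dt}{t^{2M+1}}$, using the Davies-Gaffney estimate for the family $\{t\nz(t^2L)^Me^{-t^2L}\}_{t>0}$ (see \cite[Lemma 2.3]{hm1} and Lemma \ref{l6.1}), splitting the $t$-integral at $t\sim\dist(B,U_j(B))\sim 2^jr_B$, and invoking $\|b\|_{L^2(\rn)}\ls r_B^{2M}[V(B)]^{-1/2}[\ro(V(B))]^{-1}$, one obtains $\|\nz L^{-1/2}\az\|_{L^2(U_j(B))}\ls 2^{-2jM}[V(B)]^{-1/2}[\ro(V(B))]^{-1}$. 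Converting this to the $2^jB$-normalization via \eqref{2.3} and the upper type $1/p_\oz-1$ of $\ro$ yields the molecular decay with exponent $\ez\le 2M-\frac n2-n(1/p_\oz-1)$, which exceeds any prescribed threshold once $M$ is chosen large.

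The cancellation $\int_\rn\nz L^{-1/2}\az(x)\,dx=0$ needs a short extra argument: the same estimates applied to $L^{-1/2}\az=c\int_0^\fz t(t^2L)^Me^{-t^2L}b\,\frac{\,dt}{t^{2M+1}}$ show that $\nz L^{-1/2}\az\in L^1(\rn)$ and that $L^{-1/2}\az$ tends to zero at infinity; since $\nz L^{-1/2}\az$ is the gradient of $L^{-1/2}\az$, a standard truncation argument based on $L^{-1/2}\az\in L^1(\rn)$ forces each of its integrals to vanish. Finally, given $f\in H_{\oz,L}(\rn)\cap L^2(\rn)$, I would take the atomic decomposition $f=\sum_j\lz_j\az_j$ from Proposition \ref{p4.3}, which converges in both $H_{\oz,L}(\rn)$ and $L^2(\rn)$ with $\Lambda(\{\lz_j\az_j\}_j)\ls\|f\|_{H_{\oz,L}(\rn)}$, apply $\nz L^{-1/2}$ termwise by its $L^2$-boundedness to get $\nz L^{-1/2}f=\sum_j\lz_j\nz L^{-1/2}\az_j$ in $L^2(\rn)$, and recognize the right-hand side as a constant multiple of $\sum_j\lz_j\bz_j$ with each $\bz_j$ an $H_\oz(\rn)$-molecule adapted to $B_j$; the molecular characterization of $H_\oz(\rn)$ then gives $\|\nz L^{-1/2}f\|_{H_\oz(\rn)}\ls\Lambda(\{\lz_j\bz_j\}_j)\ls\|f\|_{H_{\oz,L}(\rn)}$, and a density argument extends this to all of $H_{\oz,L}(\rn)$. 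The main obstacle is the bookkeeping in the size estimate—matching the operator-theoretic decay of $\nz L^{-1/2}\az$ to the precise normalization in the definition of $H_\oz(\rn)$-molecules—together with the minor but genuine verification of the vanishing moment; the remaining steps run parallel to the proof of Theorem \ref{t6.2} and to \cite[Theorem 7.4]{jy}.
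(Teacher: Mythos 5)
Your proposal is correct and follows essentially the route the paper intends: the paper omits the proof, referring to \cite[Theorem 7.4]{jy} and \cite{se96,v}, and that argument is precisely yours — show $\nz L^{-1/2}$ sends each $(\oz,M)$-atom (with $M$ large) to a fixed multiple of a classical $H_\oz(\rn)$-molecule, using the Davies-Gaffney estimates for $t\nz(t^2L)^Me^{-t^2L}$ for the annular size bounds, the gradient structure plus $L^{-1/2}\az\in L^1(\rn)$ for the single vanishing moment (which is all that is needed since $p_\oz>\frac n{n+1}$), and then sum over the atomic decomposition from Proposition \ref{p4.3} via the molecular characterization of $H_\oz(\rn)$. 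The bookkeeping you outline (decay exponent $2M-\frac n2-n(1/p_\oz-1)$ after renormalizing to $2^jB$, and the truncation argument for the cancellation) matches the details of that omitted proof.
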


We next characterize the Orlicz-Hardy space $H_{\oz,L}(\rn)$ via
maximal functions. To this end, we first introduce some notions.

Let $\nu>0$. Recall that for all measurable function $g$ on $\rnz$ and
$x\in\rn$, the Lusin area function $\ca_\nu (g)(x)$ is defined by
$\ca_\nu(g)(x)\equiv(\int_{\Gamma_\nu(x)}|g(y,t)|^2
\frac{\,dy\,dt}{t^{n+1}})^{1/2}.$
Also the non-tangential maximal function is defined by
$\nn^\nu (g)(x)\equiv \sup_{(y,t)\in \Gamma_\nu (x)}|g(y,t)|.$

\begin{lem}\label{l6.2}
Let $\eta,\,\nu\in(0,\fz)$. Then there exists a positive constant
$C$, depending on $\eta$ and $\nu$, such that for all measurable
function $g$ on $\rr^{n+1}_+$,
\begin{equation}\label{6.2}
C^{-1}\int_{\rn} \oz(\ca_\eta (g)(x))\,dx\le \int_{\rn} \oz(\ca_\nu
(g)(x))\,dx\le C\int_{\rn} \oz(\ca_\eta (g)(x))\,dx
\end{equation}
and
\begin{equation}\label{6.3}
C^{-1}\int_{\rn} \oz(\nn^\eta (g)(x))\,dx\le \int_{\rn} \oz(\nn^\nu
(g)(x))\,dx\le C\int_{\rn} \oz(\nn^\eta (g)(x))\,dx.
\end{equation}
\end{lem}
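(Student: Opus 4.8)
The plan is to reduce both \eqref{6.2} and \eqref{6.3} to a uniform control of distribution functions and then to treat the area function and the non-tangential maximal function by parallel, but not identical, arguments. Without loss of generality I may assume $\eta\le\nu$ (otherwise interchange the apertures). Since $\Gamma_\eta(x)\subset\Gamma_\nu(x)$, we have $\ca_\eta(g)\le\ca_\nu(g)$ and $\nn^\eta(g)\le\nn^\nu(g)$ pointwise, so by the monotonicity of $\oz$ the left-hand inequalities in \eqref{6.2} and \eqref{6.3} hold with $C=1$, and it remains to prove $\int_{\rn}\oz(\ca_\nu(g))\ls\int_{\rn}\oz(\ca_\eta(g))$ and $\int_{\rn}\oz(\nn^\nu(g))\ls\int_{\rn}\oz(\nn^\eta(g))$. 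The common device is the layer-cake identity $\int_{\rn}\oz(F(x))\,dx=\int_0^\fz|\{x\in\rn:\,F(x)>\lz\}|\,d\oz(\lz)$, valid because $\oz$ is continuous and increasing with $\oz(0)=0$; thus it suffices to bound the distribution function of $\ca_\nu(g)$ (resp. $\nn^\nu(g)$) by that of $\ca_\eta(g)$ (resp. $\nn^\eta(g)$) and to integrate against $d\oz$.

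For the non-tangential maximal function this control is a clean same-level domination. Let $\cm$ be the Hardy--Littlewood maximal operator on $\rn$ and $E_\lz\equiv\{x:\,\nn^\eta(g)(x)>\lz\}$. I would show $\{x:\,\nn^\nu(g)(x)>\lz\}\subset\{x:\,\cm(\chi_{E_\lz})(x)\ge\dz\}$ with $\dz\equiv(\eta/(2\nu))^n$: if $(y,t)\in\Gamma_\nu(x)$ satisfies $|g(y,t)|>\lz$, then $B(y,\eta t)\subset E_\lz$ while $B(y,\eta t)\subset B(x,2\nu t)$, whence $\cm(\chi_{E_\lz})(x)\ge|B(y,\eta t)|/|B(x,2\nu t)|=\dz$. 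The weak type $(1,1)$ bound for $\cm$ then gives $|\{\nn^\nu(g)>\lz\}|\ls|E_\lz|$ uniformly in $\lz$, and integrating against $d\oz$ yields \eqref{6.3}.

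For the area function no such pointwise comparison near $x$ is available, and I would instead derive a good-$\lambda$ inequality in the spirit of Coifman--Meyer--Stein. Fix $\lz>0$, put $O_\lz\equiv\{\ca_\eta(g)>\lz\}$, an open set, and $O_\lz^{\ast}\equiv\{\cm(\chi_{O_\lz})>\gz\}$ with $\gz\in(0,1)$ chosen small, depending only on $n$ and $\nu/\eta$, so that $\gz((\eta+\nu)/\eta)^n\le\frac12$. Combining the Fubini identity $\int_S[\ca_\eta(g)(x)]^2\,dx=\int_{\rnz}|g(y,t)|^2\,|B(y,\eta t)\cap S|\,t^{-n-1}\,dy\,dt$ with the geometric estimate $|B(y,\nu t)\setminus O_\lz^{\ast}|\ls|B(y,\eta t)\cap(\rn\setminus O_\lz)|$ for every $(y,t)$ (the left side vanishes unless some point of $B(y,\nu t)$ lies off $O_\lz^{\ast}$, in which case the density estimate at that point forces $B(y,\eta t)$ to be at least half-covered by the complement of $O_\lz$), together with Chebyshev's inequality and the weak $(1,1)$ bound for $\cm$, I obtain
$$|\{\ca_\nu(g)>\lz\}|\ls|\{\ca_\eta(g)>\lz\}|+\frac{1}{\lz^2}\int_{\{\ca_\eta(g)\le\lz\}}[\ca_\eta(g)(x)]^2\,dx.$$
Integrating against $d\oz(\lz)$, the first term contributes $\int_{\rn}\oz(\ca_\eta(g))$ directly; for the second I would write $\int_{\{\ca_\eta(g)\le\lz\}}[\ca_\eta(g)]^2=\int_0^\lz 2s\,|\{\ca_\eta(g)>s\}|\,ds$, interchange the order of integration, and use the upper type $1$ of $\oz$ to estimate $\int_s^\fz\lz^{-2}\,d\oz(\lz)\ls s^{-2}\oz(s)$, reducing that term to a multiple of $\int_0^\fz s^{-1}\oz(s)\,|\{\ca_\eta(g)>s\}|\,ds=\int_{\rn}\wz\oz(\ca_\eta(g)(x))\,dx$, which is comparable to $\int_{\rn}\oz(\ca_\eta(g)(x))\,dx$ by the equivalence of $\oz$ and $\wz\oz$ recalled in Section \ref{s2.3}. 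Adding the two contributions gives \eqref{6.2}.

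The main obstacle is precisely this area-function step: the replacement of the crude pointwise comparison (which works only for the maximal function) by a good-$\lambda$ inequality, and the subsequent passage from that inequality to the Orlicz integral through the Lebesgue--Stieltjes measure $d\oz$, where the concavity and upper-type-$1$ structure of $\oz$ — equivalently, the freedom to replace $\oz$ by $\wz\oz$ — is what makes the estimate of $\int_s^\fz\lz^{-2}\,d\oz(\lz)$ and the final identification go through.
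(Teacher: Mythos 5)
Your proof is correct, and it is essentially the approach behind the paper's (omitted) proof: the paper simply cites \cite{jy} (Lemma 3.2 there) for \eqref{6.2} and the Calder\'on--Torchinsky argument \cite{ct} together with \cite{jy} for \eqref{6.3}, and those proofs run exactly as yours --- a Coifman--Meyer--Stein type good-$\lambda$ inequality via the Fubini identity and the density estimate on $\lf\{\cm(\chi_{O_\lz})\le\gz\r\}$ for the area function, the $\cm(\chi_{E_\lz})\ge\dz$ covering trick for the non-tangential maximal function, and then integration of the distribution functions against $d\oz$ using the upper type $1$ of $\oz$ (equivalently $\oz\sim\wz\oz$). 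So your write-up is a correct, self-contained version of the same argument.
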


\begin{proof} \eqref{6.2} was established in \cite[Lemma 3.2]{jy},
while \eqref{6.3} can be proved by an argument similar to those used
in the proofs of \cite[Theorem 2.3]{ct} and \cite[Lemma 5.3]{jy}. We omit the
details, which completes the proof of Lemma \ref{l6.2}.
\end{proof}

For any $\bz\in(0,\fz)$, $f\in L^2(\rn)$ and $x\in\rn$, let
$\nn_h^\bz(f)(x)\equiv \nn^\bz(e^{-t^2L}f)(x)$,
$\nn_P^\bz (f)(x)\equiv\nn^\bz(e^{-t\sqrt L}f)(x),$
$\car_h(f)(x)\equiv\sup_{t>0}|e^{-t^2 L}f(x)|$ and
$$\car_P(f)(x)\equiv\sup_{t>0}|e^{-t \sqrt L}f(x)|.$$ We denote
$\nn_h^1(f)$ and $\nn_P^1(f)$ simply by $\nn_h(f)$ and $\nn_P(f)$,
respectively.

Similarly to Definition \ref{d4.1}, we introduce the space $H_{\oz,
\nn_h}(\rn)$ as follows.

\begin{defn}
Let $\oz$ be as in Assumption (C) and $\overline{\car(L)}$ as in
\eqref{4.2}. A function $f\in \overline{\car(L)}$ is said to be in
$\wz H_{\oz,\nn_h}(\rn)$ if $\nn_h(f)\in L(\oz)$; moreover, define
$$\|f\|_{H_{\oz,\nn_h}(\rn)}\equiv \|\nn_h(f)\|_{L(\oz)}=
\inf\lf\{\lz>0:\, \int_\rn \oz\lf(\frac{\nn_h(f)(x)}{\lz}\r)\,dx\le
1\r\}.$$ The Hardy space $H_{\oz,\nn_h}(\rn)$ is defined to be the
completion of $\wz H_{\oz,\nn_h}(\rn)$ in the norm
$\|\cdot\|_{H_{\oz,\nn_h}(\rn)}.$
\end{defn}

The spaces $H_{\oz,\nn_P}(\rn)$, $H_{\oz,\car_h}(\rn)$ and
$H_{\oz,\car_P}(\rn)$ are defined in a similar way.

The following Moser type local boundedness estimate was established
in \cite{hlmmy}.

\begin{lem}\label{l6.3}
Let $u$ be a weak solution of $\wz L u\equiv Lu-\pa_{t}^2u =0$ in the ball
$B(Y_0,2r)\subset \rnz$. Then for all $p\in (0,\fz)$,
there exists a positive constant $C(n,p)$ such that
$$\sup_{Y\in B(Y_0,r)} |u(Y)|\le C(n,p)
\lf(\frac{1}{r^{n+1}}\int_{B(Y_0,2r)} |u(Y)|^p\,dY\r)^{1/p}.$$
\end{lem}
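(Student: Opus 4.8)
The plan is to run a Moser iteration tailored to the operator $\wz L=-\Delta_{(x,t)}+V$ on $\rnz$, using only the nonnegativity of $V$ (and never any reverse Hölder or pointwise bound on $V$). Here $\nz$ denotes the full $(n+1)$-dimensional gradient, and a weak solution $u$ of $\wz Lu=0$ in $B(Y_0,2r)$ satisfies $u\in W^{1,2}_{\loc}$, $V|u|^2\in L^1_{\loc}$, and $\int\nz u\cdot\nz\vz+\int Vu\vz=0$ for all $\vz\in W^{1,2}$ supported in $B(Y_0,2r)$. The first step is a Caccioppoli-type estimate: for $\bz\ge1$ and a cutoff $\eta$, testing (after truncating $|u|$ and passing to the limit) against $\vz=\eta^2|u|^{2(\bz-1)}u$ gives
\begin{equation*}
\int\eta^2|u|^{2(\bz-1)}|\nz u|^2+\int V\eta^2|u|^{2\bz}\ls\bz\int|\eta|\,|\nz\eta|\,|u|^{2\bz-1}|\nz u|.
\end{equation*}
Since $V\ge0$, the potential term sits on the coercive side and is simply discarded; absorbing the right-hand side by Young's inequality yields $\int|\nz(\eta|u|^\bz)|^2\ls\bz^2\int|\nz\eta|^2|u|^{2\bz}$.

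Next I would combine this with the Sobolev embedding $W^{1,2}(\rr^{n+1})\hookrightarrow L^{2\chi}$, where $\chi\equiv(n+1)/(n-1)>1$ when $n\ge2$ and $\chi$ is any fixed number $>1$ when $n=1$. Choosing $\eta$ equal to $1$ on $B(Y_0,r_1)$, supported in $B(Y_0,r_2)$ with $|\nz\eta|\ls(r_2-r_1)^{-1}$, one gets, for all $r\le r_1<r_2\le2r$,
\begin{equation*}
\lf(\int_{B(Y_0,r_1)}|u|^{2\bz\chi}\r)^{1/\chi}\ls\frac{\bz^2}{(r_2-r_1)^2}\int_{B(Y_0,r_2)}|u|^{2\bz}.
\end{equation*}
Iterating along $\bz_k=\chi^k$ with radii decreasing from $2r$ to $r$ (e.g.\ $r_k=r(1+2^{-k})$) and keeping track of the constants through the convergent series $\sum_k k\chi^{-k}$, we arrive at the scale-invariant bound $\sup_{B(Y_0,r)}|u|\ls\big(r^{-(n+1)}\int_{B(Y_0,2r)}|u|^2\big)^{1/2}$, which is the asserted inequality for $p=2$.

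To obtain every $p\in(0,\fz)$: for $p\ge2$ it follows from the $p=2$ case by Hölder's inequality, so assume $0<p<2$. Running the estimate between intermediate radii and then interpolating, for $r\le r_1<r_2\le2r$,
\begin{equation*}
\sup_{B(Y_0,r_1)}|u|\ls\frac{1}{(r_2-r_1)^{(n+1)/2}}\lf(\int_{B(Y_0,2r)}|u|^p\r)^{1/2}\lf(\sup_{B(Y_0,r_2)}|u|\r)^{1-p/2}.
\end{equation*}
Applying Young's inequality to split off a term $\tfrac12\sup_{B(Y_0,r_2)}|u|$ and invoking the standard iteration lemma (if $\Phi(r_1)\le\tfrac12\Phi(r_2)+A(r_2-r_1)^{-\gz}$ for all $r\le r_1<r_2\le2r$, then $\Phi(r)\ls Ar^{-\gz}$), applied with $\Phi(s)\equiv\sup_{B(Y_0,s)}|u|$ (which is finite for $s<2r$ by the $p=2$ case) finishes the proof.

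The only real obstacle is the extremely weak regularity of $V$: it cannot be treated as a lower-order perturbation in any classical manner, so the De Giorgi–Nash–Moser machinery does not apply off the shelf. The resolution is structural rather than technical — because $V\ge0$, the potential contribution to the Caccioppoli inequality is a nonnegative quantity on the good side and drops out, after which the argument proceeds exactly as in the classical case $V\equiv0$. The remaining points (admissibility of the test functions $\eta^2|u|^{2(\bz-1)}u$, obtained by truncating $|u|$ at level $\ell$, deriving $\ell$-uniform estimates, and letting $\ell\to\fz$ by monotone convergence; and the elementary iteration lemma) are routine.
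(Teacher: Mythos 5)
The paper itself offers no proof of this lemma: it is quoted verbatim from \cite{hlmmy} and used as a black box in the proof of Theorem \ref{t6.4}. Your Moser-iteration argument is correct and is, in substance, the standard proof underlying the cited result: the only structural input beyond the classical harmonic case is that $V\ge 0$, so that after testing with $\eta^2|u|^{2(\bz-1)}u$ the potential term $\int V\eta^2|u|^{2\bz}$ sits on the coercive side and may be discarded, and your Caccioppoli inequality, the Sobolev step, the iteration along $\bz_k=\chi^k$, and the passage from $p=2$ to $p\in(0,2)$ via the interpolation $\|u\|_{L^2(B_{r_2})}^2\le(\sup_{B_{r_2}}|u|)^{2-p}\|u\|_{L^p(B_{r_2})}^p$, Young's inequality and the standard absorption lemma are all carried out correctly. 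Two small points deserve one sentence each if this were written out in full. First, in the case $n=1$ the ambient dimension is $2$ and the inequality $\|f\|_{L^{2\chi}}\ls\|\nz f\|_{L^2}$ is not scale invariant; the cleanest fix is to note that the asserted estimate is itself scale invariant (rescaling $(x,t)\mapsto(rx,rt)$ turns $V$ into another nonnegative potential), so one may assume $r=1$ and then use any fixed subcritical embedding, or use the Gagliardo--Nirenberg form with the $L^2$ term retained. Second, the admissibility of the test functions requires that the weak-solution class include $u\in W^{1,2}_{\loc}$ with $V|u|^2\in L^1_{\loc}$, as you assume; this is satisfied by the functions $u(x,t)=e^{-t\sqrt L}f(x)$, $f\in L^2(\rn)$, to which the lemma is applied in Theorem \ref{t6.4}, and your truncation-at-level-$\ell$ scheme with monotone convergence, together with the observation that $\sup_{B(Y_0,s)}|u|<\fz$ for $s<2r$ (needed for the absorption lemma) follows from the $p=2$ case, closes these gaps. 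So your write-up is a legitimate self-contained substitute for the citation.
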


To establish the maximal function characterizations
of $H_{\oz,L}(\rn)$, an extra assumption on $\oz$ is needed.

\begin{proof}[\bf Assumption (D)]
Let $\oz$ satisfy Assumption (C). Suppose that there exist
$q_1,\,q_2\in (0,\fz)$ such that $q_1<1<q_2$ and
$[\oz(t^{q_2})]^{q_1}$ is a convex function on $(0,\fz)$.
\end{proof}

Notice that if $\oz(t)=t^p$ with $p\in (0, 1]$ for all $t\in
(0,\fz)$, then for all $q_1\in (0,1)$ and $q_2\in [1/(pq_1),\fz)$,
$[\oz(t^{q_2})]^{q_1}$ is a convex function on $(0,\fz)$; if
$\oz(t)=t^{1/2}\ln(e^4+t)$ for all $t\in (0,\fz)$, then it is easy
to check that $[\oz(t^4)]^{1/2}$ is a convex function on $(0,\fz)$.

\begin{thm}\label{t6.4}
Let $\oz$ be as in Assumption (D). Then the spaces $H_{\oz,L}(\rn)$,
$H_{\oz,\nn_h}(\rn)$, $H_{\oz,\nn_P}(\rn)$, $H_{\oz,\car_h}(\rn)$ and
$H_{\oz,\car_P}(\rn)$ coincide with equivalent norms.
\end{thm}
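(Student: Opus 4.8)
The plan is to establish the chain of inclusions
\[
H_{\oz,L}(\rn)\subset H_{\oz,\car_h}(\rn)\subset H_{\oz,\nn_h}(\rn)\subset H_{\oz,L}(\rn)
\]
together with the parallel statements for the Poisson semigroup, thereby identifying all five spaces. First I would show $H_{\oz,L}(\rn)\subset H_{\oz,\nn_h}(\rn)$ (and the Poisson analogue) via Lemma \ref{l5.2}: it suffices to verify \eqref{5.6} with $T=\nn_h$, for which one uses the $L^2$-boundedness of the heat maximal operator (immediate from \eqref{6.1}) and, for each $(\oz,M)$-atom $\az=L^Mb$, an annular decomposition $\sum_k\int_{U_k(B)}\oz(|\lz|\nn_h(\az))$. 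On $U_k(B)$ with $k\ge 3$ one writes $e^{-s^2L}\az=s^{-2M}(s^2L)^Me^{-s^2L}b$ and invokes the Davies-Gaffney estimate from Lemma \ref{l2.1} to gain the factor $2^{-2Mk}$, exactly as in the proof of Theorem \ref{t5.2}; the concavity of $\oz$ and its strictly lower type $p_\oz$, with $2Mp_\oz>n(1-p_\oz/2)$, make the $k$-sum converge. This part does not use Assumption (D).

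Next I would handle the trivial inclusions $H_{\oz,\car_h}(\rn)\subset H_{\oz,\nn_h}(\rn)$ and $H_{\oz,\car_P}(\rn)\subset H_{\oz,\nn_P}(\rn)$, which follow from $\car_h(f)\le \nn_h(f)$ pointwise (the vertical maximal function is dominated by the non-tangential one), hence $\|\car_h(f)\|_{L(\oz)}\le\|\nn_h(f)\|_{L(\oz)}$; combined with the previous paragraph this already gives $H_{\oz,L}(\rn)\subset H_{\oz,\car_h}(\rn)$ once we close the loop. To compare heat and Poisson objects, I would use the subordination formula \eqref{5.5} together with Lemma \ref{l6.2} (the aperture-independence of the Orlicz quasi-norms of $\ca_\nu$ and $\nn^\nu$), which lets one freely switch apertures; a Minkowski-integral argument as in Lemma \ref{l5.1} then transfers $L^2$-bounds and Davies-Gaffney-type bounds between $e^{-t^2L}$ and $e^{-t\sqrt L}$, yielding $H_{\oz,\car_P}(\rn)\hookrightarrow H_{\oz,\car_h}(\rn)$ and back, and similarly for the $\nn$ spaces.

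The crucial and hardest step is the reverse inclusion $H_{\oz,\nn_h}(\rn)\subset H_{\oz,L}(\rn)$ (equivalently controlling $\cs_L(f)$, hence $\|f\|_{H_{\oz,L}}$, by $\|\nn_h(f)\|_{L(\oz)}$), and this is where Assumption (D) enters. The standard route is a good-$\lambda$ / Calder\'on--Zygmund decomposition at height $\lambda$ on the level set $\{x:\nn_h(f)(x)>\lambda\}$, combined with the Moser-type local boundedness estimate in Lemma \ref{l6.3} applied to the caloric function $u(x,t)=e^{-t^2L}f(x)$ (a weak solution of $\wz Lu=0$): Lemma \ref{l6.3} with exponent $p<2$ converts pointwise bounds on $|t^2Le^{-t^2L}f|$, via the identity $t^2Le^{-t^2L}f=-\tfrac{t}{2}\pa_t(e^{-t^2L}f)$ and interior gradient estimates for $\wz L$, into averages of $\nn_h(f)$, giving the pointwise comparison $\cs_L(f)\ls \nn_h(f)$ in an appropriate weak sense after integrating. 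The role of Assumption (D) is precisely to pass from an $L^2$- (or weak-$L^2$-) level good-$\lambda$ inequality to an estimate in the Orlicz quasi-norm: writing $\oz(t^{q_2})=[\,\cdot\,]^{1/q_1}$ of a convex function and choosing $q_2$ with $q_2 p_\oz>1$ and $q_1$ with $q_1<1$, one interpolates/rescales the $L^2$-inequality to conclude $\int_\rn\oz(\cs_L(f))\ls\int_\rn\oz(\nn_h(f))$. I expect the main obstacle to be making this last rescaling rigorous — in particular verifying that the Moser estimate of Lemma \ref{l6.3} is applicable uniformly (since $L=-\Delta+V$ with only $V\in L^1_{\loc}$, one must stay within the framework where $e^{-t^2L}f$ is a genuine local weak solution of $\wz Lu=0$) and that the good-$\lambda$ constants are compatible with the convexity exponents $q_1,q_2$ supplied by Assumption (D). Once $\cs_L(f)\ls\nn_h(f)$ in $L(\oz)$ is established for $f\in H_{\oz,\nn_h}(\rn)\cap L^2(\rn)$, a density argument (Remark \ref{r4.1}(iii) and the completeness of both spaces) completes the proof, and the Poisson versions follow by the same subordination comparison used above.
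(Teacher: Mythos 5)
There are two genuine gaps. First, your ``trivial'' inclusions go the wrong way: from the pointwise bound $\car_h(f)\le \nn_h(f)$ and $\|\car_h(f)\|_{L(\oz)}\le\|\nn_h(f)\|_{L(\oz)}$ you get $H_{\oz,\nn_h}(\rn)\subset H_{\oz,\car_h}(\rn)$, \emph{not} $H_{\oz,\car_h}(\rn)\subset H_{\oz,\nn_h}(\rn)$. The passage from the radial maximal function back to the non-tangential one is precisely the nontrivial step where Assumption (D) is needed; in the paper it is carried out for the Poisson semigroup ($H_{\oz,\car_P}(\rn)\subset H_{\oz,\nn_P}(\rn)$) by applying the Moser estimate of Lemma \ref{l6.3} with exponent $1/q_2$ to $u(x,t)=e^{-t\sqrt L}f(x)$, then using the convexity of $[\oz(t^{q_2})]^{q_1}$ and the Jensen inequality to get $\oz(\nn_P^{1/4}(f))\ls[\cm([\oz(\car_P(f))]^{q_1})]^{1/q_1}$, the $L^{1/q_1}(\rn)$-boundedness of $\cm$, and the aperture change \eqref{6.3}. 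With the corrected direction your chain of inclusions no longer closes, so the proposal as written does not prove the theorem.

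Second, your key step $H_{\oz,\nn_h}(\rn)\subset H_{\oz,L}(\rn)$ rests on applying Lemma \ref{l6.3} to the heat extension $u(x,t)=e^{-t^2L}f(x)$, calling it a weak solution of $\wz Lu\equiv Lu-\pa_t^2u=0$; it is not --- only the Poisson extension $e^{-t\sqrt L}f$ solves that equation, so the Moser/good-$\lambda$ route you sketch for $\cs_L(f)\ls\nn_h(f)$ is unsupported (you yourself flag this as the main obstacle). The paper instead closes the loop through the Poisson objects: $H_{\oz,L}\subset H_{\oz,\nn_h}\subset H_{\oz,\car_h}\subset H_{\oz,\car_P}\subset H_{\oz,\nn_P}\subset H_{\oz,\cs_P}=H_{\oz,L}$, where the subordination formula \eqref{5.5} gives only the one-sided bound $\car_P(f)\ls\car_h(f)$ (your claimed converse transfer from Poisson to heat by subordination is also not available), and the last inclusion uses the distributional inequality \eqref{6.5} from \cite{hlmmy} comparing $\wz\cs_P^{1/2}$ with $\nn_P^\bz$, the type properties of $\oz$ together with \eqref{6.2}--\eqref{6.3}, and Theorem \ref{t6.1}; Assumption (D) plays no role there. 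A minor further point: in your first step, Davies--Gaffney $L^2$ off-diagonal bounds (Lemma \ref{l2.1}) cannot by themselves control the pointwise supremum in $\nn_h$ on annuli; the paper uses the pointwise Gaussian bounds \eqref{6.1} and the time-derivative kernel bounds \eqref{6.4}, which are available for this Schr\"odinger semigroup, to verify \eqref{5.6} for $T=\nn_h$.
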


\begin{proof}
We first show that $H_{\oz,L}(\rn)\subset H_{\oz,\nn_h}(\rn)$.
By \eqref{6.1}, for all $f\in L^2(\rn)$ and $x\in\rn$, we have
\begin{eqnarray*}
\nn_h(f)(x)&&\ls \sup_{y\in B(x,t)}t^{-n}\int_{\rn}
\exp\lf(-\frac{|y-z|^2}{4t^2}\r)|f(z)|\,dz\\
&&\ls \sum_{j=0}^\fz\sup_{y\in B(x,t)}t^{-n}\int_{U_j(B(y,2t))}
\exp\lf(-\frac{|y-z|^2}{4t^2}\r)|f(z)|\,dz\\
&&\ls \cm(f)(x)+ \sum_{j=2}^\fz\sup_{y\in B(x,t)}t^{-n}
2^{-j(n+1)}\int_{U_j(B(y,2t))}  |f(z)|\,dz\ls \cm(f)(x),
\end{eqnarray*}
where $\cm$ is the Hardy-Littlewood maximal function on $\rn$. Thus, $\nn_h$
is bounded on $L^2(\rn)$.

Thus, by Lemma \ref{l5.2} and the completeness of $H_{\oz,L}(\rn)$
and $H_{\oz,\nn_h}(\rn)$, similarly to the proof of Theorem
\ref{t5.2}, we only need to show that for each $(\oz,M)$-atom $\az$,
\eqref{5.6} holds with $T=\nn_h$, where $M\in\cn$ and $M>\frac
n2(\frac 1{p_\oz}-\frac12)$.

To this end, suppose that $\az$ is an $(\oz,M)$-atom and
$\supp \az\subset B\equiv B(x_B,r_B)$. For
$j=0,\,\cdots,\,10$, since $\nn_h$ is bounded on $L^2(\rn)$,
by the Jensen inequality and the H\"older inequality, we have
that for any $\lz\in\cc$,
$$\int_{U_j(B)}\oz\lf(\nn_h(\lz\az)(x)\r)\,dx\ls
|U_j(B)|\oz\lf(\frac{\|\lz\nn_h(\az)\|_{L^2(U_j(B))}}{|B|^{1/2}}\r)\ls
|B|\oz\lf(\frac{|\lz|}{\ro(|B|)|B|}\r).$$

For $j\ge 11$ and $x\in U_j(B)$, let $a\in (0,1)$ such that
$ap_\oz(2M+n)>n$. Write
\begin{eqnarray*}
\nn_h (\az)(x)&&\le \sup_{y\in B(x,t),\,t\le 2^{aj-2}r_B} |e^{-t^2L}(\az)(y)|
+\sup_{y\in B(x,t),\,t> 2^{aj-2}r_B} |e^{-t^2L}(\az)(y)|
\equiv \mathrm{H}_j+\mathrm{I}_j.
\end{eqnarray*}
To estimate $\mathrm{H}_j$, observe that if $x\in U_j(B)$, then we
have $|x-x_B|> 2^{j-1}r_B$, and if $z\in B$ and $y\in F_j(B)\equiv
\{y\in\cx: \ |x-y|<2^{aj-2}r_B \ \mathrm {for \ some }\ x\in
U_j(B)\}$, then we have
$$|y-z|\ge |x-x_B|-|z-x_B|-|y-x|\ge 2^{j-1}r_B-r_B-2^{aj-2}r_B\ge 2^{j-3}r_B.$$
By \eqref{6.1}, we obtain
\begin{eqnarray*}
\mathrm{H}_j&&\ls\sup_{y\in B(x,t),\,t\le 2^{aj-2}r_B}
\frac{1}{t^n}\int_B e^{-\frac{|z-y|^2}{4t^2}}|\az(z)|\,dz\\
&&\ls \sup_{t\le 2^{aj-2}r_B}\frac{1}{t^n}
\lf(\frac{t}{2^{j}r_B}\r)^{N+n} \|\az\|_{L^1(B)}
\ls 2^{-j[n+(1-a)N]}|B|^{-1}[\ro(|B|)]^{-1},
\end{eqnarray*}
where $N\in \cn$ satisfies that $p_\oz[n+(1-a)N]>n$.

For the term $\mathrm{I}_j$, notice that since the kernel $h_t$ of
$\{e^{-t^2L}\}_{t>0}$ satisfies \eqref{6.1}, we have that for each
$k\in\cn$, there exist two positive constants $c_k$ and $\wz c_k$
such that for almost every $x,\,y\in\rn$,
\begin{equation}\label{6.4}
\lf|\frac{\pa^k}{\pa t^k}h_t(x,y)\r|\le \frac {\wz c_k}{t^{k+n/2}}
\exp\lf\{-\frac{|x-y|^2}{c_kt}\r\};
\end{equation}
see \cite{da,hlmmy}. On the other hand, since $\az$ is an $(\oz,M)$-atom,
by Definition \ref{d4.2}, we have $\az=L^Mb$ with $b$ as in Definition
\ref{d4.2}, which together with \eqref{6.4} implies that
\begin{eqnarray*}
\mathrm{I}_j&&=\sup_{y\in B(x,t),\,t> 2^{aj-2}r_B}t^{-2M} |(t^2L)^Me^{-t^2L}(b)(y)|\\
&&\ls \sup_{y\in B(x,t),\,t> 2^{aj-2}r_B}t^{-2M-n}
\int_B e^{-\frac{|z-y|^2}{c_Mt^2}}|b(z)|\,dz\ls 2^{-aj(2M+n)}|B|^{-1}[\ro(|B|)]^{-1}.
\end{eqnarray*}

Combining the above two estimates, we obtain
\begin{eqnarray*}
&&\sum_{j=11}^\fz\int_{U_j(B)}\oz(\nn_h(\lz\az)(x))\,dx\\
&&\hs\ls \sum_{j=11}^\fz|U_j(B)| \lf[2^{-jp_\oz[n+(1-a)N]}+2^{-jap_\oz[n+2M]}\r]
\oz\lf(\frac{|\lz|}{\ro(|B|)|B|}\r)\ls |B|\oz\lf(\frac{|\lz|}{\ro(|B|)|B|}\r).
\end{eqnarray*}
Thus, \eqref{5.6} holds with $T=\nn_h$, and hence
$H_{\oz,L}(\rn)\subset H_{\oz,\nn_h}(\rn)$.

From the fact that for all $f\in L^2(\rn)$, $\car_h(f)\le \nn_h(f)$,
it follows that for all $f\in H_{\oz,\nn_h}(\rn)\cap L^2(\rn)$,
$\|f\|_{H_{\oz,\car_h}(\rn)}\le \|f\|_{H_{\oz,\nn_h}(\rn)}$, which
together with a density argument implies that
$H_{\oz,\nn_h}(\rn) \subset H_{\oz,\car_h}(\rn)$.

To show that $H_{\oz,\car_h}(\rn)\subset H_{\oz,\car_P}(\rn)$, by
\eqref{5.5}, we have that for all $f\in L^2(\rn)$ and $x\in \rn$,
\begin{eqnarray*}
\car_P(f)(x)&&=\sup_{t>0}|e^{-t\sqrt L}f(x)|\ls \sup_{t>0}\int_0^\fz
\frac{e^{-u}}{\sqrt u} |e^{-\frac {t^2}{4u}L}f(x)|\,du\\
&&\ls \car_h(f)(x)\int_0^\fz \frac{e^{-u}}{\sqrt u}\,du\ls \car_h(f)(x),
\end{eqnarray*}
which implies that for all $f\in H_{\oz,\car_h}(\rn)\cap L^2(\rn)$,
$\|f\|_{H_{\oz,\car_P}(\rn)}\ls \|f\|_{H_{\oz,\car_h}(\rn)}$. Then
by a density argument, we obtain $H_{\oz,\car_h}(\rn) \subset
H_{\oz,\car_P}(\rn)$.

Let us now show that $H_{\oz,\car_P}(\rn)\subset
H_{\oz,\nn_P}(\rn)$. Since $\oz$ satisfies Assumption (D), there
exist $q_1,\,q_2\in (0,\fz)$ such that $q_1<1<q_2$ and
$[\oz(t^{q_2})]^{q_1}$ is a convex function on $(0,\fz)$.

For all $x\in \rn$, $t\in (0,\fz)$ and $f\in L^2(\rn)$, let
$u(x,t)\equiv e^{-t\sqrt L}f(x)$. Then $\wz Lu= Lu-\pa_{t}^2u=0$.
Applying Lemma \ref{l6.3} to such a $u$ with $1/q_2$, we obtain that
for all $y\in B(x,t/4)$,
\begin{eqnarray*}
|e^{-t\sqrt L}f(y)|^{1/q_2}&&\ls  \frac{1}{t^{n+1}}
\int_{t/2}^{3t/2}\int_{B(x,t/2)} |e^{-s\sqrt L}f(z)|^{1/q_2}\,dz\,ds
\ls \frac{1}{t^{n}}\int_{B(x,t)} [\car_P (f)(z)]^{1/q_2}\,dz.
\end{eqnarray*}
Since $[\oz(t^{q_2})]^{q_1}$ is convex on $(0,\fz)$, by the Jensen
inequality, we obtain
\begin{eqnarray*}
\lf[\oz(|e^{-t\sqrt L}f(y)|)\r]^{q_1}&&\ls\lf[\oz\lf(\lf(\frac{1}{t^{n}}\int_{B(x,t)}
[\car_P (f)(z)]^{1/q_2}\,dz\r)^{q_2}\r)\r]^{q_1}\\
&&\ls\frac{1}{t^{n}}\int_{B(x,t)}
\lf[\oz(\car_P (f)(z))\r]^{q_1}\,dz\ls \cm\lf([\oz(\car_P (f))]^{q_1}\r)(x),
\end{eqnarray*}
which together with the fact that $\oz$ is continuous
implies that for all $x\in\rn$,
$$\oz\lf(\nn_P^{1/4}(f)(x)\r)\ls
\lf[\cm\lf([\oz(\car_P (f))]^{q_1}\r)(x)\r]^{1/q_1}.$$

Now by \eqref{6.3} and the fact that $\cm$ is bounded on
$L^{1/q_1}(\rn)$, we obtain
\begin{eqnarray*}
\|\oz(\nn_P(f))\|_{L^1(\rn)}&&\ls\|\oz(\nn_P^{1/4}(f))\|_{L^1(\rn)}\\
&&\ls \|\lf[\cm\lf([\oz(\car_P (f))]^{q_1}\r)\r]^{1/q_1}\|_{L^1(\rn)}
\ls\|\oz(\car_P(f))\|_{L^1(\rn)},
\end{eqnarray*}
and hence $\|f\|_{H_{\oz, \nn_P}(\rn)}\ls \|f\|_{H_{\oz,
\car_P}(\rn)}$. Then by a density argument, we obtain that
$H_{\oz,\car_P}(\rn)\subset H_{\oz,\nn_P}(\rn)$.

Finally, let us show that
$H_{\oz,\nn_P}(\rn)\subset H_{\oz,L}(\rn)$.
For all $x\in \rn$, $\bz\in (0,\fz)$ and $f\in L^2(\rn)$, define
$\wz \cs_P^{\bz}f(x)\equiv(\iint_{\Gamma_\bz(x)}|t
\wz \nz e^{-t\sqrt L}f(y)|^2\frac{\,dy\,dt}{t^{n+1}})^{1/2},$
where $\wz \nz \equiv (\nz,\,\pa_t)$ and $|\wz
\nz|^2=|\nz|^2+(\pa_t)^2$. It is easy to see that $\cs_Pf\le \wz
\cs_P^{1}f$.

It was proved in the proof of \cite[Theorem 8.2]{hlmmy} that for all
$f\in L^2(\rn)$ and $u>0$,
\begin{equation}\label{6.5}
\sz_{\wz \cs_P^{1/2}f}(u)\ls
\frac{1}{u^2}\int_0^u t\sz_{\nn_P^\bz}(t)\,dt+\sz_{\nn_P^\bz}(u),
\end{equation}
where $\bz\in (0,\fz)$ is large enough,
 and $\sz_g$ denotes the distribution of the function $g$.

Since $\oz$ is of upper type 1 and lower type $p_\oz\in (0,1]$, we have
$\oz(t)\sim \int_0^t\frac{\oz(u)}{u}\,du$
for each $t\in (0,\fz)$, which together with \eqref{6.2},
\eqref{6.3}, \eqref{6.5} and $\cs_Pf\le \wz \cs_P^{1}f$, further
implies that
\begin{eqnarray*}
\int_{\rn}\oz(\cs_P(f)(x))\,dx&&\ls \int_{\rn}\oz(\wz\cs^{1}_P(f)(x))\,dx
\ls \int_{\rn}\oz(\wz\cs^{1/2}_P(f)(x))\,dx\\
&&\sim \int_\rn \int_0^{\wz\cs^{1/2}_P(f)(x)}\frac{\oz(t)}{t}\,dt\,dx
\sim \int_0^\fz \sz_{\wz\cs^{1/2}_P(f)}(t)\frac{\oz(t)}{t}\,dt\\
&& \ls \int_0^\fz \frac{\oz(t)}{t}
\lf[\frac{1}{t^2}\int_0^t u\sz_{\nn_P^\bz}(u)\,du+\sz_{\nn_P^\bz}(t)\r]\,dt\\
&&\ls \int_0^\fz  u\sz_{\nn_P^\bz}(u) \int_u^\fz\frac{\oz(t)}{t^3}\,dt\,du+
\int_\rn\oz(\nn_P^\bz(x))\,dx\\
&&\ls \int_\rn\oz(\nn_P^\bz(x))\,dx\ls \int_\rn\oz(\nn_P(x))\,dx.
\end{eqnarray*}
Thus, we obtain that $\|f\|_{H_{\oz,\cs_P}(\rn)}\ls \|f\|_{H_{\oz,\nn_P}(\rn)}$.
By Theorem \ref{t6.1}, we finally obtain that
$H_{\oz,\nn_P}(\rn)\subset H_{\oz,\cs_P}(\rn)=H_{\oz,L}(\rn)$,
which completes the proof of Theorem \ref{t6.4}.
\end{proof}

\begin{rem}\rm
(i) If $n=1$ and $p=1$, the Hardy space $H_L^1(\rn)$ coincides with
the Hardy space introduced by Czaja and Zienkiewicz in \cite{cz}.

(ii) If $L=-\Delta+V$ and $V$ belongs to the reverse H\"older class
$\ch_q(\rn)$ for some $q\ge n/2$ with $n\ge 3$, then the Hardy space
$H_L^p(\rn)$ when $p\in (n/(n+1),1]$ coincides with the Hardy space
introduced by Dziuba\'nski and Zienkiewicz \cite{dz1,dz2}.
\end{rem}

{\bf Acknowledgements.} Dachun Yang would like to thank Professor
Lixin Yan and Professor Pascal Auscher for some helpful discussions
on the subject of this paper. The authors sincerely wish to express their
deeply thanks to the referee for her/his very carefully reading and
also her/his so many careful, valuable and suggestive remarks which
essentially improve the presentation of this article.

\medskip

\noindent Renjin Jiang:

\noindent School of Mathematical Sciences, Beijing Normal University

\noindent Laboratory of Mathematics and Complex Systems, Ministry
of Education

\noindent 100875 Beijing

\noindent People's Republic of China

\noindent E-mail: \texttt{rj-jiang@mail.bnu.edu.cn}

\medskip

\noindent {\it Present address}

\noindent Department of Mathematics and Statistics,
University of Jyv\"{a}skyl\"{a}

\noindent P.O. Box 35 (MaD), 40014, Finland

\medskip
\noindent Dachun Yang (corresponding author)

\noindent School of Mathematical Sciences, Beijing Normal University

\noindent Laboratory of Mathematics and Complex Systems, Ministry
of Education

\noindent 100875 Beijing

\noindent People's Republic of China

\noindent E-mail: \texttt{dcyang@bnu.edu.cn}

\end{document}